\DeclareSymbolFont{lettersA}{U}{txmia}{m}{it}
 \DeclareMathSymbol{\Indi}{\mathord}{lettersA}{'211}
 \def\XXint#1#2#3{{\setbox0=\hbox{$#1{#2#3}{\int}$}
 \vcenter{\hbox{$#2#3$}}\kern-.5\wd0}}
\def\Swiech{{\accent"13S}wie{\hbox{\kern -0.21em\lower 0.79ex\hbox{$\textfont1=\scriptfont1\lhook$}}}ch}
\def\SWIECH{{\accent"13S}WIE{\hbox{\kern -0.26em\lower 0.77ex\hbox{$\textfont1=\scriptfont1\lhook$}}}CH}
\def\P{{\cal P}}
\def\C{{\cal C}}
\def\B{{\cal B}}
\def\to{\rightarrow}
\def\R{{\mathbb R}}
\def\S{{\mathbb S}}
\def\N{{\mathbb N}}
\def\l{\lambda}
\def\L{\Lambda}
\def\G{\Gamma}
\def\ti{\times}
\def\le{\left}
\def\ri{\right}
\def\O{\Omega}
\def\o{\omega}
\def\g{\gamma}
\def\fr{\frac}
\def\p{\partial}
\def\e{\varepsilon}
\def\d{\delta}
\def\s{\sigma}
\def\b{\beta}
\def\a{\alpha}
\def\hat{\widehat}
\def\ol{\overline}
\def\tilde{\widetilde}
\def\under{\underset}
\def\low{\underset}
\def\loc{\mathrm{loc}}
\def\dist{\mathrm{dist}}
\def\supp{\mathrm{supp}}
\def\dis{\displaystyle}
\newtheorem{thm}{Theorem}[section]
\newtheorem{dfn}[thm]{Definition}
\newtheorem{lem}[thm]{Lemma}
\newtheorem{prop}[thm]{Proposition}
\newtheorem{cor}[thm]{Corollary}
\newtheorem{rem}[thm]{Remark}
\title{Weak Harnack inequality for fully nonlinear uniformly parabolic equations with unbounded ingredients and applications}
\author{
\begin{tabular}{c}
\begin{tabular}{ccc}
Shigeaki Koike\footnote{%Supported in part by Grant-in-Aid for Scientific Research (No. 
%16H06339, %\UTF{00C2}\UTF{008F}\UTF{00C2}\UTF{00C2}\UTF{0081}\UTF{00C3}\UTF{008A}\UTF{00C2}\UTF{0089}\UTF{00C2}\UTF{0092}%
%16H03948, %\UTF{00C2}\UTF{0090}\UTF{00C3}\UTF{008E}\UTF{00C2}\UTF{0088}\UTF{00C3}\UTF{20AC}%
%16H03946%\UTF{00C2}\UTF{0090}\UTF{00C3}\UTF{0094}\UTF{00C2}\UTF{0096}\UTF{00C3}\UTF{0098}%
%) of JSPS, 
e-mail: koike@m.tohoku.ac.jp}&$\quad$&Andrzej {\accent"13S}wie{\hbox{\kern -0.21em\lower 0.79ex\hbox{$\textfont1=\scriptfont1\lhook$}}}ch\footnote{e-mail: swiech@math.gatech.edu}\\
Mathematical Institute&&School of Mathematics\\
Tohoku University&&Georgia Institute of Technology\\
Aoba, Sendai 980-8578&&Atlanta, GA 30332\\
JAPAN&&USA
\end{tabular}\\
and\\
Shota Tateyama
\footnote{%Supported by Grant-in-Aid for JSPS Research Fellow 16J02399, 
e-mail: shota.tateyama.p3@dc.tohoku.ac.jp}\\
Mathematical Institute\\
Tohoku University\\
Aoba, Sendai 980-8578\\
JAPAN
\end{tabular}
}
\date{}
\begin{document}

%\thispagestyle{empty}
%\setcounter{page}{0}
%\pagenumbering{roman}
\maketitle
%\tableofcontents
%\newpage
\pagenumbering{arabic}

\begin{abstract}
The weak Harnack inequality for $L^p$-viscosity supersolutions of 
fully nonlinear second-order uniformly parabolic partial 
differential equations 
with unbounded coefficients and inhomogeneous terms is proved. 
It is shown that H\"older continuity of $L^p$-viscosity solutions is derived from the weak Harnack inequality for $L^p$-viscosity supersolutions. 
The local maximum principle for $L^p$-viscosity 
subsolutions and the Harnack inequality for $L^p$-viscosity solutions are also obtained.
Several further remarks are presented when equations have superlinear growth 
in the first space derivatives. 

{\it 
\begin{center}
\begin{tabular}{ll}Keywords:&fully nonlinear parabolic equations, viscosity 
solutions, Harnack\\
&inequality, maximum principle.\\
2010 MSC:&49L25, 35D40, 35B65, 35K55, 35K20, 35K10.\\
&\\
\end{tabular}
\end{center}
}
\end{abstract}

\section{Introduction}%%%%%%%%%%%%%%%%%%%%%%SECTION#1
  \label{sec:intro}

The seminal paper \cite{Caf} of L.A. Caffarelli was the most influential in the development of modern regularity theory for viscosity solutions of fully nonlinear uniformly elliptic partial differential equations (PDE for short). Various results were proved there, including Harnack inequality, 
$C^\alpha, C^{1,\alpha}, C^{2,\alpha}$ and $W^{2,p}$ estimates, 
and the reader can find a more detailed and complete account of them in \cite{CafCab}. Around the same time similar results
like Harnack inequality, $C^\alpha$ and
$C^{1,\alpha}$ estimates for viscosity solutions were also proved by different methods in \cite{T2, T1, T3}.
%However, it was necessary to assume that inhomogeneous terms are continuous since the standard definition of viscosity solutions is not available when PDE have measurable ingredients. 
In order to treat PDE with measurable terms, the notion of $L^p$-viscosity solution of 
fully nonlinear uniformly elliptic PDE was introduced in \cite{CCKS} and a similar idea was also considered in \cite{W}. 
L. Wang in \cite{W, W1} extended regularity results of \cite{Caf} to viscosity solutions of fully nonlinear uniformly parabolic PDE. 
Later, $L^p$-viscosity solutions of parabolic PDE were studied in \cite{CFKS, CKS}. 

The main ingredient in the theory is the Aleksandrov-Bakelman-Pucci (ABP for short) maximum principle, which gives the 
$L^\infty$-estimates in terms of the $L^p$-norms of the inhomogeneous terms. The ABP maximum principle for viscosity solutions
of fully nonlinear uniformly parabolic PDE was proved in \cite{W}.
In \cite{CCKS}, the ABP maximum principle was proved for $L^p$-viscosity solutions of uniformly elliptic PDE which are uniformly Lipschitz continuous in the first derivatives. It was later extended for elliptic and parabolic PDE to equations which are not uniformly Lipschitz continuous in the first derivative terms in \cite{KS2}, where the Lipschitz 
coefficient functions (as functions of $x$ and $t$) belong to some $L^q$ 
spaces. 
The second ingredient of the regularity theory of \cite{Caf} is 
 the Harnack inequality for viscosity solutions as well as the weak Harnack inequality and the local maximum principle.
 Such results for non-divergence form equations started with the work of Krylov and Safonov \cite{KrS} and the results for
 strong solutions can be found in classical books \cite{GilTru83, L}. Results for viscosity solutions first appeared in
 \cite{Caf, T1} (see \cite{CafCab}). General form of the weak Harnack inequality for $L^p$-viscosity supersolutions of 
fully nonlinear elliptic PDE (which implies the H\"{o}lder continuity of 
$L^p$-viscosity solutions) was proved in \cite{KS3}, using the ABP estimates of \cite{KS2}, while a general 
local maximum principle for $L^p$-viscosity solutions can be found in \cite{KS5}. The corresponding results for 
viscosity solutions of uniformly parabolic PDE were proved in \cite{W}, however only for equations which are
uniformly Lipschitz continuous in the first derivatives.
In this paper we want to extend them to $L^p$-viscosity solutions of more general equations. The relevant equations are the
parabolic
extremal equations 
\[
u_t +\P^\pm(D^2u) \pm\mu |Du|+f=0 \quad\mbox{in } Q, 
\]
where $f \in L^p(Q)$ and $\mu\in L^q(Q)$. 

%In a classical result by Krylov and Safonov \cite{KrS} via a probabilistic  argument, the Harnack inequality for linear PDE of %non-divergent type was proved while  proofs utilized only PDE tools for strong solutions can be found in %the textbook by %Lieberman 
%\cite{GilTru83, L} for instance. 
%It is worth mentioning a paper by Rey \cite{Rey} for another approach but only to strong solutions. 

%For viscosity solutions of uniformly parabolic PDE, 
%Wang \cite{W} obtained regularity results 
%corresponding to those for elliptic PDE by Caffarelli \cite{Caf}. 
%All of these results deal with fully nonlinear uniformly parabolic PDE only with bounded coefficients to the first derivatives. 
%If we use the barrier functions of \cite{W} and  \cite{ImbSil13} 
%for parabolic PDE with unbounded coefficients, 
%then we have to suppose that the coefficients are small in some norms. 

In this manuscript, combining the argument from \cite{ImbSil13} with 
the ABP maximum principle of \cite{KS2}, we  first show 
the weak Harnack inequality when the $L^q$-norm of the coefficient function $\mu$ is small. 
We then avoid this smallness assumption by the introduction
of a new ``heat kernel'' like barrier 
function in our proof of the weak Harnack inequality. 
We will use global estimates on strong solutions of fully nonlinear parabolic equations from a recent paper by Dong, Krylov and Li \cite{DKX}.
We  remark that the weak Harnack inequality yields the (local) 
H\"older estimate. 
In order to establish the Harnack inequality, following the argument 
of \cite{CafCab} (see also \cite{KS5}), 
we also obtain the corresponding local maximum principle. 
We refer to \cite{W} and \cite{ImbSil13} for the other approach. 
We also present some results when the PDE contains first space derivative terms which may grow superlinearly.

This paper is organized as follows. In Section 2, we recall the definition of $L^p$-viscosity solution for parabolic PDE, 
its properties and known results. 
Section 3 is devoted to a proof of the weak Harnack inequality 
for $L^p$-viscosity supersolutions. 
In Section 4 we first
establish the local H\"older continuity estimate using the weak Harnack inequality.  
For the completeness of the theory, 
we show the local maximum principle for $L^p$-viscosity subsolutions 
by a parabolic version of the argument of \cite{CafCab} and then obtain the Harnack inequality. 
In Section 5, we present some results for PDE which may contain superlinearly growing gradient terms.

%\pagenumbering{arabic}

%\begin{center}Acknowledgement\end{center}
%The authors would like to thank Professor G. Akagi for letting us know a result on compact embeddings in \cite{S}.
%\end{acknowledgement}

\section{Preliminaries}%%%%%%%%%%%%%%%%%%%%%SECTION#2

%\subsection{Definitions}
We fix $n\in\N$, a bounded domain $\O\subset\R^n$, and $T>0$. 
%We use $\langle\cdot,\cdot\rangle$ for the inner product in $\R^n$.  
We denote by $\S^n$ the set of all $n\ti n$ symmetric matrices 
with the standard order.

Given $F\colon\O\ti(0,T]\ti\R^n\ti\S^n\to\R$, we are concerned with the following fully nonlinear parabolic PDE: 
\begin{equation}
   \label{eq:dfn}
    u_t+F(x, t, Du, D^2u)=0 \quad\mbox{in }\O\ti(0,T], 
\end{equation}
where $Du$ and $D^2u$, respectively, denote 
the first and second derivatives with respect to $x\in \R^n$, 
$u_t$ is the time derivative, and $F$ is at least measurable 
with respect to all the variables. 
We will write $u_{x_k}, u_{x_kx_\ell}$ for $\fr{\p u}{\p x_k}, \fr{\p^2 u}{\p x_\ell\p x_k}$, respectively. 

In what follows, we assume that $F$ is uniformly parabolic, i.e. that
there exist $0<\l\leq\L<\infty$ such that 
\begin{equation}\label{UP}
\P^-_{\l, \L}(X-Y)\leq F(x,t,\xi,X)- F(x,t,\xi,Y)\leq \P^+_{\l, \L}(X-Y)
\end{equation}
for all $(x, t, \xi, X, Y)\in\O\ti(0,T]\ti\R^n\ti\S^n\ti\S^n$, where $\P^\pm_{\l, \L}\colon\S^n\to\R$ are defined by
\[\P^+_{\l, \L}(X):=\max\{ -\mathrm{Tr}(AX) \ | \ A\in\S^n, \l I\leq A\leq \L I\},\]
\[\P^-_{\l, \L}(X):=\min\{ -\mathrm{Tr}(AX) \ | \ A\in\S^n, \l I\leq A\leq \L I\}\]
for $X\in\S^n$, where $I$ denotes the $n\ti n$ identity matrix. 
Since  we fix $0<\l\leq \L$ in this paper, 
we simply write $\P^\pm$ for $\P^\pm_{\l. \L}$. 
For properties of $\P^\pm$, we refer for instance to \cite{CCKS}. 
 
Setting $Q:=\O \ti (0,T]$, we denote the parabolic boundary of $Q$ by
\[
\p_p Q:=\O\ti \{0\}\bigcup\p\O\ti [0,T).\]
The parabolic distance is defined by
\[
d((x,t),(y,s)):=\sqrt{|x-y|^2+|t-s|}.
\]
For $U$, $V\subset\R^{n+1}$, we define the distance between $U$ and $V$
 \[\dist(U,V):=\inf\le\{d((x,t),(y,s)) \,|\, (x, t)\in U, (y, s)\in V \ri\}.\]
We will write ${\rm diam}(Q)$ for the diameter of $Q$ (measured with respect to the parabolic distance) and ${\rm diam}(\O)$
for the diameter of $\O$.

We will use the anisotropic Sobolev spaces. For $1\leq p\leq\infty$, 
\[
W^{2,1}_p(Q):=\le\{ f\in L^p(Q) \ \le| \ f_{x_k},f_{x_k x_\ell}, f_t\in L^p(Q) \ri. \ (1\leq k,\ell\leq n)\ri\}, 
\]
and
\[W^{2,1}_{p,\loc}(Q):=\le\{ f\in W^{2,1}_p(Q') \ | \ \forall Q'\Subset Q \ri\}.\]
Here and later, $Q'\Subset Q$ means dist$(Q',\p_pQ)>0$. 
We define the norm for  $f\in W^{2,1}_p(Q)$ by 
\[\|f\|_{W^{2,1}_p(Q)}:=\|f\|_{L^p(Q)}+\|f_t\|_{L^p(Q)}+\sum_{k=1}^n\|
         f_{x_k}\|_{L^p(Q)}+\sum_{k,\ell=1}^n\|
         f_{ x_k x_\ell}\|_{L^p(Q)}.\]
We will also use the anisotropic Sobolev spaces 
\[W^{1,0}_p(Q):=
\left\{ 
f\in L^p(Q) \ \le| \ f_{x_k}\in L^p(Q) \ri. \ (1\leq k\leq n)\ri\}\]
for $1\leq p\leq \infty$, equipped with the norm
\[\|f\|_{W^{1,0}_p(Q)}:=\|f\|_{L^p(Q)}+\sum_{k=1}^n\|
         f_{x_k}\|_{L^p(Q)}.\]

     We denote by $C^{2,1}(Q)$ the space of functions $%\phi
     u\in C(Q)$ such that 
$u_t,u_{x_k},u_{x_kx_\ell}\in C(Q)$ for $1\leq k,\ell\leq n$. 
For $0<\alpha\leq 1$ we denote by $C^\alpha(Q)$ the space of functions which are $\alpha$-H\"older continuous in $Q$
with respect to the parabolic distance. We denote by $W^{k}_p(\Omega)$, $k=1,2,...$, the standard Sobolev spaces.

We recall the notion of $L^p$-viscosity solutions of parabolic PDE (\ref{eq:dfn}). 
To this end, we denote by $B_r(x)$ the open ball in $\R^{n}$ with the radius $r>0$ and the center $x$, 
and define the parabolic cylinders
\[Q_r(x,t):=(x,t)+(-r,r)^n\ti(-r^2, 0].\]   %%CHANGE BALL BY CUBE

%%%%%%%%%%%% Def of VS

\begin{dfn}
Let $Q'$ be a relatively open subset of $Q$. A function $u\in C(Q')$ is said to be an $L^p$-viscosity subsolution (resp., 
supersolution) 
of $(\ref{eq:dfn})$ if for $\phi\in W^{2,1}_{p,\loc}(Q')$, 
we have
\[
\lim_{\e\to 0}ess\under{(y,s)\in Q_\e(x,t)}{\, \inf}
\le\{\phi_t(y,s)+F(y,s,D\phi (y,s),D^2\phi (y,s))
\ri\}\leq 0
\]
\[\left(\mbox{resp., }\lim_{\e\to 0}ess\under{(y,s)\in Q_\e(x,t)}{\, \sup}
\le\{\phi_t(y,s)+F(y,s,D\phi (y,s),D^2\phi (y,s))
\ri\}\geq 0\right)
\]
provided that $u-\phi$ attains a maximum (resp., minimum) 
at $(x,t)\in Q'$ over some parabolic cylinder $Q_r(x, t)\subset Q'$. 
A function $u\in C(Q')$ is said to be an $L^p$-viscosity solution of $(\ref{eq:dfn})$  
  if $u$ is an $L^p$-viscosity subsolution and supersolution of $(\ref{eq:dfn})$.
%\end{enumerate}
\end{dfn}

%%%%%%%%%%%%%  Remark 2.2

\begin{rem}
We note that $W^{2,1}_{p,\loc}(Q)\subset C(Q)$ for $p>\fr{n+2}{2}$ and if $\O$ is regular enough (e.g. if $\partial\O$ is
$C^{1,1}$) then $W^{2,1}_{p}(Q)\subset C^\alpha(Q)$ for $\alpha=2-(n+2)/p$ is a bounded imbedding for $\fr{n+2}{2}<p<n+2$.
If $u\in W^{2,1}_{p,\loc}(Q)$ for $p>n+2$ then $u_{x_i}\in C^\alpha$ for $\alpha=1-(n+2)/p$ (see e.g. \cite{LSU}).
Also, it is known that if  $p>\fr{n+2}{2}$ and $u\in W^{2,1}_p(Q)$, then $u_t$,  $u_{x_i}$ and $u_{x_ix_j}$ ($1\leq i,j\leq n$) exist $a.e.$ in $Q$ (see {\rm \cite{CFKS}}). 
\end{rem}

%%%%%%%%%%%%%%%%%%     SubSection   ABP maximum principle
%%%%%%%%%%%%%%%%%%%

In this section, we recall the ABP maximum principle for $L^p$-viscosity subsolutions of the following  extremal uniformly parabolic equations:
\begin{equation}\label{P-}
u_t+\P^-(D^2u)-\mu |Du|-f=0 \quad \mbox{in } Q , 
\end{equation}
where 
\[f\in L^p(Q), \quad\mbox{and}\quad \mu\in L^q(Q).\]
We will suppose that the powers $p$ and $q$ satisfy the condition
\begin{equation}\label{Apq1}
q>n+2,\,\,\,p_1<p\leq q,
\end{equation}
%one of 
%the conditions: 
%\begin{equation}\label{Apq1}
% n+2<p\leq q \quad \mbox{or} \quad p=n+2<q, 
%\end{equation}
%\begin{equation}\label{Apq2}
%p_1<p< n+2<q ,
%\end{equation}
where $p_1=p_1(n,\fr{\L}{\l})\in [\frac{n+2}{2},n+1)$ is     
the constant, which gives a range where 
the ABP maximum principle holds, see e.g. \cite{KS2}.  

%%%%%%%%% Prop. 2.3   Proposition (Existence of strong solution)

\begin{prop}\label{prop:p_1}(cf. Theorem 2.8 in \cite{CKS}, Proposition 3.3 in \cite{KS2}) 
For  $p>p_1$, there exists a constant $C=C(n,\L,\l,p)>0$ 
such that for $f\in L^p(Q)$, there exists $u\in C(\ol{Q})\cap W^{2,1}_{p,\loc}(Q)$ such that 
\[
\le\{\begin{array}{rl}
u_t+\P^+(D^2u)-f(x,t)=0&{\it a.e.} \ \mathrm{in} \ Q,\\
u= 0 &{\rm on} \ \p_p Q,
\end{array}\ri.
\]
and
\[-C\| f^-\|_{L^p(Q)}\leq u\leq C\| f^+\|_{L^p(Q)} \quad \mathrm{in} \ Q.\]
Moreover, for $Q'\Subset Q$, there is $C'=C'(n,\L,\l,p,T,{\rm diam}(\O),\dist(Q',\p_pQ))>0$ such that  
\[
\|u\|_{W^{2,1}_p(Q')}\leq C'\|f\|_{L^p(Q)}.\]
\end{prop}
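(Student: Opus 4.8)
The plan is to construct $u$ as the locally uniform limit of classical solutions of regularized problems, and to control that limit by the parabolic ABP maximum principle of \cite{KS2} together with the interior $W^{2,1}_p$ estimate for the extremal parabolic equation.

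\textbf{Regularization and existence.} First I would fix smooth subdomains $\O_m\Subset\O_{m+1}\Subset\O$ with $\bigcup_m\O_m=\O$, put $Q_m:=\O_m\ti(0,T]$, and choose $f_m\in C^\infty(\ol Q)$ with $f_m\to f$ in $L^p(Q)$. On each $Q_m$ one solves the forward initial--boundary value problem
\[
(u_m)_t+\P^+(D^2u_m)=f_m\ \text{ in }Q_m,\qquad u_m=0\ \text{ on }\p_pQ_m .
\]
Since $\P^+$ is a convex, uniformly parabolic operator independent of $(x,t,u,Du)$, a classical solution $u_m\in C^{2,1}(Q_m)\cap C(\ol{Q_m})$ exists: one deforms $\P^+$ to $-\l\Delta$ through the family $\theta\P^+-(1-\theta)\l\Delta$ ($\theta\in[0,1]$) of convex uniformly parabolic operators, runs the method of continuity, and closes the loop with the a priori bounds below, the parabolic Evans--Krylov interior $C^{2,\a}$ estimate (available by convexity of $\P^+$), and parabolic Schauder theory near the smooth lateral boundary of $Q_m$ (see e.g. \cite{W,L}).

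\textbf{A priori estimates.} Because $\P^-(X)\le\P^+(X)$ and $f_m\le f_m^+$, the function $u_m$ is a classical (hence $L^p$-viscosity) subsolution of $w_t+\P^-(D^2w)-f_m^+=0$ in $Q_m$; and, using $\P^-(-X)=-\P^+(X)$, the function $-u_m$ is a subsolution of $w_t+\P^-(D^2w)-f_m^-=0$. Applying the parabolic ABP maximum principle for $L^p$-viscosity subsolutions of extremal equations (the case $\mu\equiv0$ recalled from \cite{KS2}, valid since $p>p_1$) and $u_m=0$ on $\p_pQ_m$ yields
\[
-C\|f_m^-\|_{L^p(Q)}\le u_m\le C\|f_m^+\|_{L^p(Q)}\quad\text{in }Q_m,
\]
with $C=C(n,\L,\l,p)$. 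Next, the interior $W^{2,1}_p$ estimate for the extremal parabolic operator (the parabolic analogue of Caffarelli's $W^{2,p}$ estimate, \cite{W1,CKS}) gives, for every $Q'\Subset Q$ and all $m$ large,
\[
\|u_m\|_{W^{2,1}_p(Q')}\le C'\bigl(\|u_m\|_{L^\infty(Q_m)}+\|f_m\|_{L^p(Q)}\bigr)\le C''\|f_m\|_{L^p(Q)},
\]
with $C''=C''(n,\L,\l,p,T,{\rm diam}(\O),\dist(Q',\p_pQ))$.

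\textbf{Passage to the limit.} From the interior bound and the imbedding $W^{2,1}_p\hookrightarrow C^\a$ (valid for $p>\fr{n+2}{2}$) the family $\{u_m\}$, extended by $0$ outside $Q_m$, is equicontinuous on compact subsets of $Q$; equicontinuity up to $\p_pQ$ follows from a localized barrier argument: near $z_0\in\p_pQ$ one picks $\rho$ so small that $\|f_m\|_{L^p(Q_\rho(z_0)\cap Q)}<\e$ uniformly in $m$ (possible since $f_m\to f$ in $L^p$) and compares $u_m$ on $Q_\rho(z_0)\cap Q_m$ with an explicit radial super/subsolution of an extremal equation vanishing at $z_0$, together with the global $L^\infty$ bound just obtained. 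Hence $\{u_m\}$ is bounded and equicontinuous on $\ol Q$, so along a subsequence $u_m\to u$ uniformly on $\ol Q$ and weakly in $W^{2,1}_p(Q')$ for each $Q'\Subset Q$. Then $u\in C(\ol Q)$ with $u=0$ on $\p_pQ$, $u\in W^{2,1}_{p,\loc}(Q)$ with the displayed interior bound (weak lower semicontinuity of the norm), the two-sided $L^\infty$ bound passes to the limit, and by the stability of $L^p$-viscosity solutions under uniform convergence of $u_m$ and $L^p$-convergence of $f_m$ (\cite{CCKS,CFKS,CKS}) $u$ is an $L^p$-viscosity solution of $u_t+\P^+(D^2u)=f$; being in $W^{2,1}_{p,\loc}(Q)$ it satisfies this equation a.e.\ in $Q$.

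\textbf{Main obstacle.} The genuinely deep ingredient is the interior $W^{2,1}_p$ estimate for the extremal parabolic equation---the parabolic counterpart of Caffarelli's $W^{2,p}$ estimate, proved by a Calder\'on--Zygmund decomposition into parabolic cubes and measure estimates for the contact set with paraboloids, ultimately resting on the parabolic ABP of Krylov--Tso and its $L^p$-version from \cite{KS2}; here it is simply quoted from \cite{W1,CKS}. A secondary point requiring care is reaching the parabolic boundary when $\O$ is merely a bounded open set, which is why the estimate near $\p_pQ$ must be localized (ABP on small cylinders plus an elementary barrier) rather than derived from regularity of $\p\O$.
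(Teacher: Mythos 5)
The paper does not prove Proposition~\ref{prop:p_1}; it is stated with citations to Theorem~2.8 of \cite{CKS} and Proposition~3.3 of \cite{KS2} and used afterwards as a black box, so there is no in-paper proof to compare against. Your reconstruction follows the standard route underlying those references --- smooth approximations $f_m$ of $f$ and $\O_m$ of $\O$, classical solvability on $Q_m$ via the continuity method plus Evans--Krylov and Schauder (legitimate since $\P^+$ is convex, uniformly parabolic, and independent of $(x,t,u,Du)$), the global two-sided $L^\infty$ bound from the parabolic ABP of \cite{KS2} applied to $u_m$ and $-u_m$ as subsolutions of the $\P^-$-extremal equation, the interior $W^{2,1}_p$ estimate of \cite{W1,CKS}, and a diagonal/compactness passage to the limit together with the equivalence of $L^p$-viscosity and $L^p$-strong solutions when $u\in W^{2,1}_{p,\loc}$. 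This is correct and is essentially how the result is obtained in the cited sources.

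One inaccuracy worth flagging is the claim in your closing paragraph that equicontinuity up to the lateral boundary $\p\O\ti[0,T)$ can be obtained by a localized ABP-plus-barrier argument ``rather than derived from regularity of $\p\O$''. A barrier for the $\P^-$-extremal operator pinned to a lateral boundary point needs some exterior geometric condition on $\O$ (exterior cone or similar); for a completely arbitrary bounded open set, continuity at the lateral boundary can fail and $u\in C(\ol Q)$ cannot be expected. The sources cited do impose mild boundary regularity, and in this paper $\O$ is eventually a cube, so the gap is harmless in context, but your barrier argument silently uses such a hypothesis and you should name it. With that caveat the proposal is sound.
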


We emphasize that the dependence of constants on various parameters sometimes may mean that a constant may blow up
as a parameter converges to $0$, for instance the constant $C'$ in Proposition \ref{prop:p_1} may blow up as $\lambda\to 0$. The precise dependence of constants on $T, {\rm diam}(\O), {\rm diam}(Q)$ can often be found by scaling.

We state in Proposition \ref{thm:ABP} a scaled version of the  ABP maximum principle for $L^p$-strong and $L^p$-viscosity solutions of 
\eqref{P-} based on the results of \cite{KS2}. 
For $u\in C(\ol{Q})$, we introduce the set 
\[Q_+[u]:=\le\{(x,t)\in Q \ 
\le| \ u(x, t)>\low{\p_pQ}\sup\, u^+ \ri.\ri\}. \]
We denote by $L^p_+(Q)$ for the set of all nonnegative functions in $L^p(Q)$.

%%%%%%%%  Remark 2. 4

\begin{rem}
The non-scaled statement of the classical ABP maximum principle for strong solutions in Proposition 3.2 of \cite{KS2} was slightly incorrect and might be confusing. The exact ABP inequality from \cite{Tso} is
\begin{equation}\label{MP1}
\|u\|_{L^\infty(Q)}
\leq\|\psi\|_{L^\infty(\p_p Q)}+C_1d_{\O}^{\frac{n}{n+1}}\exp\le(C_2d_{\O}^{-1}\|\mu\|^{n+1}_{L^{n+1}(Q)}\ri)\|f\|_{L^{n+1}(Q)},
\end{equation}
where $d_{\O}={\rm diam}(\O)$, which behaves differently from the one in \cite{KS2} when ${\rm diam}(\O)$ is small. This however does not affect the results of \cite{KS2} since the proofs
there only used (\ref{MP1}) in parabolic cylinders of fixed size which contained $Q$ and did not depend on ${\rm diam}(\O)$.
\end{rem}

%%%%%%%%  Proposition  2. 5

\begin{prop}\label{thm:ABP}(see Proposition 3.6, Theorem 3.10 of \cite{KS2}) 
Let \eqref{Apq1} hold and let
 $f\in L^p_+(Q),\mu\in L^q_+(Q)$. 
There exists a constant $C_1=C_1(n,\L,\l,p,q,d_Q^{1-(n+2)/q}\|\mu\|_{L^{q}(Q)})>0$ 
such that 
if $u\in C(\ol{Q})$ is either an $L^p$-strong or an $L^p$-viscosity subsolution of  $(\ref{P-})$, then 
\begin{equation}\label{ABP2}
\under{Q}{\sup}\,u\leq 
\under{ \p_p Q }{ \sup }\, u +C_1d_Q^{2-\frac{n+2}{p}}\|f\|_{L^{p}(Q)},
\end{equation}
where $d_Q={\rm diam}(Q)$.
Moreover, if $v\in C(\ol{Q})$ is an $L^p$-viscosity subsolution of $(\ref{P-})$ in $Q_+[v]$ then
\begin{equation}\label{ABP3}
\under{Q}{\sup}\,v\leq 
\under{ \p_p Q }{ \sup }\, v^+ +C_1d_Q^{2-\frac{n+2}{p}}\|f\|_{L^{p}(Q_+[v])}.
\end{equation}
\end{prop}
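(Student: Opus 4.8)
The plan is to deduce the scaled statement of Proposition~\ref{thm:ABP} from the (essentially non-scaled) ABP maximum principle for parabolic equations established in \cite{KS2} (Propositions~3.6 and~3.10 there), by a routine but careful rescaling argument. First I would reduce to the case of a fixed reference cylinder: given $Q=\O\ti(0,T]$, let $d_Q={\rm diam}(Q)$ and introduce the parabolic dilation $(x,t)\mapsto (x/d_Q,\,t/d_Q^2)$, which maps $Q$ onto a domain $\tilde Q$ of parabolic diameter $1$. Setting $\tilde u(y,\tau):=u(d_Q y, d_Q^2\tau)$, one checks that $\tilde u$ is an $L^p$-viscosity (resp.\ $L^p$-strong) subsolution of the rescaled extremal equation $\tilde u_\tau+\P^-(D^2\tilde u)-\tilde\mu|D\tilde u|-\tilde f=0$ in $\tilde Q$, where $\tilde\mu(y,\tau)=d_Q\mu(d_Q y,d_Q^2\tau)$ and $\tilde f(y,\tau)=d_Q^2 f(d_Q y,d_Q^2\tau)$. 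Here I use that $\P^-$ is positively homogeneous of degree one in $D^2u$ and that the structure condition (\ref{UP}) is dilation-invariant; the $L^p$-viscosity property transfers because test functions and the essential-infimum condition in the definition scale consistently under parabolic dilations.

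The next step is to track the norms: a direct change of variables gives $\|\tilde f\|_{L^p(\tilde Q)}=d_Q^{2-(n+2)/p}\,\|f\|_{L^p(Q)}$ and $\|\tilde\mu\|_{L^q(\tilde Q)}=d_Q^{1-(n+2)/q}\,\|\mu\|_{L^q(Q)}$, which is exactly why the constant $C_1$ in the statement is allowed to depend on the scale-invariant quantity $d_Q^{1-(n+2)/q}\|\mu\|_{L^q(Q)}$ rather than on $\|\mu\|_{L^q(Q)}$ alone. Applying the unscaled ABP inequality of \cite{KS2} on $\tilde Q$ (embedded, if necessary, in a fixed cylinder of unit parabolic size, as noted in the Remark preceding the statement, so that the constant there does not degenerate with ${\rm diam}(\tilde\O)$) yields
\[
\sup_{\tilde Q}\tilde u\le \sup_{\p_p\tilde Q}\tilde u + \tilde C\,\|\tilde f\|_{L^p(\tilde Q)},
\]
with $\tilde C=\tilde C(n,\L,\l,p,q,d_Q^{1-(n+2)/q}\|\mu\|_{L^q(Q)})$. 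Undoing the dilation, $\sup_{\tilde Q}\tilde u=\sup_Q u$ and $\sup_{\p_p\tilde Q}\tilde u=\sup_{\p_p Q}u$, and substituting the norm identity for $\tilde f$ produces exactly (\ref{ABP2}) with $C_1=\tilde C$.

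For the second assertion (\ref{ABP3}), I would argue as in \cite{KS2}: the set $Q_+[v]$ is relatively open in $Q$, and on it $v$ is an $L^p$-viscosity subsolution of (\ref{P-}) with $v=\sup_{\p_p Q}v^+$ on the part of $\p_p(Q_+[v])$ lying in the interior of $Q$ (by continuity of $v$ and the definition of $Q_+[v]$); applying the first part of the proposition on $Q_+[v]$ — and using the monotonicity of the constant in $d_{Q_+[v]}\le d_Q$ together with $f\ge 0$, so that replacing $Q_+[v]$ by $Q$ in the diameter factor only increases the right-hand side — gives (\ref{ABP3}). The only genuinely delicate point is the verification that the $L^p$-viscosity subsolution property and the structure condition survive the parabolic rescaling with the claimed transformation of $f$ and $\mu$, and that one may legitimately invoke the \cite{KS2} estimate on the rescaled domain with a constant depending only on the scale-invariant coefficient norm; everything else is bookkeeping with change of variables in $L^p$.
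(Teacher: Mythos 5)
Your argument for \eqref{ABP2} is essentially identical to the paper's: rescale by the parabolic dilation $(x,t)\mapsto(d_Qx,d_Q^2t)$, observe how $\mu$ and $f$ transform, and invoke the unscaled ABP estimate of \cite{KS2} on a unit-size cylinder. That part is fine.

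The treatment of \eqref{ABP3}, however, has a genuine gap. You propose to ``apply the first part of the proposition on $Q_+[v]$,'' but \eqref{ABP2} is formulated for cylinders of the form $\O\times(0,T]$, and $Q_+[v]=\{(x,t)\in Q : v(x,t)>\sup_{\p_pQ}v^+\}$ is in general an arbitrary relatively open subset of $Q$ with no product structure. There is no well-defined ``parabolic boundary'' of $Q_+[v]$, so the boundary condition you invoke ($v=\sup_{\p_pQ}v^+$ on a portion of $\p_p(Q_+[v])$) does not make sense as stated, and the cylinder-ABP cannot simply be restricted to such a sublevel set. Note also that the hypothesis of \eqref{ABP3} only gives the subsolution property of $v$ inside $Q_+[v]$, so one cannot fall back on applying \eqref{ABP2} over all of $Q$ either. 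The paper instead goes through the standard auxiliary-function device: after rescaling to a unit cylinder, it adds to $w$ a strong subsolution (produced by Proposition 3.5 of \cite{KS2}, or by Proposition~\ref{prop:exist} when $q\geq p>n+2$) of an extremal equation with right-hand side $\tilde f\,\1_{Q_+[w]}$ posed on a slightly larger cylinder; this kills the inhomogeneous term, and the resulting corrected function is handled by the $f$-free, contact-set (Alexandrov-type) version of the ABP estimate, which is the estimate that genuinely localizes to the set $Q_+[\cdot]$. That construction is what produces $\|f\|_{L^p(Q_+[v])}$ on the right-hand side; your monotonicity-in-diameter observation, while true, does not substitute for it. To repair your argument, replace the ``restrict \eqref{ABP2} to $Q_+[v]$'' step with the subtraction of such an auxiliary strong subsolution, following the outline in the proof of Proposition 2.8 of \cite{KS2}.
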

We remark that when $p\geq n+2$ then (\ref{ABP2}) can be made more precise based on (\ref{MP1})
or on a scaled version of (\ref{MP1}) in a unit cylinder. 
We also remark that it can be proved that if \eqref{Apq1} holds then an $L^p$-strong subsolution of $u_t+\P^\pm(D^2u)\pm\mu |Du|=f$ is an $L^p$-viscosity subsolution
of %%%%%%%%%%%% 
those. %%%%%%%  ADDED
We refer to \cite{KS3}, Section 3, for such a proof in the elliptic case. Similar statement holds for 
$L^p$-viscosity supersolutions.
\begin{proof}[Proof of Proposition \ref{thm:ABP}]
To see why \eqref{ABP2} is true we notice that the function $w(x,t)=u(d_Qx,d_Q^2t)$ is an $L^p$-strong or an $L^p$-viscosity subsolution of
\[
w_t+\P^-(D^2w)-\tilde\mu |Dw|-\tilde f=0 
\]
in a unit cylinder $Q_1$, where $\tilde\mu(x,t)=d_Q\mu(d_Qx,d_Q^2t)$ and $\tilde f(x,t)=d_Q^2f(d_Qx,d_Q^2t)$.
Then, by the estimates of \cite{KS2}
\[
\under{Q_1}{\sup}\,w\leq 
\under{ \p_p Q_1 }{ \sup }\, w +C_1(n,\L,\l,p,q,\|\tilde\mu\|_{L^{q}(Q_1)})\|\tilde f\|_{L^{p}(Q_1)}.
\]
It remains to notice that $\|\tilde\mu\|_{L^{q}(Q_1)}=d_Q^{1-(n+2)/q}\|\mu\|_{L^{q}(Q)}$ and
$\|\tilde f\|_{L^{p}(Q_1)}=d_Q^{2-\frac{n+2}{p}}\|f\|_{L^{p}(Q)}$. 

Estimate \eqref{ABP3} is proved similarly by
rescaling and adding to $w$ a subsolution of an extremal equation in a bigger cylinder to eliminate $\tilde f$, which can be found using
Proposition 3.5 of \cite{KS2} (or using Proposition \ref{prop:exist}
 if $q\geq p>n+2$). The reader can find a similar argument in the proof
of Proposition 2.8 of \cite{KS2}.
\end{proof}

A result similar to Proposition \ref{prop:exist} can be found in $\cite{KS2}$ (see Proposition 3.5 there). Using global $W^{2,1}_p$ estimates by Dong, Krylov and Li in \cite{DKX}, 
we present a slightly different existence result.

%%%%%%%%%%   Proposition   2. 6 

\begin{prop}\label{prop:exist}%(cf. \cite{DKX})
Assume that $\p\O$ is $C^{1,1}$ and $q\geq p>n+2$. 
Let $\mu\in L^q_+(Q)$, 
$\psi\in W^{2,1}_p(Q)
\cap C(\ol{Q})$
and $f\in L^p(Q)$. The equation
\begin{equation}\label{Exists}
\le\{\begin{array}{rr}
u_t+\P^+(D^2u)+\mu |Du|-f= 0 &{\it a.e.} \ \mathrm{in} \ Q,\\
%v_t+\P^+(D^2v)+\mu(x,t)|Dv|\leq f(x,t) &{\it a.e.} \ \mathrm{in} \ Q, \\
u= \psi &\mathrm{on } \ \p_p Q
\end{array}
\ri.
\end{equation}
has an $L^p$-strong solution $u\in
C(\ol{Q})\cap 
W^{2, 1}_{p}(Q)$. The solution $u$ satisfies
\begin{equation}\label{MP}
\|u\|_{L^\infty(Q)}
\leq\|\psi\|_{L^\infty(\p_p Q)}+C_1d_Q^{2-\frac{n+2}{p}}\|f\|_{L^{p}(Q)}
\end{equation}
(where $C_1$ is the constant from \eqref{ABP2}), and 
\[%\label{LpEst}
\|u\|_{W^{2,1}_p(Q)}
\leq C_2\le(\|\psi\|_{W^{2,1}_p(Q)}+\|f\|_{L^p(Q)}\ri)
\]
for some constant $C_2=C_2(n,\L,\l,p,q,\|\mu\|_{L^q(Q)},T,{\rm diam(\O)},\partial\O)>0$.
\end{prop}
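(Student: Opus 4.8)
The plan is to obtain $u$ as a fixed point of the solution operator for the extremal Dirichlet problem \emph{without} the gradient term, by means of the Leray--Schauder theorem, with the global $W^{2,1}_p$ estimate of \cite{DKX} and the ABP bound \eqref{ABP2} supplying the required a priori estimates. The hypothesis $q\geq p>n+2$ will be used in two ways: it makes $W^{2,1}_p(Q)$ embed compactly into the Banach space $Z:=\{v\in C(\ol Q):Dv\in C(\ol Q;\R^n)\}$, normed by $\|v\|_{C(\ol Q)}+\|Dv\|_{C(\ol Q)}$ (via the parabolic embedding $W^{2,1}_p(Q)\hookrightarrow C^{1+\b}$ in the space variable together with Arzel\`a--Ascoli), and it forces $Dv\in L^\infty(Q)$ for every $v\in W^{2,1}_p(Q)$, so that $\mu|Dv|\in L^q(Q)\subset L^p(Q)$ and the extremal equation with datum $f-\mu|Dv|$ is meaningful.

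First I would record the building block: for every $g\in L^p(Q)$ the problem $w_t+\P^+(D^2w)=g$ in $Q$, $w=\psi$ on $\p_pQ$, has a unique $L^p$-strong solution $w\in C(\ol Q)\cap W^{2,1}_p(Q)$, which satisfies $\|w\|_{L^\infty(Q)}\leq\|\psi\|_{L^\infty(\p_pQ)}+Cd_Q^{2-(n+2)/p}\|g\|_{L^p(Q)}$ by \eqref{ABP2} (applied to $w$ and to $-w$) and the global bound $\|w\|_{W^{2,1}_p(Q)}\leq C(\|g\|_{L^p(Q)}+\|\psi\|_{W^{2,1}_p(Q)})$; this follows from Proposition \ref{prop:p_1} and \cite{DKX} (by a standard approximation in the boundary datum), while uniqueness follows again from \eqref{ABP2} applied to a difference of solutions. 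I would then define $\mathcal T:Z\times[0,1]\to Z$ by letting $\mathcal T(v,\s)$ be the $L^p$-strong solution of $w_t+\P^+(D^2w)=\s(f-\mu|Dv|)$ in $Q$ with $w=\s\psi$ on $\p_pQ$. This map is well defined, $\mathcal T(\cdot,0)\equiv 0$, it is continuous (if $v_k\to v$ in $Z$ then $\mu|Dv_k|\to\mu|Dv|$ in $L^p(Q)$ by dominated convergence, and the difference of the corresponding solutions tends to $0$ in $Z$ upon combining the $L^\infty$ bound above, which vanishes, with the bounded $W^{2,1}_p$ estimate and the interpolation inequality $\|z\|_Z\leq\e\|z\|_{W^{2,1}_p(Q)}+C_\e\|z\|_{L^\infty(Q)}$, valid for $p>n+2$), and it is compact since it sends bounded subsets of $Z$ into bounded subsets of $W^{2,1}_p(Q)$, which is precompact in $Z$.

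The decisive, and hardest, step is the uniform a priori bound on fixed points. If $u=\mathcal T(u,\s)$, then $u_t+\P^+(D^2u)+\s\mu|Du|-\s f=0$ a.e.\ in $Q$ and $u=\s\psi$ on $\p_pQ$; since $\s\in[0,1]$, $\mu\geq 0$ and $\P^-\leq\P^+$, the functions $u$ and $-u$ are $L^p$-strong subsolutions of $z_t+\P^-(D^2z)-\mu|Dz|-f^{+}=0$ and of $z_t+\P^-(D^2z)-\mu|Dz|-f^{-}=0$, respectively, so \eqref{ABP2} gives $\|u\|_{L^\infty(Q)}\leq\|\psi\|_{L^\infty(\p_pQ)}+C_1d_Q^{2-(n+2)/p}\|f\|_{L^p(Q)}$ uniformly in $\s$, which is also \eqref{MP}. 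Reading the equation as $u_t+\P^+(D^2u)=\s(f-\mu|Du|)$ and invoking \cite{DKX},
\[
\|u\|_{W^{2,1}_p(Q)}\leq C\bigl(\|\psi\|_{W^{2,1}_p(Q)}+\|f\|_{L^p(Q)}+\|\mu|Du|\|_{L^p(Q)}\bigr),
\]
while H\"older's inequality together with $p>n+2$ gives $\|\mu|Du|\|_{L^p(Q)}\leq|Q|^{1/p-1/q}\|\mu\|_{L^q(Q)}\|Du\|_{L^\infty(Q)}\leq|Q|^{1/p-1/q}\|\mu\|_{L^q(Q)}\bigl(\e\|u\|_{W^{2,1}_p(Q)}+C_\e\|u\|_{L^\infty(Q)}\bigr)$. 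Choosing $\e$ so that $C|Q|^{1/p-1/q}\|\mu\|_{L^q(Q)}\e\leq\tfrac12$, absorbing the $W^{2,1}_p$ term, and inserting the $L^\infty$ bound, one obtains $\|u\|_{W^{2,1}_p(Q)}\leq C_2\bigl(\|\psi\|_{W^{2,1}_p(Q)}+\|f\|_{L^p(Q)}\bigr)$ with $C_2$ of the form stated. Hence all fixed points of $\mathcal T(\cdot,\s)$ lie in one ball of $Z$, and the Leray--Schauder theorem yields $u$ with $u=\mathcal T(u,1)$, i.e.\ an $L^p$-strong solution of \eqref{Exists}; it satisfies \eqref{MP} and the $W^{2,1}_p$ estimate by the bounds just derived. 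I expect the main obstacle to be precisely this absorption: one must move the drift $\mu|Du|$ to the right-hand side and estimate it in $L^p$, which is exactly where $q\geq p>n+2$ and, crucially, a \emph{global} (not merely interior) $W^{2,1}_p$ estimate are needed; checking the continuity and compactness of $\mathcal T$ in a space controlling $Du$ is comparatively routine.
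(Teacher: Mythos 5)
Your proof is correct but takes a genuinely different route. The paper proceeds by approximation: it replaces $f,\mu$ by smooth $f^j,\mu^j$, takes the classical solutions $u^j$ of the approximate problems (their existence is used implicitly), derives a uniform bound $\|u^j\|_{W^{2,1}_p(Q)}\leq C(\|f^j\|_{L^p}+\|\psi\|_{W^{2,1}_p})$ from the global \cite{DKX} estimate combined with the interpolation inequality \eqref{eq:ASI} and ABP, then passes to a weak limit in $W^{2,1}_p$ together with uniform convergence of $u^j$ and local uniform convergence of $Du^j$, concluding via the stability theory of \cite{CKS} that the limit is an $L^p$-viscosity (hence strong) solution of $u_t+\P^+(D^2u)=-\mu|Du|+f$. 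You instead freeze the gradient: for $v$ in a $C^1$-type space $Z$ you solve the drift-free extremal problem with right-hand side $\sigma(f-\mu|Dv|)$, show the resulting map $\mathcal T(\cdot,\sigma)$ is compact (via $W^{2,1}_p\hookrightarrow Z$ compactly for $p>n+2$) and continuous (via ABP plus interpolation), derive the a priori bound on fixed points by exactly the same absorption argument as the paper, and invoke Leray--Schauder. The two analytical inputs are identical in both arguments---the global $W^{2,1}_p$ estimate of \cite{DKX} and the interpolation/Lemma 3.3 of \cite{LSU} that controls $\|Du\|_{L^\infty}$, with ABP providing the $L^\infty$ bound. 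What differs is how existence is produced: the paper relies on solvability for smooth data and a limiting/stability argument, whereas you replace that with the well-posedness of the drift-free problem (which you still need, from \cite{DKX}, for general $W^{2,1}_p$ boundary datum) plus continuity and compactness of a linearized solution operator. Your route makes the $L^p$-strong character of the limit automatic and avoids invoking the stability machinery of \cite{CKS}, at the cost of verifying the Leray--Schauder hypotheses; the paper's route avoids that verification but needs to justify the passage to the limit in the nonlinear gradient term, which is why it keeps track of local uniform convergence of $Du^j$.

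One small point worth flagging: your building block (unique $L^p$-strong solvability of $w_t+\P^+(D^2w)=g$, $w=\psi$ on $\p_pQ$, with the global $W^{2,1}_p$ bound) is really Theorem 1.1 of \cite{DKX}, i.e.\ the content of the paper's Proposition \ref{5GlobalEx}, rather than something that follows from Proposition \ref{prop:p_1} (which is stated only for zero boundary data and gives only interior estimates). Citing \cite{DKX} directly there would be cleaner.
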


%%%%%%%%%%  Remark   2.  7 

\begin{rem}
The function $u$ in Proposition \ref{prop:exist} is also 
an $L^p$-viscosity %%%%%%%%%%%  CHANGED  
solution of \eqref{Exists}. 
We also note that  $p>n+1$ is assumed in \cite{DKX} while we assume 
$p> n+2$.
\end{rem}

\begin{proof}
Let $f^j$, $\mu^j\in C(\ol Q)$ be such that $\| f^j-f\|_{L^p(Q)}+\| \mu^j-\mu\|_{L^q(Q)}\to 0$, and 
$(f^j,\mu^j)\to (f,\mu)$ almost everywhere in $Q$  as $j\to \infty$.  
Let $u^j\in C(\ol{Q})\cap C^{2,1}(Q)$ be the classical solution of  
\begin{eqnarray*}%\label{Strongsol}
\le\{ \begin{array}{rl}
u^j_{t}+\P^+(D^2u^j)+\mu^j|Du^j|-f^j=0&\mbox{ in } Q,\\ 
u^j=\psi & \mbox{on } \p_p Q.
\end{array}\ri.
\end{eqnarray*}
Here and later, $C>0$ stands for various constants depending only on known quantities. 
We know from \cite{DKX} that %\cite{W} that (IS IT \cite{W} OR \cite{DKX}? WHERE IS \cite{DKX}? USED HERE?)  
\begin{equation}\label{eq1}
\|u^j\|_{W^{2,1}_{p}(Q)}\leq C\le(\le\|\mu^j|Du^j|-f^j\ri\|_{L^{p}(Q)}+\| \psi\|_{W^{2,1}_{p}(Q)}\ri).
\end{equation}
It is also known (e.g. Lemma 3.3 in \cite{LSU}) that for sufficiently small $\e>0$, 
\begin{equation}\label{eq:ASI}
\|D u^j\|_{L^\infty(Q)}\leq \e^{\a_1} \le( \|D^2u^j\|_{L^p(Q)} + 
\| u^j_{t}\|_{L^{p}(Q)}\ri)+\e^{-\a_2}C\| u^j\|_{L^p(Q)},
\end{equation}
where $\a_1=1-\fr{n+2}{p}>0$ and $\a_2=1+\fr{n+2}{p}>0$. 
Combining \eqref{eq:ASI} with \eqref{eq1}, in view of the global estimates in 
\cite{DKX}, we have  
\[%\label{eq.1}
\begin{array}{rl}
\|u^j\|_{W^{2,1}_p(Q)}
\leq&%C\le(\|u_{j,t}+\P^-(D^2u_j)\|_{L^p(Q)}+\|\psi\|_{W^{2,1}_p(Q)}\ri)\\
%=&
C\le( \|f^j\|_{L^p(Q)}+\le\|\mu^j Du^j\ri\|_{L^p(Q)}+\|\psi\|_{W^{2,1}_p(Q)}\ri)\\
\leq &C\|\mu^j\|_{L^p(Q)}\le\{\e^{\a_1}
\le(\|D^2u^j \|_{L^p(Q)}+\|u^j_{t}\|_{L^p(Q)}\ri)
+\e^{-\a_2}\| u^j\|_{L^p(Q)}\ri\}\\
&+C\le(\| f^j\|_{L^p(Q)}+\| \psi\|_{W^{2,1}_p(Q)}\ri).
\end{array}
\]
Hence, for an appropriate $\e>0$ (depending on $\|\mu^j\|_{L^p(Q)}$), using the ABP maximum principle, we obtain
\begin{equation}\label{W21p}
\| u^j\|_{W^{2,1}_p(Q)}\leq C\le( \|f^j\|_{L^p(Q)}+\| \psi\|_{W^{2,1}_{p}(Q)}\ri).
\end{equation}

%We now assume $p=n+2<q$. 
%Instead of $(\ref{eq:ASI})$, we use the following inequality from Lemma 3.3 in \cite{LSU}: for any $r>\fr{n+2}{2}$, there exists a %constant $C=C(r)>0$ such that   
%\begin{equation}\label{eq:0}
%\|D u^j\|_{L^r (Q)}\leq \e^{\a_3}\le(\|u^j_{t}\|_{L^{n+2}(Q)}+\| D^2u^j\|_{L^{n+2}(Q)}\ri)
%+C\e^{-\a_4}\| u^j\|_{L^{n+2}(Q)},
%\end{equation}
%where $\a_3=\fr{n+2}{r}>0$ and $\a_4=2-\fr{n+2}{r}>0$. 
%Setting $r:=\fr{q(n+2)}{q-n-2}>\fr{n+2}{2}$, by the H\"{o}lder inequality, we have 
%$$
%\begin{array}{rl}
%\|\mu^jD u^j\|_{L^{n+2}(Q)}\leq&\|\mu^j\|_{L^q(Q)}\|D u^j\|_{L^{r}(Q)}\\
%\leq& \|\mu^j\|_{L^q(Q)}\le\{\e^{\a_3}\le(\| u^j_{t}\|_{L^{n+2}(Q)}+\| D^2u^j\|_{L^{n+2}(Q)}\ri)+C\e^{-\a_4}\| u^j\|_{L^{n+2}(Q)}\ri\}.
%\end{array}$$
%Using $(\ref{eq:0})$ with this estimate, we obtain \eqref{W21p} when $p=n+2$. 

Since (by anisotropic Sobolev imbeddings) the functions $u^j$ are equicontinuous in $C(\ol{Q})$ and $u^j_{x_i}, i=1,...,n$, are locally
equicontinuous, by taking a subsequence, we can assume that there exists $u\in W^{2,1}_p(Q)\cap C(\ol{Q})$ satisfying
\eqref{MP} and \eqref{W21p}
such that $u^j\rightharpoonup u$ in $W^{2,1}_p(Q)$,
$u^j\to u$ in $C(\ol{Q})$ and $u^j_{x_i}\to u_{x_i}$ locally uniformly. Thus $-\mu^j|Du^j|+f^j\to -\mu|Du|+f$
in $L^p_{\rm loc}(Q)$. It is then standardized by the techniques of \cite{CKS}
to obtain that $u$ is an $L^p$-viscosity solution and hence an $L^p$-strong solution of
\[
u_{t}+\P^+(D^2u)=g,
\]
where $g=-\mu|Du|+f$, which concludes the proof.
\end{proof}

%%%%%%%%%%%%%%%%%%%%%%%%%%%%%
%%%%%%%%%%%%%%%%
%%%%%%%%%%%%%%          Section 3
%%%%%%%%%%%%%%%%%%
%%%%%%%%%%%%%%%%%%%%%%%%%%%%%%

\section{The weak Harnack inequality}%%%%%%%%%%%%%%%%%%%%%%%%%%%SECTION#3
%We fix $R>\sqrt{n}$ and 
In what follows, we set $\O:=(-10,10)^n,T=10$ and
\[
Q:=(-10,10)^n\ti (0,10].\]
Although we need to suppose $\p\O\in C^{1,1}$ to use Proposition \ref{prop:exist}, for the sake of simplicity of the presentation, we will assume that  %%%%%%%  CHANGED  
the boundary of cubes are $C^{1,1}$. Otherwise we would have to use a smooth domain similar to $(-10,10)^n$. 
We refer to \cite{KS3} for such an argument.

In this section, we show the weak Harnack inequality for nonnegative $L^p$-viscosity supersolutions of
\begin{equation}\label{P+}
u_t +\P^+(D^2u) +\mu |Du|+f=0 \quad\mbox{in } Q, 
\end{equation}
where $f \in L^p_+(Q)$ and $\mu\in L^q_+(Q)$.

%%%%%%%%%%%
%%%%%%%%%%%      3. 1.  A restricted case
%%%%%%%%%%%

\subsection{A restricted case}

In order to show the weak Harnack inequality for nonnegative $L^p$-viscosity %%%%%%%  CHANGED 
 supersolutions of $(\ref{P+})$ with $f\in L^p_+(Q)$ 
and $\mu\in L^q_+(Q)$, %%%%%%%%  ADDED  "+"
we follow the standard argument as in \cite{ImbSil13} except for a new barrier function, which will be constructed in Lemma \ref{lem:barrier}. 
However, for this purpose, we first have to show the weak Harnack inequality under 
a restricted setting.

%%%%%%%%%%%%%%%%%%%%%%%%%     Theorem  3. 1. 

\begin{thm}
\label{thm:weak0}
Assume that $(\ref{Apq1})$
holds, $f\in L^p_+(Q)$ and $\mu\in L^q_+(Q)$.   
Then, there exist constants $\e_0=\e_0(n,\L,\l,p,q)>0$, 
$\d_0=\d_0(n,\L,\l, p,q)>0$ and 
$C_0=C_0(n,\L,\l,p,q)>0$ 
such that if 
\begin{equation}\label{wHsmall}
\|\mu\|_{L^{p}%n+1}                           CHANGED 
(Q)}\leq \d_0,
\end{equation}
then 
any nonnegative $L^p$-viscosity supersolution $u$
%\in C(\overline{Q})$ 
of \eqref{P+} satisfies  
\[%\label{WHI}
\le(\int_{J_1} u^{\e_0} \,dxdt\ri)^{\fr{1}{\e_0}}\leq C_0\le(\low{J_2}{\inf}\, u +\|f\|_{L^{ p}(Q)}\ri),
\]
where 
\[J_1:=(-1, 1)^n\ti (0,2^{-1} ],\quad\mbox{and}\quad J_2:=(-1, 1)^n\ti (9, 10].\]
\end{thm}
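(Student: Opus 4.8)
The plan is to follow the now-classical Krylov--Safonov/Caffarelli approach in its parabolic incarnation (as in \cite{ImbSil13}, \cite{CafCab}), the key analytic input being a ``measure estimate'' (often called the $L^\e$-estimate) that says the superlevel sets of $u$ cannot be too small, combined with a covering/stacking argument adapted to the parabolic geometry. I would proceed in the following steps.

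\textbf{Step 1: A point estimate via barriers.} First I would establish the fundamental quantitative lemma: if $u\geq 0$ is an $L^p$-viscosity supersolution of \eqref{P+} on a suitable cylinder, $\inf u$ is small on a small cylinder near the ``top'' and $\|f\|_{L^p}$ is small, then $u$ must already be bounded below on a definite fraction of an intermediate cylinder, i.e.\ the set $\{u\leq M\}$ occupies a positive proportion of some $Q_r$. This is where the smallness hypothesis \eqref{wHsmall} on $\|\mu\|_{L^p(Q)}$ is used: it lets us absorb the $\mu|Du|$ term. Concretely, one subtracts from $u$ a barrier (a smooth subsolution of the extremal equation $w_t+\P^+(D^2w)-\mu|Dw|-f=0$ that is negative in the interior and vanishes or is positive on the appropriate part of the parabolic boundary), and applies the ABP maximum principle in the form \eqref{ABP3} of Proposition \ref{thm:ABP} to $w-u$ on the set where it is positive. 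The smallness of $\|\mu\|_{L^p}$ guarantees the ABP constant $C_1$ in \eqref{ABP3} is under control (since the relevant quantity is $d_Q^{1-(n+2)/q}\|\mu\|_{L^q}$, and on fixed-size cylinders $\|\mu\|_{L^q}$ is comparable to, or controlled via H\"older by, $\|\mu\|_{L^p}$ once $p\le q$), and the $\|f\|_{L^p}$ term contributes the additive error.

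\textbf{Step 2: Parabolic stacking / the Cube Decomposition.} Next I would upgrade the single measure estimate of Step~1 into a power-decay estimate $|\{u>M^k\}\cap J_1|\le C\sigma^k$ for some $\sigma\in(0,1)$, $M>1$. In the parabolic setting this is not a straightforward Calder\'on--Zygmund iteration as in the elliptic case, because cylinders propagate forward in time; one uses instead the parabolic ``growing ink-spots'' or stacked-cube lemma (see \cite{ImbSil13}, \cite{W}, and \cite{Wang}-type arguments), which is tailored so that the contraction of measure at each scale can be iterated despite the lack of symmetry in $t$. The rescalings of \eqref{P+} are again extremal equations of the same type with rescaled $f$ and $\mu$, and the smallness \eqref{wHsmall} is preserved (indeed improved) under zooming in, so Step~1 applies at every scale. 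Summing the geometric series gives $\int_{J_1}u^{\e_0}\,dxdt\le C_0$ whenever $\inf_{J_2}u+\|f\|_{L^p(Q)}\le 1$, for a sufficiently small exponent $\e_0=\e_0(n,\L,\l,p,q)$ with $\sigma M^{\e_0}<1$.

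\textbf{Step 3: Scaling to remove the normalization.} Finally, for general $u$, apply the normalized estimate of Step~2 to $v:=u/(\inf_{J_2}u+\|f\|_{L^p(Q)})$, which is a nonnegative $L^p$-viscosity supersolution of an equation of the same form with inhomogeneous term $f/(\inf_{J_2}u+\|f\|_{L^p(Q)})$ of $L^p$-norm $\le 1$ and the same $\mu$ (so \eqref{wHsmall} still holds); this yields the claimed inequality.

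\textbf{Main obstacle.} The delicate point is Step~2: constructing or invoking the correct parabolic covering lemma so that the measure contraction from Step~1 actually iterates. The forward-in-time propagation of parabolic cylinders means one must carefully choose the geometry of the cubes $J_1$ (near the bottom), $J_2$ (near the top), and the intermediate ``waiting region,'' and keep track of how the estimate on $\{u>M^k\}$ at one scale feeds the next; getting the geometry and the bookkeeping right — rather than any single PDE estimate — is the heart of the argument. A secondary technical nuisance is ensuring the barrier in Step~1 is genuinely an $L^p$-viscosity (equivalently $L^p$-strong) subsolution and that its interaction with the measurable coefficient $\mu$ is legitimate, which is handled by the remarks after Proposition \ref{thm:ABP} and the approximation scheme in the proof of Proposition \ref{prop:exist}.
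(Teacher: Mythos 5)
Your proposal follows essentially the same route as the paper: a barrier (Lemma~\ref{lem:barrier0}) together with the ABP estimate of Proposition~\ref{thm:ABP} produces the initial measure estimate~\eqref{eq:basic}, a parabolic Calder\'on--Zygmund/stacked-cube lemma (Lemma~\ref{CalZyg}, a variant of Lemma~2.4.27 of~\cite{ImbSil13}) is iterated to obtain the power decay~\eqref{Decay}, and a normalization reduces to the case $\inf_{J_2}u\leq 1$ and $\|f\|_{L^p}\leq\e_1$. You correctly identify the stacking geometry (the bookkeeping with the forward cubes $\widetilde L^m$ and the escalating cylinders $N_\ell$) as the heart of the matter; the only minor imprecision is that the smallness~\eqref{wHsmall} is used not to control the ABP constant but to make the term $\|\mu|D\phi|\|_{L^p}$ small in the inhomogeneous part $h=\mu|D\phi|+g+f$ after subtracting the barrier, which is what yields~\eqref{eq:basic}.
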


We remark that the statement of Theorem \ref{thm:weak0} also holds for nonnegative $L^p$-strong supersolutions of
\eqref{P+}.

In order to prove Theorem \ref{thm:weak0}, %and \ref{thm:weak2}, 
we first construct a strong subsolution of an extremal equation. 
To this end, we use the following cubes:
\[
K_1:=(-1, 1)^n\ti(0, 1], \quad\mbox{and}\quad 
K_2:=(-3, 3)^n\ti(1, 10].\]

We recall a barrier function from Lemma 2.4.16 of \cite{ImbSil13} 
(see also \cite{W}). 
We can also construct one by the same manner as in Lemma \ref{lem:barrier} here.

\vspace{1.5cm}

%%%%%%%%%%%%%%%%%%%%
%%%%%%%%%%%%%%%%%%%%     Fig   1
%%%%%%%%%%%%%%%%%%%%

%%%%%%%%%%%%%%%%%%%%
%%%%%%%%%%%%%%%%%%%%     Fig   1
%%%%%%%%%%%%%%%%%%%%

\begin{picture}(450,250)
\put(0,50){\vector(1,0){400}}%\UTF{00C2}\UTF{0089}\UTF{00C2}\UTF{00A1}\UTF{00C2}\UTF{008E}\UTF{00C2}\UTF{00B2}%
\put(200,35){\vector(0,1){250}}%\UTF{00C2}\UTF{008F}c\UTF{00C2}\UTF{008E}\UTF{00C2}\UTF{00B2}%
\put(50,50){\framebox(300,200)}%Q
\put(180,50){\framebox(40,10)}%{\line(0,1){15}}%J1
\put(180,50){\framebox(40,20)}%K1
\put(190,50){\framebox(20,5)}%K_{1/4}
\put(150,70){\framebox(100,180)}%K2
\put(185,45){$\nearrow$}
\put(175,37){$K_{1/4}$}
\put(189,72){$\searrow$}
\put(184,80){$1$}
%\put(240,110){\line(0,1){15}}%J2
%\put(208,110){\line(1,0){32}}%J2
%\put(208,58){\line(0,-1){8}}%J1
%\put(240,58){\line(0,-1){8}}%J1
%\put(208,58){\line(1,0){32}}%J1
\put(180,230){\framebox(40,20)}%J2
%\put(170,65){\framebox(110,60)}%K2
%\put(135,90){\framebox(110,60)}
%\put(180,90){$K_2$}
\put(220,160){$K_2$}%K2
\put(200,220){$\nwarrow$}
\put(215,210){$9$}
\put(200,63){$\swarrow$}
\put(207,76){$\frac12$}
\put(80,90){$Q$}
\put(160,54){$J_1\rightarrow$}
%\put(225,45){$\nwarrow$}
\put(220,57){$\} K_1$}
\put(218,40){$1$}
\put(160,230){$J_2\rightarrow$}
\put(203,37){$0$}
\put(380,42){$x$}
\put(248,255){$3$}
\put(190,270){$t$}
\put(345,40){$10$}
\put(200,253){$\swarrow$}
\put(210,259){$10$}
\put(95,16){Figure $1$. The cubes $J_1$, $J_2$, $K_1$, $K_2$, $K_{1/4}$.}
\end{picture}

%%%%%%%%%%%%%%     LEMMA  3.  2.   Barrier   0

\begin{lem}
\label{lem:barrier0}
There exist a nonnegative function $\phi\in %C(\ol{Q})\cap    CHANGED 
C^{2, 1}(\ol{Q})$ 
 and a function $g\in C(\ol{Q})$ such that    
\begin{eqnarray*}
\le\{ \begin{array}{rcll}
\phi_t +\P^+(D^2\phi)&\leq& g(x, t) &\mathrm{in} \ Q, \\ 
\phi&\geq&2 &\mathrm{in} \ K_2,\\ 
\phi&=&0 &\mathrm{on} \ \p_p Q,\\
\supp\,g&\subset& K_1.&
\end{array} \ri.
\end{eqnarray*}
\end{lem}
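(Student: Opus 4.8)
The plan is to construct $\phi$ explicitly as a suitably truncated and time-modulated version of a classical Krylov--Safonov type barrier, so that it is nonnegative, vanishes on $\partial_p Q$, exceeds $2$ on $K_2$, and whose defect $g := \phi_t + \P^+(D^2\phi)$ is supported in the small cube $K_1$. First I would build a smooth spatial profile $\Psi(x)$ on the cube $(-10,10)^n$ (or a smooth domain close to it, as the paper notes) which is $C^{1,1}$, nonnegative, equal to $0$ on $\partial\Omega$, and large, say $\geq 3$, on $(-3,3)^n$; the classical choice is $\Psi(x) = A\bigl(R^{-\beta} - |x|^{-\beta}\bigr)$ type expressions or a polynomial like $A(100 - |x|^2)$ arranged so that $\P^+(D^2\Psi)$ is controlled. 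Because we only need $\P^+(D^2\phi) \leq g$ with $g$ \emph{arbitrary} continuous function supported in $K_1$, we are quite free: outside $K_1$ we just need $\phi_t + \P^+(D^2\phi) \leq 0$.

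Next I would introduce a time cutoff. Let $\theta \in C^\infty([0,10])$ be nondecreasing with $\theta(t) = 0$ for $t$ near $0$ (so that $\phi = 0$ on $\Omega \times \{0\}$), $\theta(t) = 1$ for $t \geq 1$, and $0 \leq \theta \leq 1$. Set $\phi(x,t) := \theta(t)\,\Psi(x)$, after normalizing $\Psi \geq 2$ on $(-3,3)^n$ so that $\phi \geq 2$ on $K_2 = (-3,3)^n\times(1,10]$. Then $\phi_t = \theta'(t)\Psi(x) \geq 0$ and $D^2\phi = \theta(t)D^2\Psi$, so $\phi_t + \P^+(D^2\phi) = \theta'(t)\Psi(x) + \theta(t)\P^+(D^2\Psi(x))$. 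For $t \geq 1$ this equals $\P^+(D^2\Psi(x))$, which I arrange to be $\leq 0$ on all of $\Omega$ by choosing $\Psi$ concave-like (e.g. $\Psi$ a negative-definite-Hessian function such as $A - B|x|^2$, suitably modified near $\partial\Omega$ to vanish there while keeping $\P^+(D^2\Psi)\leq 0$ outside a small set). The only region where positivity of $\phi_t + \P^+(D^2\phi)$ could occur is where $\theta' \neq 0$, i.e. $t$ near the transition interval which I can take inside $(0,1]$, combined with the spatial region where $\Psi$ is large; to confine the support of $g$ to $K_1 = (-1,1)^n\times(0,1]$ I would instead multiply by a spatial cutoff in the term that causes trouble, or more cleanly, choose $\Psi$ so that the combination $\theta'(t)\Psi(x) + \theta(t)\P^+(D^2\Psi(x))$ is $\leq 0$ wherever $x \notin (-1,1)^n$, using that $\P^+(D^2\Psi)$ is very negative for $|x|$ away from $0$ and that $\theta'$ is bounded. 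Then $g := \bigl(\phi_t + \P^+(D^2\phi)\bigr)^+$ extended by the actual value where positive has support in $K_1$, and setting $g$ to a continuous nonnegative function dominating $\phi_t+\P^+(D^2\phi)$ and supported in $K_1$ finishes the construction; note $g \in C(\ol Q)$ since $\phi \in C^{2,1}(\ol Q)$.

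The main obstacle I anticipate is the bookkeeping of getting the defect support \emph{exactly} inside $K_1$ rather than merely in some cylinder $(-r,r)^n \times (0,1]$ with $r$ uncontrolled: this forces a careful choice of the constants $A,B,\beta$ in $\Psi$ relative to $\|\theta'\|_\infty$, using ellipticity constants $\lambda,\Lambda$ to make $\P^+(D^2\Psi(x)) \leq -\|\theta'\|_\infty\,\sup\Psi$ for $|x|_\infty \geq 1$. A secondary technical point is reconciling $\Psi \in C^{1,1}$ vanishing on $\partial\Omega$ with $\Psi$ having the desired Hessian sign; as the paper remarks, one either works on a $C^{1,1}$ domain close to the cube or accepts the cube boundary as $C^{1,1}$ for presentation purposes, and near $\partial\Omega$ one may allow $\P^+(D^2\Psi) > 0$ only in a thin layer provided $\theta'$ has already been supported away — but since $\theta'$ is supported in $t \leq 1$ where we only need the estimate for $|x|_\infty < 1$, the boundary layer causes no conflict. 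I would also double-check the continuity and regularity: $\phi = \theta\,\Psi$ is $C^{2,1}$ on $\ol Q$ provided $\theta \in C^\infty$ and $\Psi \in C^2$, and the boundary condition $\phi = 0$ on $\partial_p Q$ splits into $\phi|_{t=0} = \theta(0)\Psi = 0$ and $\phi|_{\partial\Omega} = \theta(t)\Psi|_{\partial\Omega} = 0$, both immediate from the choices above.
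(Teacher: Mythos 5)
Your ansatz $\phi(x,t) = \theta(t)\Psi(x)$, with $\theta$ a time cutoff that is identically $1$ for $t\geq 1$ and $\Psi$ a fixed nonnegative spatial profile vanishing on $\partial\Omega$ with $\Psi\geq 2$ on $(-3,3)^n$, cannot meet the constraint $\supp g\subset K_1$. The point is what happens for $t>1$: there $\theta\equiv 1$, $\theta'\equiv 0$, so $\phi_t+\P^+(D^2\phi)=\P^+(D^2\Psi(x))$, and since $K_1\subset\{t\leq 1\}$ this quantity must be $\leq 0$ for every $x\in\Omega$. But $\P^+(D^2\Psi)\leq 0$ implies in particular (take $A=\lambda I$ in the definition $\P^+(X)=\max\{-\mathrm{Tr}(AX)\}$) that $\Delta\Psi\geq 0$, i.e.\ $\Psi$ is subharmonic, so $\max_{\overline\Omega}\Psi=\max_{\partial\Omega}\Psi=0$, contradicting $\Psi\geq 2$ on $(-3,3)^n$. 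There is also a sign slip compounding this: with the paper's convention $\P^+(X)=\max\{-\mathrm{Tr}(AX):\lambda I\leq A\leq\Lambda I\}$, a \emph{concave} $\Psi$ (negative Hessian) gives $\P^+(D^2\Psi)\geq 0$, not $\leq 0$, so your suggestion ``choose $\Psi$ concave-like so that $\P^+(D^2\Psi)\leq 0$'' is backwards, and flipping to a convex $\Psi$ is incompatible with the boundary-zero/interior-large requirement.

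The missing idea is that the barrier must be genuinely non-separable in $(x,t)$: its profile has to spread outward in space as $t$ increases (so that it can be small and concentrated where the source $g$ lives in $K_1$, yet already $\geq 2$ on all of $K_2$ by time $t=1$). This is exactly what the modified heat kernel of Wang / Imbert--Silvestre (Lemma 2.4.16 of [ImbSil13], which the paper cites) achieves, and also what the paper's own construction in Lemma~\ref{lem:barrier} does: one solves $\psi_t+\P^+(D^2\psi)=0$ (resp.\ with the $\mu|D\psi|$ term) with a bump $\xi$ on $\partial_pQ$ supported in $K_{1/4}$, invokes the strong maximum principle / weak Harnack to get $\psi\geq\sigma>0$ on $K_2$, and then sets $\phi=\hat M\eta\psi$ for a spatial-temporal cutoff $\eta$ that vanishes on $K_{1/4}$ and equals $1$ outside $K_1$, so the resulting defect $g$ is automatically supported in $K_1$. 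A purely multiplicative time cutoff applied to a fixed spatial profile cannot reproduce this spreading behavior.
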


Letting $K_1$ as above, %:=(-r,r)^n\ti (0,r^2)$, 
we denote by $\C_1$ 
the set of all $2^{n+2}$ cubes $(-1+i_1,i_1)\ti\cdots 
\ti (-1+i_n,i_n)\ti 
(\fr{j}{4},\fr{j+1}{4}]$ for $i_k=0,1$ ($k=1,2,\ldots,n$), and $j=0,1,2,3$. 
For each cube $L\in\C_1$, we divide it into $2^{n+2}$ cubes. 
We denote by $\C_2$ the set of such cubes constructed by the same procedure from each cube $L\in\C_1$. 
Inductively, we construct $\C_k$ whose elements have length $2^{-k+1}$ in each space direction and $4^{-k}$ in time. 
We call $L\in \cup_{k=1}^\infty \C_k$ a dyadic cube of $K_1$. 
When $L\in\C_k$ is constructed from an element of $\C_{k-1}$ by the above procedure, we denote by $\tilde L\in\C_{k-1}$ the predecessor of $L$.

For $L\in\C_k$ and its predecessor $\tilde L:=J\ti (\tau ,\tau+\frac{1}{4^{k-1}}]\in \C_{k-1}$  
for a cube $J=(\frac{a_1}{2^{k-2}},\frac{a_1+1}{2^{k-2}})\ti \cdots \ti 
(\frac{a_n}{2^{k-2}},\frac{a_n+1}{2^{k-2}})$ with some integers 
$a_1,\ldots,a_n$, and  $\tau\in [0,1)$,   
we define  
\begin{equation}\label{referee1}\tilde{L}^m:= J\ti  \le(\tau +4^{-k+1} , 
\tau +4^{-k+1}(m+1)\ri],
\end{equation}
which is the union of $m$ cubes of the translated predecessor 
in the ``future'' direction. 
%Let us call $\tilde L$ and $\tilde L^m$ the father and $m$ uncles of $L$, respectively. 
%For any given cube $L=(x_0,t_0)+K_1$, %(-R,R)^n\ti (\alpha,\beta)$ (for $R>0,\alpha,\beta\in\R$), 
%we call $L$ a dyadic cube of $K$ if $L$ is constructed by this procedure. 

We define $\C:=\cup_{k=1}^\infty \C_k$. 
Moreover, for $m\in\N$, we define $\C (m):=\{ L\in \C \ | \ 
\tilde L^m\subset Q\}$. 
Notice that when $1\leq m\leq 36$, we have $\C(m)=\C$.

We recall a parabolic version of the Calder\'on-Zygmund decomposition, 
which is a modification of Lemma 2.4.27 of \cite{ImbSil13}.
Since fine cubes are needed in the proof of Lemma 2.4.27 of \cite{ImbSil13}, we can follow 
the argument there to prove the next lemma. 

%%%%%%%%%%%% 
%%%%%%%%%%%%     Parabolic cube decomposition      Lemma    3.    3. 
%%%%%%%%%%%%

\begin{lem}%(Lemma ??? in \cite{ImbSil13})
\label{CalZyg}
Let $m\geq1$ be an integer, and $K_1\subset\R^{n+1}$ be as above. 
Let measurable sets $A\subset B\subset K_1$ and $\s\in (0, 1)$ satisfy
\[\le\{\begin{array}{ll}
%\begin{description}
(i)&%\item[(a)] $
|A|\leq \s|K_1|,\\%$$;$
(ii)&%\item[(b)] 
\mbox{if }L\in \C(m)%\mbox{ is a dyadic cube of }K_1
\mbox{ is such that }
|A\cap L|> \s|L|,\mbox{ then }\tilde{L}^m\subset B,
\end{array}\ri.%\end{description}
\]
where $\tilde{L}^m$ is from $(\ref{referee1})$. 
Then, it follows that
\[
|A| \leq \s %(1-\theta)
\frac{m+1}{m}|B|.
\]
\end{lem}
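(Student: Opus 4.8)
\textbf{Proof plan for Lemma \ref{CalZyg}.} The plan is to run the standard Calder\'on--Zygmund stopping-time argument adapted to the parabolic dyadic structure, with the twist that each ``stopped'' cube contributes not only itself but its $m$-fold future translate $\tilde L^m$ to the covering of $B$. First I would decompose $A$ using the dyadic cubes of $K_1$: starting from $\C_1$, I repeatedly subdivide, keeping a cube $L$ only if $|A\cap L|\leq\sigma|L|$; I stop subdividing at the first generation where $|A\cap L|>\sigma|L|$. Hypothesis (i), $|A|\leq\sigma|K_1|$, guarantees that $K_1$ itself is not stopped, so every stopped cube $L$ has a well-defined predecessor $\tilde L$ with $|A\cap\tilde L|\leq\sigma|\tilde L|$; in particular the stopped cubes are pairwise disjoint (being a subfamily of an antichain in the dyadic tree) and, by Lebesgue differentiation applied to the nested shrinking cubes, $A$ is contained (up to a null set) in the union $\bigcup_j L_j$ of the stopped cubes.

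Next I would invoke hypothesis (ii): each stopped cube $L_j$ satisfies $|A\cap L_j|>\sigma|L_j|$, so $\tilde L_j^m\subset B$. Then I estimate
\[
|A|=\sum_j |A\cap L_j|\leq \sum_j |L_j|.
\]
The key geometric observation is the relation between $|L_j|$ and $|\tilde L_j^m|$. By the definition \eqref{referee1}, $\tilde L_j^m$ is the union of $m$ translated copies of the predecessor $\tilde L_j$, so $|\tilde L_j^m|=m|\tilde L_j|$, and since $\tilde L_j$ is a dyadic predecessor of $L_j$ in the parabolic scaling (side length doubled, time length quadrupled), $|\tilde L_j|=2^{n}\cdot 4\,|L_j|=2^{n+2}|L_j|$; hence $|L_j|=\tfrac{1}{m\,2^{n+2}}|\tilde L_j^m|$. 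Wait --- this gives a factor depending on $n$, which is not what the lemma claims, so the real point must be a different, sharper bookkeeping: I should instead sum $|\tilde L_j^m|$ directly and control the overlaps of the sets $\tilde L_j^m$. The correct route is to note that each $\tilde L_j^m$ contains $L_j$ together with $L_j$'s ``future siblings'' at the same level, and to bound $\sum_j|L_j|$ by a controlled-overlap sum of $|\tilde L_j^m|/(m+1)$ — more precisely, since $\tilde L_j$ contains $m+1$ level-$k$ time-slices worth of mass forming $\tilde L_j^m\cup(\text{the slab containing }L_j)$, one gets $\frac{m+1}{m}$ rather than a dimensional constant. I would follow the counting in the proof of Lemma 2.4.27 of \cite{ImbSil13} verbatim here, tracking how a point of $B$ can be covered by the $\tilde L_j^m$: the disjointness of the stopped cubes $L_j$ together with the ``one step into the future'' structure limits the multiplicity, and the arithmetic collapses to the factor $\frac{m+1}{m}$.

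The main obstacle, and the step I expect to require genuine care, is precisely this overlap/multiplicity estimate for the family $\{\tilde L_j^m\}$: unlike the stopped cubes $L_j$ themselves, their future translates need not be disjoint and need not lie inside $K_1$, so one must argue combinatorially that the weighted sum $\sum_j|L_j|$ is bounded by $\sigma\frac{m+1}{m}|B|$ rather than merely by $\sigma|B|$ times a dimensional constant. I would handle this by grouping the stopped cubes according to their predecessors: for a fixed predecessor $\tilde L$, the stopped children $L_j$ with $\tilde L_j=\tilde L$ are disjoint and sit inside $\tilde L$, which occupies the ``present'' time-slab, while $\tilde L^m$ occupies the next $m$ slabs; comparing the mass of $A$ in the present slab with the mass of $B$ in the union of all $m+1$ slabs, and summing over the (disjoint) present slabs, yields the stated inequality. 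The hypothesis that the cubes be fine enough — i.e. that we may descend to arbitrarily small generations — is what makes the Lebesgue-point step and the ``$\tilde L^m\subset Q$'' bookkeeping (via $\C(m)$) legitimate, exactly as in \cite{ImbSil13}.
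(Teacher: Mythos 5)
Your overall strategy --- a Calder\'on--Zygmund stopping-time decomposition, keeping track of how the ``future'' sets $\tilde L^m$ propagate into $B$ --- is the right one, and it is exactly what the paper itself does: the text gives no proof but simply points to Lemma 2.4.27 of \cite{ImbSil13} and says the argument there can be followed. You also correctly identify where the difficulty is: if one simply compares $|L_j|$ with $|\tilde L_j^m|$ one picks up a factor $m\,2^{n+2}$ rather than the sharp $\frac{m+1}{m}$, so a more careful overlap/multiplicity estimate is needed after grouping by predecessors, giving
\[
|A|=\sum_j |A\cap L_j|\leq \sum_{\tilde L}|A\cap\tilde L|\leq \sigma\sum_{\tilde L}|\tilde L|,
\]
where the last sum is over the \emph{distinct} predecessors.

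However, the two concrete assertions you make to close this gap are both false, and they are precisely the delicate part. First, you write that ``each $\tilde L_j^m$ contains $L_j$ together with $L_j$'s future siblings.'' By the definition \eqref{referee1}, $\tilde L^m = J\times(\tau+4^{-k+1},\,\tau+4^{-k+1}(m+1)]$ lies entirely \emph{after} the predecessor $\tilde L=J\times(\tau,\tau+4^{-k+1}]$, so $\tilde L^m$ is disjoint from $\tilde L$ and hence from $L_j\subset\tilde L$; it contains no present-time cube at all. Second, you propose to ``sum over the (disjoint) present slabs,'' i.e.\ over the predecessors. But the predecessors of maximal stopped cubes need not be disjoint: if $L_{j'}\in\C_k$ is stopped and a sibling $C$ of $L_{j'}$ (so $C\subset\tilde L_{j'}$, with density $\leq\sigma$) has a stopped descendant $L_j\in\C_{k+1}$, then $\tilde L_j=C\subsetneq\tilde L_{j'}$, so the predecessors are nested and their future translates $\tilde L_j^m$, $\tilde L_{j'}^m$ can overlap. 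Controlling the resulting multiplicity of the family $\{\tilde L^m\}$ so that $\sum_{\tilde L}|\tilde L|\leq\frac{m+1}{m}|B|$ is exactly the nontrivial combinatorial point of Lemma 2.4.27 in \cite{ImbSil13}, and your sketch leaves it unresolved; you cannot get the clean constant $\frac{m+1}{m}$ by the disjointness you assume. Since the step you defer to \cite{ImbSil13} is the entire content of the lemma, the proposal as written has a genuine gap there, even though the surrounding scaffolding (stopping time, Lebesgue differentiation, invocation of hypothesis (ii)) is correct.
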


%%%%%%%%%%%%%%%
%%%%%%%%%%%%%%%%%%%    Proof of weak Harnack, Theorem 3.1
%%%%%%%%%%%%%%%%%%%%

\begin{proof}[Proof of Theorem \ref{thm:weak0}]
%We recall  cubes $Q:=(-10,10)^n\ti (0,10]$, $J_1:=(-1, 1)^n\ti (0, \frac12 ]\subset K_1$, and $J_2:=(-1, 1)^n\ti (9, 10]$. 
%We will first consider the case $(\ref{Apq1})$. 
For $\e_1>0$, which will be fixed later, we set
\[\tilde{u}(x, t)=N_0u(x,t),\]
where $N_0=\le(\inf_{J_2} u +\e_1^{-1}\|f\|_{L^{p}(Q)} +\eta\ri)^{-1}$ 
for $\eta>0$, which will be sent to $0$ at the end of the proof. 
By considering $\tilde u$ instead of $u$, 
it is enough to show that there are $\e_0,C_0>0$ such that 
\begin{equation}\label{eq:integra}
\le(\int_{J_1} u^{\e_0} \,dxdt\ri)^{\fr{1}{\e_0}}\leq C_0 
\end{equation}
under the assumptions 
\begin{equation}\label{eq:Reduction}
\low{J_2} \inf \, u\leq 1,\quad\mbox{and}\quad \| f\|_{L^{p}(Q)}\leq \e_1.
\end{equation}

Let $\phi$ be the function in Lemma \ref{lem:barrier0}. 
By letting $w:=\phi-u$, it is immediate to see that $w$ is an $L^p$-viscosity subsolution of 
\[w_t +\P^-(D^2w)-\mu |Dw| -h=0 \quad \mbox{in } Q,%_1(0, 1).
\]
where $h:=\mu|D\phi|+g +f$. %=:h%We define $h:=g -f +\mu^+|D\phi|$. 
In view of Proposition \ref{thm:ABP}, we have  
\[\under{Q}%_1(0,1)}
{\sup}\, w\leq 
%\under{ \p_p \hat Q}%_1(0,1) }
%{ \sup }\, w^+ +C%\exp\le(C\|\mu^+\|_{L^{n+1}(Q))}\ri)
C\|h\|_{L^{p}(Q_{+}[w])}.
\] 
Hence, by recalling supp $g\subset K_1$ in Lemma 
\ref{lem:barrier0}, 
it is easy to verify that this inequality implies 
\[%\begin{equation}\label{eq:UseABP}
1\leq \low{J_2}\sup \, w\leq C\le(\|g\|_{L^{p}( Q_{+}[w])}+\e_1+\d_0\|D\phi\|_{L^\infty(Q)}\ri) .
\] 
Thus, since $\phi\in C^{2,1}(\overline Q)$, for  fixed $\e_1, \d_0>0$, there is $\theta\in (0,1)$ 
such that $|\{ (x,t)\in K_1 \ | \ w(x,t)>0\}| \geq \theta |K_1|$. 
Hence, setting $M:=\under{K_1}{\sup}\,\phi$, ($M\geq 2$),  we have 
\begin{equation}\label{eq:basic}
|\{(x, t)\in K_1\ |\ u(x, t)\geq  M\}|\leq (1-\theta)|K_1|.
\end{equation}

%Under assumption (\ref{Apq2}),  since Proposition \ref{thm:ABP} $(ii)$ yields (\ref{eq:basic}) 
%by replacing $n+1$ by $p$, we can obtain the above estimate for some $\theta\in (0,1)$. 

%%%%%%%%%%%%%%%
%%%%%%%%%%%%%%%   Notation   m  \delta  S    C_1   k_0   C_2  
%%%%%%%%%%%%%%%

We next fix $\delta\in (1-\theta,1)$ 
and select large $m\in\N$ such that 
\begin{equation}\label{eq:Fixm}
1-\theta<(1-\theta)\fr{m+1}{m}\leq \delta<1.%,\quad 0<\fr{1}{m}\leq \delta, \quad\mbox{and}\quad \fr{1}{2\sqrt{m+1}}\leq \fr13.
\end{equation}
Letting $J_1^{k}:=(-1,1)^n\ti (0,\fr{1}{2}+
\fr{m+1}{9^{mk-3}(9^m-1)}]$ for $k\geq 1$, 
we note that 
\[J_1^{k+1}\subset J_1^k\quad (k\in\N),\quad \mbox{and}\quad 
\lim_{k\to\infty}J_1^k=J_1 .\]
We choose $k_0\in\N$ such that
\[
\fr{m+1}{9^{mk_0-3}(9^m-1)}<\fr{1}{2}\quad 
(i.e. \quad J_1^k\subset K_1\quad \mbox{for } k\geq k_0).
\]
Finally, putting 
$\hat C_0:=|J_1^{k_0}|\le(m(1-\theta)^{-1}(m+1)^{-1}\ri)^{k_0},
$
by our choice of $\d$ (i.e.  \eqref{eq:Fixm}), we observe
\begin{equation}\label{3k0}
|J_1^{k_0}|\leq \hat C_0\delta^{k_0}.
\end{equation} 

We will show that 
%that there exists a constant $C_2=C_2(n,\L/\l, q)>0$ such that   
\begin{equation}\label{eq:dis}
|\{(x, t)\in J_1^k\ |\ u(x, t)\geq M^{km} \}|\leq \hat C_0\delta^{k} \quad (\forall k\geq k_0).
\end{equation}
Notice that (\ref{3k0}) yields (\ref{eq:dis}) for $k=k_0$. 

%%%%%%%%%%%%%%%%%
%%%%%%%%%%%%%%%%%    Check hypotheses in Lemma 
%%%%%%%%%%%%%%%%%

For any fixed $k\geq k_0+1$, 
we suppose that $(\ref{eq:dis})$ holds for $k-1$. 
Set %Let us define the sets 
\[
A=\{(x, t)\in J_1^{k}\ |\ u(x, t)\geq M^{km} \} \mbox{ and } 
B=\{(x, t)\in J_1^{k-1} \ |\ u(x, t)\geq M^{(k-1)m} \}. 
\]
It is immediate to see that $A\subset B\subset K_1$, 
and $|A|\leq\le(1-\theta\ri)|K_1|$ from $(\ref{eq:basic})$ because 
$A\subset \{ (x, t)\in K_1 \ | \ u(x,t)\geq M\}$. 

If the hypotheses in Lemma \ref{CalZyg} are satisfied for $A$, $B$ and $\s=1-\theta$, then using $|B|\leq \hat C_0\delta^{k-1}$, we have 
\[|A|\leq(1-\theta)\fr{m+1}{m}\hat C_0\delta^{k-1}.\]
Hence, (\ref{eq:dis}) holds for any $k\geq k_0$ by our choice of $\delta$ and $m$ in \eqref{eq:Fixm}. 
Therefore, the standard argument implies 
\begin{equation}\label{Decay}
|\{ (x,t)\in J_1 \ | \ u(x,t)\geq s\}|
\leq A_0s^{-\b_0}\quad (s>0),
\end{equation}
where $A_0=\hat C_0\d^{-1}$ and $\b_0:=-\fr{\log \d}{m\log M}>0$. %%%%%%%% CHANGED 
We thus obtain \eqref{eq:integra} when $\e_0\in (0,\b_0)$.

In order to check $(ii)$ in Lemma \ref{CalZyg}, we take a dyadic cube 
$L\in \C (m)$ such that 
\begin{equation}\label{eq:AL}
|A\cap L|> (1-\theta)|L|.
\end{equation}
We can find $j\in\N$ and $(x_0,t_0)\in \ol K_1$ such that
\[L=(x_0, t_0)+\le( -2^{-j} , 2^{-j}\ri)^n\ti\le(0, 2^{-2j}\ri]. 
\]
%%%%%%%%%%%%%%%%      CLAIM 

We claim that if \eqref{eq:AL} holds 
%$|\{ (x,t)\in L \ | \ u(x,t)\geq  M^{km}\}|> (1-\theta)|L|$ for $M>1$, 
then 
\begin{equation}\label{claim}
\low{N_\ell\cap \{\R^n\ti (0,10]\} }\inf u%\{ u(x,t) \ | \ (x,t)\in L_\ell, t\in (0, 1)\}
> M^{km-\ell}\geq 1\quad\mbox{for }
\ell\in\{ 1, 2, \cdots, km\}.
\end{equation}
Here, we set 
$%\begin{align*}
N_1:=(x_0, t_0+\fr{1}{4^{j}})+\fr{3}{2^{j}}K_1, \cdots, 
N_\ell:=(x_0, t_0+\fr{9^\ell-1}{8}\cdot\fr{1}{4^{j}})+\fr{3^\ell}{2^{j}}K_1,\cdots$ (See Figure 2),  
%$%\end{align*}
where for $\s>0$, 
\[\s K_1:=(-\s, \s)^n\ti (0,\s^2].\]
We will prove this claim later. 
\vspace{1.5cm}
%%%%%%%%%%%%%%%%%%%%%%       Fig  2

\begin{picture}(450,250)
\put(0,50){\vector(1,0){400}}%\UTF{00C2}\UTF{0089}\UTF{00C2}\UTF{00A1}\UTF{00C2}\UTF{008E}\UTF{00C2}\UTF{00B2}%
\put(200,35){\vector(0,1){250}}%\UTF{00C2}\UTF{008F}c\UTF{00C2}\UTF{008E}\UTF{00C2}\UTF{00B2}%
%\put(50,50){\framebox(300,200)}%Q

\put(197,50){\framebox(6,2)}% L
\put(191,52){\framebox(18,12)}% L1
\put(173,64){\framebox(54,72)}% L2
\put(119,136){\line(1,0){162}}% L3
\put(119,136){\line(0,1){140}}
\put(281,136){\line(0,1){140}}
\put(203,37){$(x_0,t_0)$}
\put(350,37){$x$}
%\qbezier(140,276)(0,-180)(260,276)
%\put(248,255){$3$}
\put(190,270){$t$}
%\put(345,40){$10$}
%\put(200,253){$\swarrow$}
\put(175,41){$L\nearrow$}
\put(205,53){$\leftarrow N_1$}
\put(210,100){$N_2$}
\put(230,200){$N_3$}
\put(120,10){Figure $2$. The cubes $L$, $N_\ell$.}
\end{picture}

One direct consequence of this claim for $\ell=1,2,\ldots, m$ is the following assertion: under \eqref{eq:AL}, it follows that  
\begin{equation}\label{olLmSubset}
u>M^{(k-1)m}\quad \mbox{in }\bigcup_{\ell=1}^m N_\ell \cap \{\R^n\ti (0,10]\}.
\end{equation}

%%%%%%%%%%%%%%%%%
It is obvious from the definition that    
%\begin{lem}
\begin{equation}\label{lem:copy}%(Corollary 2.4.26 in \cite{ImbSil13})
\widetilde{L}^k\subset \G_k\quad \mbox{for }k\in\N,
\end{equation}
where $
\G_k:=\bigcup_{\ell=1}^kN_\ell$ 
%\over{k}{\low{\ell=1}{\bigcup}} \, L_\ell$ 
for $k\in\N$. 
We also write $\G_\infty =\bigcup_{\ell\in\N}N_\ell
$.

We easily verify the following inclusions: 
\[%\label{parabolas}
(x_0,t_0)+S^-_{\fr{1}{4^j},%\fr{1}{2^j},
\infty}\subset \G_\infty%\bigcup_{\ell=1}^\infty L_\ell
\subset 
(x_0,t_0)+S^+_{\fr{1}{4^j},\infty},
\]
where for $0\leq \a<\b\leq\infty$, paraboloid type domains $S^\pm_{\a,\b} $ are given by
\[
S^-_{\a,\b}:=\le\{ (x, t)\in\R^n\ti (\a,\b] \ \le| \ t>2^{-3} (
9|x|_\infty^2-4^{-j})\ri.\ri\},
\]
\[
S^+_{\a,\b}:=
\le\{ (x, t)\in\R^n\ti (\a,\b] \ \le| \ t>2^{-3} (
|x|_\infty^2-4^{-j})\ri.\ri\}.
\]
Here, $|x|_\infty :=\max\{ |x_1|,\ldots ,|x_n|\}$ for $x=(x_1,\ldots,x_n)\in\R^n$. 
For extreme cases when $(x_0,t_0)=(\hat x,0)$ or $(x_0,t_0)=(\hat x,1)$, where $\hat x=(1,0,\ldots,0)$, 
we observe \[
J_2\subset (\hat x,1)+S^-_{0,9}\quad\mbox{and}\quad%(-s,s)^n\ti\{ 2^{-3}(9s^2-2^{-2j})\}
 (\hat x,0)+S^+_{0,%\fr{3}{2}, 
 10}%(-s,s)^n\ti \{ 2^{-3}(s^2-2^{-2j})\}
\subset Q. 
\]
%where $S^\pm_{\a,\b}:=S^\pm_\a\cap \{ \R^n \ti (0,\b]\}$, 
Hence, we obtain 
\begin{equation}\label{EqGinQ}
J_2\subset \G_\infty%\le(\overset{\infty}{\under{\ell=1}{\bigcup}} L_\ell\ri)
\cap 
\{\R^n\ti(0, 10]\}
\subset Q.
\end{equation}
%We refer to Lemma 2.4.23 in \cite{ImbSil13} for its proof. 

Now, assuming (\ref{eq:AL}), we will prove 
$\tilde{L}^m\subset B$. 
To this end, by \eqref{olLmSubset} and \eqref{lem:copy}, 
it is enough to show that
\[
\tilde L^m\subset J^{k-1}_1.
\]

On the other hand, since $(\ref{eq:AL})$ yields 
\[|J_1^{k}\cap L|> (1-\theta)|L|> 0,
\]
we have $(-1,1)^n\ti ( 0,\frac{1}{2}+\frac{m+1}{9^{mk-3}(9^m-1)}
]\cap  L\neq \emptyset$. 
By the definition of $\widetilde{L}^m$, we have
\begin{equation}
\label{eq:Lm3}
\widetilde{L}^m\subset (-1, 1)^n\ti\le(0, \fr{1}{2} +\fr{m+1}{9^{mk-3}(9^m-1)} +
\fr{m+1}{4^{j-1}}\ri].
\end{equation}
Setting $\ell^*=\min\{ k\in\N \ | \ L_{k+1}\cap \R^n\ti(0, 10]= \emptyset\}$, %from Lemma \ref{lem:stacks},
 we have 
\[
J_2\subset \G_{\ell^\ast}%\le(\over{\ell^\ast}{\low{\ell=1}{\bigcup}} L_\ell\ri)
\cap\{\R^n\ti(0, 10]\}. 
\]
Since $\low{J_2}{\inf}\, u\leq 1$, by \eqref{claim} again for $\ell=1,2,\ldots,km$, 
we thus have  
\[
\low{\G_{km}%\le(\overset{km}{\low{\ell=1}{\bigcup}} L_\ell\ri)
\cap \{\R^n\ti(0, 10]\}}{\inf}\, u > 1\geq 
\low{\G_{\ell^\ast}%\le(\overset{\ell^*}{\low{\ell=1}{\bigcup}} L_\ell\ri)
\cap \{\R^n\ti(0, 10]\}}{\inf}\, u
\] 
which implies $km< \ell^\ast$. 
Hence, noting
\[t_0 +2^{-2j-3}\le( 9^{\ell^\ast}-1\ri)\leq 10,  
\] 
we have  
\[
2^{-2j}\leq  \fr{80}{9^{km}-1}
\] 
which, together with  $(\ref{eq:Lm3})$, yields
\[
\tilde{L}^m\subset (-1, 1)^n\ti\le(0, \fr{1}{2} +\fr{m+1}{9^{mk-3}(9^m-1)} +
\frac{320(m+1)}{9^{mk}-1}\ri].
\]
Therefore, noting
\[
\fr{1}{9^{mk-3}(9^m-1)}+\fr{320}{9^{mk}-1}\leq \fr{1}{9^{m(k-1)-3}(9^m-1)}
,\]
we can apply Lemma \ref{CalZyg} to conclude the proof.

%%%%%%%%%%%%%
%%%%%%%%%%%%%     proof of     claim
%%%%%%%%%%%%%

It remains to show that \eqref{claim} holds under \eqref{eq:AL}. 

By setting $v(x, t)= M^{1-km}u(x_0 +\fr{1}{2^j} x, t_0 +\fr{1}{4^j}t)$,  
 \eqref{eq:AL} implies 
\begin{equation}
\label{eq:contra}
|\{(x,t)\in K_1 \ | \ v(x,t)\geq M \}|> (1-\theta)|K_1|. 
\end{equation}
However, we note that $v$ is a nonnegative $L^p$-viscosity supersolution of  
\[v_t +\P^+(D^2v) +\widetilde{\mu}|Dv|-\widetilde{f}=0\quad\mbox{in }Q, 
\]
where $\tilde{\mu}(x, t)= \fr{1}{2^j}\mu(x_0 +\fr{1}{2^j} x, t_0 +\fr{1}{4^j}t)$, $\tilde{f}(x, t)=\fr{1}{M^{km-1}4^j}f(x_0 +\fr{1}{2^j} x, t_0 +\fr{1}{4^j}t)$. 
We notice that 
\[\|\tilde{f}\|_{L^{p}(Q)}\leq\e_1, \quad\mbox{and}\quad\le\|\tilde{\mu}\ri\|_{L^q(Q)} \leq \|\mu\|_{L^q(Q)}
\]
because $q>n+2$ and $p>\fr{n+2}{2}$.  
Thus, if $\under{K_2}{\inf}\,v\leq1$ holds, then  the same argument to obtain (\ref{eq:basic}) yields 
\begin{equation}\label{eq:contra1}
|\{(x,t)\in K_1 \ | \ v(x,t)\geq M \}|\leq (1-\theta)|K_1|,
\end{equation} 
which contradicts (\ref{eq:contra}). 
Hence, we have $v>1$ in $K_2$, namely, (\ref{claim}) holds for $\ell=1$ 
by the definition.

Next, for $\ell\geq2$, we suppose that (\ref{claim}) holds for $\ell-1$. 
We may suppose that $N_{\ell-1}\subset \R^n\ti(0, 10]$ since otherwise 
$N_\ell \cap \{\R^n\ti (0,10]\}=\emptyset$, which concludes \eqref{claim} for $\ell$. %we are done. 
Thus,  since   
\[\under{N_{\ell-1}}{\inf}\, u= \low{N_{\ell-1}\cap \{\R^n\ti(0, 10]\}}{\inf}\,u> M^{km-\ell+1}, 
\]
we have a trivial inequality
\begin{equation}\label{eq:L_ell-1}
\le|\le\{ (x,t)\in N_{\ell-1} \ | \ u(x,t)\geq M^{km-\ell+1}\ri\}\ri|= |N_{\ell-1}|> (1-\theta)|N_{\ell-1}|.
\end{equation}

Set  $w(x, t):= \fr{1}{M^{km-\ell}}u(x_0 +\fr{3^{\ell-1}}{2^{j}} x, t_0 +\fr{9^{\ell-1}-1}{8\cdot 4^{j}} +\fr{9^{\ell-1}}{4^{j}}t)$. 
In view of \eqref{EqGinQ}, we easily see that 
\[
\le(x_0,t_0+\fr{9^{\ell-1}-1}{8\cdot 4^j}\ri)+\le(-\fr{10\cdot 3^{\ell-1}}{2^j}, 
\fr{10\cdot 3^{\ell-1}}{2^j}\ri)^n\ti \le(0,\fr{10\cdot 9^{\ell-1}}{4^j}\ri]\subset Q.\]
Hence, it  follows that $w>1$ in $K_2$ because, 
if $\inf_{K_2}w\leq 1$, then the above argument again implies \eqref{eq:contra1} for $w$ in place of $v$,  
which contradicts \eqref{eq:L_ell-1} for $w$. 
\end{proof}

%%%%%%%%%
%%%%%%%%%%\UTF{00C2}\UTF{0081}@\UTF{00C2}\UTF{0081}@\UTF{00C2}\UTF{0081}@\UTF{00C2}\UTF{0081}@\UTF{00C2}\UTF{0081}@\UTF{00C2}\UTF{0081}@\UTF{00C2}\UTF{0081}@\UTF{00C2}\UTF{0081}@\UTF{00C2}\UTF{0081}@\UTF{00C2}\UTF{0081}@3. 2. A general case
%%%%%%%%%%%%%
%%%%%%%%%%%%%%%%

\subsection{A general case}

In order to show the weak Harnack inequality without assuming \eqref{wHsmall},  
we use a new barrier function, which will be constructed in 
Lemma \ref{lem:barrier}. 
%We first show that the solutions 
%constructed in Proposition \ref{prop:exist} admit  
%the weak Harnack inequality in small regions. 
%We note that the inequality in \eqref{Exists} cannot be replaced by 
%the equality because we do not have enough regularity for approximate 
%solutions.  
%Therefore, the next lemma is not trivial.  

%%%%%%%%%%%%%%%%%%%%%%%%%
%%%%%%%%%%%%%%%%%%%%%%%%%     Theorem  3. 4. 
%%%%%%%%%%%%%%%%%%%%%%%%%

\begin{thm}
\label{thm:weak}
Let \eqref{Apq1} hold, $f\in L^p_+(Q)$ and $\mu\in L^q_+(Q)$.  
There exist 
$\e_0=\e_0(n, \L, \l, p, q$, $\|\mu\|_{L^q(Q)})>0$ and 
$C_0=C_0(n,\L,\l,p,q,\|\mu\|_{L^q(Q)})>0$ %, and $s=s(n,\L/\l,p,q,\|\mu\|_{L^q(Q)})\in(0, 1)$ 
such that 
 any nonnegative $L^p$-viscosity supersolution $u$ of $(\ref{P+})$ satisfies   
\[%\label{WHI1}
\le(\int_{J_1} u^{\e_0} \,dxdt\ri)^{\fr{1}{\e_0}}\leq C_0\le(\low{J_2}{\inf}\, u +\|f\|_{L^p(Q)}\ri).
\]
\end{thm}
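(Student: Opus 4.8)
The plan is to repeat the proof of Theorem~\ref{thm:weak0} almost verbatim, the only change being that the barrier of Lemma~\ref{lem:barrier0} is replaced by the new ``heat kernel''-type barrier $\phi$ of Lemma~\ref{lem:barrier}, which is now a strong subsolution of the extremal equation \emph{including} the first-order term,
\[
\phi_t+\P^+(D^2\phi)+\mu|D\phi|\le g\quad\text{a.e. in }Q,
\]
and still satisfies $\phi\ge 0$ in $Q$, $\phi\ge 2$ in $K_2$, $\phi=0$ on $\p_p Q$, $\supp g\subset K_1$, together with the quantitative bounds $\|g\|_{L^p(Q)}\le C$ and $M:=\sup_{K_1}\phi\le C$ for some $C=C(n,\L,\l,p,q,\|\mu\|_{L^q(Q)})$. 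Because the drift is now carried by the barrier itself, the smallness hypothesis \eqref{wHsmall} is no longer needed; the price is that $M$, and hence the exponent and constants produced below --- in particular $\theta$, $\e_0$, $C_0$ --- are allowed to depend on $\|\mu\|_{L^q(Q)}$, which is exactly what Theorem~\ref{thm:weak} permits.

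As in the proof of Theorem~\ref{thm:weak0}, after normalizing $u$ by $N_0$ we reduce to proving \eqref{eq:integra} under $\inf_{J_2}u\le 1$ and $\|f\|_{L^p(Q)}\le\e_1$. Put $w:=\phi-u$. The same computation as there --- now with the drift absorbed into the barrier --- shows that $w$ is an $L^p$-viscosity subsolution of $w_t+\P^-(D^2w)-\mu|Dw|-(g^++f)=0$ in $Q$, with $g^++f\in L^p_+(Q)$ and $\supp g^+\subset K_1$ (here one uses that an $L^p$-strong subsolution is an $L^p$-viscosity subsolution). Since $w\le 0$ on $\p_p Q$, Proposition~\ref{thm:ABP}, whose constant depends on $d_Q^{1-(n+2)/q}\|\mu\|_{L^q(Q)}$, gives
\[
1\le\sup_{J_2}w\le\sup_{Q}w\le C\bigl(\|g\|_{L^p(K_1\cap Q_+[w])}+\e_1\bigr),
\]
the left inequality coming from $\phi\ge 2$ on $J_2\subset K_2$ and $\inf_{J_2}u\le 1$. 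Fixing $\e_1$ small (depending on $C$, hence on $\|\mu\|_{L^q(Q)}$) and using $\|g\|_{L^\infty(Q)}\le C$ yields $|K_1\cap Q_+[w]|\ge\theta|K_1|$ for some $\theta\in(0,1)$, i.e. the analogue of \eqref{eq:basic}, namely $|\{(x,t)\in K_1:u(x,t)\ge M\}|\le(1-\theta)|K_1|$, now with no restriction on $\mu$.

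From here the argument is word for word that of Theorem~\ref{thm:weak0}: fix $\delta\in(1-\theta,1)$ and $m$ as in \eqref{eq:Fixm}, introduce the cylinders $J_1^k$ and the index $k_0$, and prove the measure decay \eqref{eq:dis} by induction on $k$ using the parabolic Calder\'on--Zygmund lemma~\ref{CalZyg}; this gives \eqref{Decay} and hence \eqref{eq:integra} for any $\e_0\in(0,\b_0)$ with $\b_0=-\log\delta/(m\log M)$. The verification of hypothesis $(ii)$ of Lemma~\ref{CalZyg} reduces, as before, to the claim \eqref{claim}, proved by rescaling $v(x,t)=M^{1-km}u(x_0+2^{-j}x,t_0+4^{-j}t)$ and applying the measure estimate just obtained; the key point is that the rescaled coefficient $\tilde\mu(x,t)=2^{-j}\mu(x_0+2^{-j}x,t_0+4^{-j}t)$ obeys $\|\tilde\mu\|_{L^q(Q)}\le\|\mu\|_{L^q(Q)}$ precisely because $q>n+2$, so the estimate applies to $v$ with unchanged constants, and the iteration along the paraboloid of cubes $N_\ell$ runs exactly as in the restricted case.

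The genuine difficulty is Lemma~\ref{lem:barrier}: constructing a strong subsolution of the extremal equation that absorbs the unbounded drift $\mu|D\phi|$ while keeping the source $g$ supported in $K_1$ and $\|g\|_{L^p(Q)}$ bounded in terms of $n,\L,\l,p,q$ and $\|\mu\|_{L^q(Q)}$ only. This is where the global $W^{2,1}_p$ estimates of Dong, Krylov and Li \cite{DKX} come in, through Proposition~\ref{prop:exist}: one starts from an explicit heat-kernel-like profile that is a strict supersolution of the extremal operator outside $K_1$ and is at least $2$ on $K_2$, then corrects it by solving \eqref{Exists} with a suitable right-hand side, using \eqref{MP} and the $W^{2,1}_p$ bound of Proposition~\ref{prop:exist} to keep both $\phi$ and $g$ under quantitative control. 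Everything else in the proof of Theorem~\ref{thm:weak} is a faithful copy of the restricted case.
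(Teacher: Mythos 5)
Your proof follows the paper's own argument for Theorem~\ref{thm:weak} essentially verbatim: replace the $C^{2,1}$ barrier of Lemma~\ref{lem:barrier0} with the drift-absorbing $W^{2,1}_q$ barrier of Lemma~\ref{lem:barrier}, run ABP on $w=\phi-u$ to recover the measure estimate \eqref{eq:basic} without smallness of $\mu$, and then copy the Calder\'on--Zygmund iteration from Theorem~\ref{thm:weak0} unchanged, noting that the rescaled drift satisfies $\|\tilde\mu\|_{L^q(Q)}\le\|\mu\|_{L^q(Q)}$ since $q>n+2$. One small inaccuracy: you assert $\|g\|_{L^\infty(Q)}\le C$, but the $g$ of Lemma~\ref{lem:barrier} contains the term $\hat M\mu\psi|D\eta|$, so $g$ is only in $L^q$, not $L^\infty$. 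This does not harm the argument --- from $1\le C(\|g\|_{L^p(K_1\cap Q_+[w])}+\e_1)$ one gets the lower bound $|K_1\cap Q_+[w]|\ge\theta|K_1|$ by H\"older, since $\|g\|_{L^p(A)}\le C\bigl(|A|^{1/p}+|A|^{1/p-1/q}\|\mu\|_{L^q(Q)}\bigr)$, which yields a $\theta$ depending only on $n,\L,\l,p,q,\|\mu\|_{L^q(Q)}$ as required --- but the pointwise $L^\infty$ bound you quote is not available.
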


%%%%%%%%%%%%%%%%     REMARK   3.  5. 

\begin{rem}
The constants $\e_0,C_0$ above depend on 
$\| \mu\|_{L^q(Q)}$ in a sense that even 
if we consider a different $\hat \mu\in L^q(Q)$ such that $\|\hat\mu\|_{L^q(Q)}\leq 
\|\mu\|_{L^q(Q)}$ in place of $\mu$ in Theorem \ref{thm:weak}, 
the same conclusion holds true with the same constants as in Theorem \ref{thm:weak}. 
\end{rem}

%%%%%%%%%%%%%%    Remark 3. 6.

\begin{rem}
When $\mu\in L^\infty (Q)$,  $\phi$ in the next lemma can be given by a modified heat kernel from \cite{W}. 
However, since we have unbounded $\mu$, 
it is not possible to construct such a precise function for $\phi$ below. 
\end{rem}

%%%%%%%%%%%%%%     LEMMA   3.  7.  Barrier

\begin{lem}
\label{lem:barrier}
Let $q>n+2$ and $\mu\in L^q_+(Q)$. There exist a nonnegative function $\phi\in C(\ol{Q})\cap W^{2, 1}_{q}(Q)$ 
 and a function $g\in L^q(Q)$ such that    
\[
\le\{ \begin{array}{rcll}
\phi_t +\P^+(D^2\phi)+\mu (x,t)|D\phi|&\leq& g(x, t) &a.e. \ \mathrm{ in} \ Q, \\ 
\phi&\geq&2 &\mathrm{in} \ K_2,\\ 
\phi&=&0 &\mathrm{on} \ \p_p Q,\\
\supp\,g&\subset& K_1.&
\end{array} \ri.
\]
\end{lem}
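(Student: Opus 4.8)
The plan is to build $\phi$ as the $L^q$-strong solution of a suitable extremal terminal-value problem with a right-hand side $g$ supported in $K_1$, and then to check the two inequalities $\phi\ge 2$ in $K_2$ and $\phi=0$ on $\p_p Q$. Concretely, I would fix a smooth nonnegative cutoff $\zeta\in C_c^\infty((-1,1)^n\times(0,1))$ with $\zeta\equiv 1$ on a slightly smaller cube, set $g:=-\Lambda_0\zeta$ for a large constant $\Lambda_0>0$ to be chosen, and let $\phi\in C(\ol Q)\cap W^{2,1}_q(Q)$ be the solution of
\[
\le\{\begin{array}{rl}
\phi_t+\P^+(D^2\phi)+\mu|D\phi|=g & \text{a.e. in }Q,\\
\phi=0 & \text{on }\p_p Q,
\end{array}\ri.
\]
whose existence (with $\partial\O$ taken $C^{1,1}$, as agreed in the text) is exactly Proposition \ref{prop:exist} applied with $\psi\equiv0$, $f=-g\in L^q(Q)$, using $q>n+2$. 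By construction $\supp g\subset K_1$ and $\phi=0$ on $\p_pQ$, so it remains only to force $\phi\ge 2$ on $K_2$ by choosing $\Lambda_0$ large.

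The key estimate is a lower bound on $\phi$ in $K_2$. Since $g\le 0$, the function $\phi$ is a nonnegative $L^q$-strong (hence, by the remark after Proposition \ref{thm:ABP}, $L^q$-viscosity) supersolution of $\phi_t+\P^+(D^2\phi)+\mu|D\phi|\ge 0$, so $\phi\ge 0$ everywhere. To get the quantitative bound I would compare $\phi$ from below with the solution $\underline\phi$ of the restricted-case barrier problem of Lemma \ref{lem:barrier0}, or more simply argue by contradiction: if $\inf_{K_2}\phi$ were small, then applying the ABP maximum principle of Proposition \ref{thm:ABP} to $-\phi$ (which is an $L^q$-viscosity subsolution of $w_t+\P^-(D^2w)-\mu|Dw|-(-g)=0$, with $\|{-g}\|_{L^p}=\Lambda_0\|\zeta\|_{L^p}$ under $p\le q$) in the cylinder $K_2$ — whose parabolic boundary meets $\{t=1\}$ inside $K_1$, where we control $\phi$ from below trivially by $0$ together with the contribution of $g$ — would yield a lower bound of the form $\inf_{K_2}\phi\ge c_0\Lambda_0 - C(\|\mu\|_{L^q})$, which is $\ge 2$ once $\Lambda_0$ is large. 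Here the scaling invariance of $\|\mu\|_{L^q(Q)}$ under the parabolic rescalings (because $q>n+2$) is what keeps all constants independent of the fine cube location, exactly as in the proof of Theorem \ref{thm:weak0}.

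The main obstacle I anticipate is the quantitative propagation of positivity from $K_1$ to all of $K_2$ in the presence of the unbounded drift $\mu|D\phi|$: one must ensure the constant $c_0$ multiplying $\Lambda_0$ in the lower bound does not degenerate, and that the drift term can be absorbed rather than destroying the sign. The clean way to handle this is to note that $\phi$ being a nonnegative supersolution of the extremal equation with $\mu$-drift already makes it a supersolution of $\phi_t+\P^+(D^2\phi)+\mu|D\phi|\ge g$, and then to invoke the restricted weak Harnack estimate or a direct barrier comparison on the fixed geometry $K_1\subset K_2$, where $\|\mu\|_{L^q}$ enters only through the harmless constant $C_1$ of Proposition \ref{thm:ABP}; since that geometry is fixed once and for all, no smallness of $\|\mu\|_{L^q}$ is needed. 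Finally, the regularity claims $\phi\in C(\ol Q)\cap W^{2,1}_q(Q)$ and $g\in L^q(Q)$ are immediate from Proposition \ref{prop:exist} and the smoothness of $\zeta$.
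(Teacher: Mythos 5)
Your setup is reasonable, and the first half of the proposal — solving the terminal-value problem via Proposition \ref{prop:exist} with $g=-\Lambda_0\zeta$ supported in $K_1$, whence $\phi\ge 0$, $\phi=0$ on $\p_pQ$, $\supp g\subset K_1$, and the regularity claims — is fine and parallel to the paper (the paper instead takes zero forcing and nonzero initial data $\xi$ concentrated near $t=0$, then cuts off, but this is a cosmetic difference). The gap is precisely the step you flagged as the ``main obstacle'': you do not give a correct mechanism for proving $\inf_{K_2}\phi\ge 2$.

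Concretely, the bound $\inf_{K_2}\phi\ge c_0\Lambda_0 - C(\|\mu\|_{L^q})$ cannot come from applying Proposition \ref{thm:ABP} to $-\phi$ in $K_2$. First, ABP gives an \emph{upper} bound on a subsolution by its parabolic-boundary values plus a forcing norm; it never produces a quantitative interior lower bound on a supersolution. Second, $-g=\Lambda_0\zeta$ is supported in $K_1$, which is disjoint from the interior of $K_2$, so the forcing term in ABP applied on $K_2$ is zero and $\Lambda_0$ never enters: one only gets $\inf_{K_2}\phi\ge\inf_{\p_pK_2}\phi$, which could be $0$ (e.g. on the lateral sides of $K_2$). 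Third, your claim that the restricted weak Harnack inequality can be invoked ``on the fixed geometry $K_1\subset K_2$'' with ``no smallness of $\|\mu\|_{L^q}$'' contradicts hypothesis \eqref{wHsmall} of Theorem \ref{thm:weak0}; the constant $C_1$ in Proposition \ref{thm:ABP} being finite is not enough, since ABP does not propagate positivity across $K_2$. A ``direct barrier comparison'' with the drift-free barrier of Lemma \ref{lem:barrier0} runs into the same problem: one must absorb $\mu|D\cdot|$, which again needs smallness.

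What actually closes the gap in the paper is a covering/iteration argument: assume $\psi(x_0,t_0)=0$ for some $(x_0,t_0)\in\ol K_2$ and rescale to a cylinder of radius $r$ chosen so that $r^{1-(n+2)/q}\|\mu\|_{L^q}\le\d_0$ (possible because $q>n+2$). In that small cylinder Theorem \ref{thm:weak0} applies and forces the rescaled $\psi$ to vanish on $J_1$, whence one finds a new point further back in time where $\psi$ vanishes. Iterating a fixed finite number of times (chaining through cylinders as in Figure 3), one reaches the initial slice $t=0$ near $(-\tfrac12,\tfrac12)^n$ and contradicts $\xi(\cdot,0)>0$ there. That is the step your proposal is missing; without it, $\phi\ge 2$ in $K_2$ is not established.
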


%%%%%%%%%%%%%%%%%
%%%%%%%%%%%%%%%%%     Proof of Lemma    barrier
%%%%%%%%%%%%%%%%%

\begin{proof}
Choose a nonnegative function $\xi\in C^\infty (\ol{Q})$ such that $\xi =0$ in 
$\ol{Q}\setminus K_{1/4}$, where $K_{1/4}:=(-\frac 12,\frac 12)^n\ti (0,\frac{1}{4}]$ (see Fig $1$), and 
$\xi (x,0)>0$ for $x\in (-\frac 12,\frac 12)^n$. 
In view of Proposition \ref{prop:exist}, 
we can find a nonnegative function  
$\psi\in C(\ol{Q})\cap W^{2,1}_{q}(Q)$ satisfying 
\[
\le\{\begin{array}{rl}
\psi_t+\P^+(D^2\psi)+\mu |D\psi|= 0&a.e.\mbox{ in }Q,\\
\psi =\xi&\mbox{on }\p_pQ.
\end{array}\ri.
\]

We claim that there exists $\s>0$ such that 
\[
\psi\geq \s\quad\mbox{in }K_2.
\]
In fact, assuming $\psi (x_0,t_0)=0$ for $(x_0,t_0)\in \ol K_2$, we will 
obtain a contradiction. 

For $r\in (0,\fr{1}{\sqrt{10}}]$, 
we set $v_0(x,t)=\psi (x_0+rx,t_0+r^2(t-10))=0$ for $(x,t)\in Q$, 
$v_0(0,10)=0$ and the function $v_0$ is a solution 
of 
\[
(v_0)_t+\P^+(D^2v_0)+\hat \mu|Dv_0|=0\quad\mbox{in }Q,\]
where 
\[
\hat\mu(x,t)=r\mu (x_0+rx,t_0+r^2(t-10)).\]
Since  it follows that
\[
\| \hat\mu\|_{L^{p}(Q)}\leq 
r^{1-\fr{n+2}{q}}\| \mu\|_{L^q(Q)},
\]
if we choose $r:=\le(\d_0\|\mu\|_{L^{p}(Q)}^{-1}\ri)^{\fr{q}{q-(n+2)}}$, where $\d_0$ is from
Theorem \ref{thm:weak0} for $p=q$, then Theorem \ref{thm:weak0}
yields $v_0=0$ in $J_1$. To continue the proof we will assume (without loss of generality) that $t_0=1$.
If $x_0\in [-\fr14,\fr14]^n$, then Theorem \ref{thm:weak0} 
implies $\psi (x_0,0)=0$, which contradicts our choice of $\psi$. 
Thus, without loss of generality, it is enough to consider $x_0\in \p (-3,3)^n$. 
Therefore, we can choose $x_1 \in (-3,3)^n$ such that 
$x_0\in  x_1+\p (-r,r)^n$.

\vspace{1cm}

%%%%%%%%%%%%%%%%%%%%%%%%%
%%%%%%%%%%%%%%%%%%%%%%%%%    Fig   3
%%%%%%%%%%%%%%%%%%%%%%%%%

\begin{picture}(420,260)
\linethickness{1pt}
\put(0,10){\vector(1,0){420}}%\UTF{00C2}\UTF{0089}\UTF{00C2}\UTF{00A1}\UTF{00C2}\UTF{008E}\UTF{00C2}\UTF{00B2}%
\put(210,0){\vector(0,1){270}}%\UTF{00C2}\UTF{008F}c\UTF{00C2}\UTF{008E}\UTF{00C2}\UTF{00B2}%
\put(10,210){\line(0,1){50}}%{\framebox(400,200)}%K_1
\put(410,210){\line(0,1){50}}
\put(10,210){\line(1,0){400}}

\put(175,10){\framebox(70,50)}

\put(15,225){$(x_0,1)$}
\put(22,222){\vector(-1,-1){10}}

\put(-18,200){$10r_k^2\{$}
\put(9,190){$\smile$ \hspace{-2mm}$\smile$}
\put(12,176){$r_k$}
\put(11,185){$\uparrow$ \hspace{-2mm}$\nearrow$}
%\put(8,215){$\downarrow$}
\put(8,208){$\bullet$}
\put(18,208){$\bullet$}
\put(27,192){$\bullet$}
\put(47,176){$\bullet$}
\put(67,161){$\bullet$}
\put(172,76){$\bullet$}
\put(153,92){$\bullet$}

\put(40,215){$ (x_1,1)$}
\put(40,220){\vector(-2,-1){15}}

\put(180,-2){$(0,0)$}
\put(405,0){$3$}
\put(415,0){$x$}
\put(350,220){$K_2$}
\put(220,30){$K_{1/4}$}
\put(10,195){\framebox(20,15)}
\put(30,180){\framebox(20,15)}
\put(50,165){\framebox(20,15)}
\put(155,80){\framebox(20,15)}
\put(10,195){\framebox(20,2)}
\put(10,206){\framebox(20,4)}
%\put(175,30){\dashline(0,1){30}}
\put(50,205){\vector(-2,-1){18}}
\put(38,138){\vector(1,4){10}}
\put(185,113){\vector(-1,-4){8}}
\put(170,120){$(x_k,(1-10(k-1)r_k^2)$}
\put(75,150){$\ddots$}
\put(140,100){$\ddots$}
\put(212,65){$\fr14$}
\put(240,0){$\fr12$}
\put(50,200){$(x_2,1-10r_k^2)$}
\put(15,130){$(x_3,1-20r_k^2)$}
\put(200,220){$t$}
\put(215,215){$1$}
\put(150,-18){Figure $3$. The procedure.}

\linethickness{0.1pt}
\put(175,60){\line(0,1){20}}
\put(10,10){\line(0,1){200}}
\put(410,10){\line(0,1){200}}
\end{picture}

\vspace{1.5cm}

%%%%%%%%%%%%%%%%%%%%%%%%%%
%%%%%%%%%%%%%%%%%%%%%%%%%

Setting $r_k=\fr{5}{2(k-1)}$ for $k\geq 1+\fr{5}{2r}$ (i.e. $r_k\leq r$), %$k\geq 2$, 
if we fix $k\geq \max\{ \fr{253}{3}, 1+\fr{5}{2r}\}$, then 
\[
10r_k^2(k-1)\leq  \fr34 .\]
Thus, using Theorem \ref{thm:weak0} finitely many times,  we can find $(x_k,1-10(k-1)r_k^2)\in 
[-\fr12,\fr12]^n\ti [\fr14,1]$ such that $u(x_k,1-10(k-1)r_k^2)=0$. 
See Fig 3 for this procedure. 
Hence, by Theorem \ref{thm:weak0} again, we arrive at a contradiction.

Therefore, %%%%%%% CHANGED 
for a large number $\hat M>0$, we verify that 
$\hat M\psi \geq 2$ in $K_2$. 
Now, let $\eta\in C^\infty (\ol Q)$ be a nonnegative function 
such that 
\[
\eta =1\quad \mbox{in }Q\setminus K_1,\quad\mbox{and}\quad 
\eta =0\quad\mbox{in }K_{1/4}.\]
It is easy to observe that $\phi:=\hat M\eta\psi$ satisfies the desired properties. 
In fact, we may choose 
$g=\hat M[\psi\eta_t+\P^+(\psi D^2\eta+2D\eta\otimes D\psi)+\mu\psi|D\eta|]$. 
\end{proof}

%%%%%%%%%%%%%      Remark  3.  8.  

\begin{rem}
We notice that the global $W^{2,1}_p(Q)$ estimate of Proposition \ref{prop:exist} is necessary to verify that $g\in L^p(Q)$ in the final step of the above proof. 
\end{rem}

\begin{proof}[Proof of Theorem \ref{thm:weak}]
%We recall  cubes $Q:=(-10,10)^n\ti (0,10]$, $J_1:=(-1, 1)^n\ti (0, \frac12 ]\subset K_1$, and $J_2:=(-1, 1)^n\ti (9, 10]$. 
%We only consider the case $(\ref{Apq1})$ for $p,q,n$ 
%since the other case can be proved in a similar manner. 
For $\e_1>0$, which will be fixed later, we set
\[\tilde{u}(x, t)=N_0u(x,t),\]
where $N_0=\le(\inf_{J_2} u +\e_1^{-1}\|f\|_{L^{p}(Q)} +\eta\ri)^{-1}$ 
for $\eta>0$, which will be sent to $0$ at the end of the proof. 
As in the proof of Theorem \ref{thm:weak0},  
it is enough to show that there are $\e_0,C_0>0$ such that 
\eqref{eq:integra} holds under assumptions \eqref{eq:Reduction}.
%$$\low{J_2} \inf \, u\leq 1,\quad\mbox{and}\quad \| f\|_{L^{n+1}(Q)}\leq \e_1.$$

Let $\phi$ be the function from Lemma \ref{lem:barrier}. 
By letting $w:=\phi-u$, it is immediate to see that $w$ is an $L^p$-viscosity subsolution of 
\[w_t +\P^-(D^2w) -\mu |Dw|-h=0 \quad \mbox{in } Q,%_1(0, 1).
\]
where $h:=g -f$. %=:h%We define $h:=g -f +\mu^+|D\phi|$. 
In view of Proposition \ref{thm:ABP}, we have  
\[\under{Q}%_1(0,1)}
{\sup}\, w\leq 
%\under{ \p_p \hat Q}%_1(0,1) }
%{ \sup }\, w^+ +C%\exp\le(C\|\mu^+\|_{L^{n+1}(Q))}\ri)
C\|h\|_{L^{p}%n+1}
(Q_{+}[w])}.
\]
Hence, it is easy to verify that this inequality implies 
\[%\begin{equation}\label{eq:UseABP}
1\leq \low{J_2}\sup \, w\leq C\|h\|_{L^{p}( Q_{+}[w])} .
\]%\end{equation}
Recalling that supp $g\subset K_1$ in Lemma 
\ref{lem:barrier},
we can find $\hat C=\hat C(n,\L,\l,p,q,\|\mu\|_{L^q(Q)})>0$ such that
\[
%\leq  C\|h\|_{L^{n+1}(\hat Q_{+}[w])} \\
1\leq \hat C\le(\|g\|_{L^{p}( Q_{+}[w]\cap K_1)}+\e_1\ri).
%\leq  \hat C
%\le(\| g\|_{L^p(Q)}| Q_+[w]\cap K_1|^{\fr{1}{n+1}-\fr{1}{p}}+\e_1\ri).
\]
%where $Q_{1+}(0, 1)=\{ w>0 \}$. 
Thus, for some fixed $\e_1>0$, there is $\theta\in (0,1)$ 
such that $|\{ (x,t)\in K_1 \ | \ w(x,t)>0\}| \geq \theta |K_1|$. 
Hence, as before, we obtain \eqref{eq:basic}. 

We can follow the same arguments as those in the proof of Theorem \ref{thm:weak0} 
to conclude the proof. 
\end{proof}

%%%%%%%%%%%   REMARK  3. 9. 

\begin{rem}\label{RwHI}In the above proof, we have shown that there exist $A_0,\b_0,\e_1>0$ such that 
if $u\in C(Q)$ is an $L^p$-viscosity supersolution of \eqref{P+} satisfying \[\low{J_2}\inf \, u\leq 1,
\]and if $\| f\|_{L^p(Q)}\leq \e_1$, 
then \eqref{Decay} holds true. 
%, it follows that $$|\{ (x,t)\in J_1 \ | \ u(x,t)\geq s\}|\leq A_0s^{-\b_0}\quad \mbox{for }s\geq 1.$$
\end{rem}

%%%%%%%%%%%%%%%%%%%%%%%%%%%%%%%%%%%%%%%%%%%%%%%%%%%%%%%%%%
%%%%%%%%%%%%%%
%%%%%%%%%%%%%%%%%%%%%      Section  4   Applications
%%%%%%%%%%%%%%%%%%%%%
%%%%%%%%%%%%%%%%%%%%%%%%%%%%%%%%%%%%%%%%%%%%%%%%%%%%%%%%%%%%

\section{Applications}

In this section, we consider 
 $L^p$-viscosity solutions of 
\begin{equation}\label{4-1}
u_t+G(x, t, Du, D^2u)-f(x, t)=0 \quad\mbox{in }Q,
\end{equation}
where $Q=(-10,10)^n\ti (0,10]$, and 
$G:Q\ti \R^n\ti S^n\to\R$ and $f:Q\to\R$ are given. 
We assume the following hypotheses for $G$ and $f$: 
%\begin{equation}\label{4-2}
%G(x, t, 0, O)=0\quad\mbox{for }(x, t)\in Q,\end{equation}
%\begin{equation}\label{4-3}
%\le\{\begin{array}{c}
%\P^-(X-Y)\leq G(x,t,\xi,X)- G(x,t,\xi,Y)\leq \P^+(X-Y)\\
%\mbox{for }(x, t, \xi, X, Y)\in Q\ti\R^n\ti S^n\ti S^n,
%\end{array}\ri.
%\end{equation}
\begin{equation}\label{4-4}
\le\{
\begin{array}{c}
\mbox{there exists } \mu\in L^q(Q) \mbox{ for } q>n+2 \mbox{ such that}\\
|G(x, t, \xi, O)|\leq\mu(x, t)|\xi| \mbox{ for } (x, t)\in Q\mbox{ and } \xi\in \R^n,\end{array}\ri. 
\end{equation}
\begin{equation}\label{4-5}
f\in L^p(Q) \quad\mbox{for } p\in (p_1,q].
\end{equation}

\begin{rem}
We note that \eqref{4-4} yields
\[
G(x,t,0,O)=0\quad\mbox{for }(x,t)\in Q.\]
Under \eqref{4-4} and \eqref{4-5}, if we suppose that $G$ satisfies \eqref{UP}, 
then it is easy to observe that if $u\in C(Q)$ is an 
$L^p$-viscosity subsolution (resp., supersolution) of \eqref{4-1}, then 
it is an $L^p$-viscosity subsolution (resp., supersolution) of 
\begin{equation}\label{eq:exteqs}
u_t+\P^-(D^2u)-\mu |Du|-f=0\quad
\le(\mbox{resp., } \ u_t+\P^+(D^2u)+\mu |Du|-f=0\ri)\quad\mbox{in }Q.
\end{equation}
Thus the properties of $L^p$-viscosity solutions of \eqref{4-1} discussed in this section will follow from the properties of 
$L^p$-viscosity sub/supersolutions of the extremal equations \eqref{eq:exteqs}.
\end{rem}

\subsection{H\"older continuity}

We show that  
the weak Harnack inequality for $L^p$-viscosity supersolutions of \eqref{P+} yields the H\"older continuity of solutions of \eqref{4-1} under the above hypotheses. This was remarked in \cite{KS3} for elliptic PDE.

For $r\in (0,1)$, we set
\[
Q_r:=(-10r,10r)^n\ti (10-10r^2,10] .\]
Notice that $Q_{10r}(0,10)$ defined in Section 2 is slightly different from this $Q_r$. 

%%%%%%%%%%%%%%%    Theorem  4.  2. 

\begin{thm}\label{thm:Holder}
Let $G$ satisfy \eqref{UP}, \eqref{4-4} and \eqref{4-5}. 
There exist $C>0$ and $\alpha\in (0,1)$ such that if $u\in C(Q)$ is an $L^p$-viscosity solution of \eqref{4-1}, then 
\[
|u(x,t)-u(\hat x,\hat t)|\leq C\le( |x-\hat x|^2+|t-\hat t|\ri)^{\fr{\a}{2}}
(\|u\|_{L^\infty(Q)}+\|f\|_{L^p(Q)})\quad\mbox{for }(x,t),(\hat x,\hat t)\in Q_{\fr{1}{2}}.
\]
\end{thm}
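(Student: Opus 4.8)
The strategy is the classical oscillation-decay argument of Krylov--Safonov, adapted to the parabolic $L^p$-viscosity setting. The key point is that $u$ being an $L^p$-viscosity solution of \eqref{4-1} makes it simultaneously an $L^p$-viscosity subsolution and supersolution of the extremal equations \eqref{eq:exteqs}, so the weak Harnack inequality of Theorem \ref{thm:weak} applies to supersolutions and (by the symmetric statement for subsolutions, i.e. applying Theorem \ref{thm:weak} to $-u$ after the obvious sign change in the extremal operator) we get a two-sided control. Concretely, I would first reduce by scaling and translation: for a point $(x_0,t_0)\in Q_{1/2}$ and a radius $\rho$ with $Q_\rho(x_0,t_0)$ (in the notation of Section 2, suitably placed in the ``past'' direction) contained in $Q$, rescale $u$ to the fixed cylinder $Q=(-10,10)^n\ti(0,10]$ via $v(x,t)=u(x_0+\rho x, t_0+\rho^2(t-10))$, which is again a solution of an equation of the same type with $\mu$ replaced by $\tilde\mu(x,t)=\rho\mu(\cdots)$ and $f$ by $\tilde f(x,t)=\rho^2 f(\cdots)$; crucially, since $q>n+2$ and $p>(n+2)/2$, we have $\|\tilde\mu\|_{L^q(Q)}\le\|\mu\|_{L^q(Q)}$ and $\|\tilde f\|_{L^p(Q)}\le\rho^{2-(n+2)/p}\|f\|_{L^p(Q)}$, so the constants in Theorem \ref{thm:weak} do not deteriorate under this rescaling.

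\textbf{The oscillation step.} Set $M_k=\sup_{Q_{r^k}(x_0,t_0)}u$ and $m_k=\inf_{Q_{r^k}(x_0,t_0)}u$ for a fixed ratio $r\in(0,1)$ to be chosen, and $\osc_k=M_k-m_k$. The heart of the matter is to show $\osc_{k+1}\le\gamma\,\osc_k + C\rho_k^{2-(n+2)/p}\|f\|_{L^p(Q)}$ for some $\gamma\in(0,1)$ independent of $k$, where $\rho_k\sim r^k$. To prove one such step: on the rescaled cylinder, at least one of the two nonnegative functions $M_1-v$ or $v-m_1$ occupies at least half of $J_1$ (in measure) at level $\ge \osc_1/2$ --- precisely, $|\{v\le (M_1+m_1)/2\}\cap J_1|\ge\frac12|J_1|$ or the analogous statement with $\ge$; without loss of generality say the former. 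Then $w:=M_1-v$ is a nonnegative $L^p$-viscosity supersolution of $w_t+\P^+(D^2 w)+\mu|Dw|+\tilde f=0$ (after absorbing signs) with $\inf_{J_2}w\le$ something comparable to $\osc_1$ — here one uses that $J_2$ lies inside $Q_{r^k}(x_0,t_0)$ for suitable geometry, or more honestly one first proves a measure-to-pointwise estimate and then iterates. Applying the weak Harnack inequality of Theorem \ref{thm:weak} to $w$ gives
\[
\le(\int_{J_1}w^{\e_0}\,dxdt\ri)^{1/\e_0}\le C_0\le(\inf_{J_2}w+\|\tilde f\|_{L^p(Q)}\ri),
\]
and combining this with the lower bound $\int_{J_1}w^{\e_0}\ge (\osc_1/2)^{\e_0}\cdot\frac12|J_1|$ coming from the measure estimate yields $\inf_{J_2}w\ge c\,\osc_1 - C\|\tilde f\|_{L^p(Q)}$, i.e. $\sup_{J_2}v\le M_1-c\,\osc_1 + C\|\tilde f\|_{L^p(Q)}$. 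Since $\inf_{J_2}v\ge m_1$, this gives $\osc(v;J_2)\le(1-c)\osc_1+C\|\tilde f\|_{L^p(Q)}$, which after undoing the scaling is the desired contraction $\osc_{k+1}\le\gamma\osc_k+C\rho_k^{2-(n+2)/p}\|f\|_{L^p(Q)}$ with $\gamma=1-c<1$ (choosing the ratio $r$ so that $J_2$ in the rescaled picture corresponds to a cylinder of the next generation).

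\textbf{Iteration and conclusion.} Iterating the recursion $\osc_{k+1}\le\gamma\osc_k+C_* r^{k(2-(n+2)/p)}\|f\|_{L^p(Q)}$ is a standard geometric-series lemma: choosing $\alpha\in(0,1)$ small enough that $r^\alpha>\gamma$ but $\alpha<2-(n+2)/p$ (possible since $p>(n+2)/2$, so the exponent $2-(n+2)/p>0$; in fact for the $C^\alpha$ norm as stated we only need $\alpha<\min\{-\log\gamma/\log(1/r),\,2-(n+2)/p\}$), one obtains $\osc_k\le C r^{\alpha k}(\|u\|_{L^\infty(Q)}+\|f\|_{L^p(Q)})$. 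Translating radii back, for any $(x,t),(\hat x,\hat t)\in Q_{1/2}$ one picks $k$ with $r^{k+1}<d((x,t),(\hat x,\hat t))/(\text{fixed const})\le r^k$ and reads off $|u(x,t)-u(\hat x,\hat t)|\le\osc_k\le C\,d((x,t),(\hat x,\hat t))^\alpha(\|u\|_{L^\infty}+\|f\|_{L^p})$, which is exactly the claimed estimate with $d^\alpha=(|x-\hat x|^2+|t-\hat t|)^{\alpha/2}$. One caveat on geometry: the cylinders $Q_\rho$ here open only toward the past, so for points near the initial slice $\{t=0\}$ one must be slightly careful about which cylinders are admissible, but since we work inside $Q_{1/2}\subset\subset Q$ there is always room; alternatively one invokes the standard trick of proving the interior estimate first and noting the constants only depend on $\dist(Q_{1/2},\partial_p Q)>0$.

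\textbf{Main obstacle.} The genuinely delicate point is the passage from the measure estimate $|\{v\le(M_1+m_1)/2\}\cap J_1|\ge\frac12|J_1|$ (valid on a single cylinder) to a pointwise lower bound for $w=M_1-v$ on $J_2$ via a single application of the weak Harnack inequality --- because $J_1$ and $J_2$ are disjoint cylinders separated in time, and the weak Harnack inequality already encodes exactly this propagation of positivity from the ``past'' ($J_1$) to the ``future'' ($J_2$). Getting the geometry right so that the shrinking dyadic cylinders $Q_{r^k}(x_0,t_0)$ are nested with the correct $J_1/J_2$ correspondence under rescaling --- and checking that $J_2$ of one generation sits inside $J_1$ of the previous generation (or at least that one can chain finitely many weak-Harnack applications) --- is the main bookkeeping hurdle; once the contraction $\osc_{k+1}\le\gamma\,\osc_k+\ldots$ is secured, the rest is the routine iteration lemma. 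I expect the cleanest route is to cite the corresponding elliptic argument of \cite{KS3} and indicate that the parabolic modifications are exactly those already carried out in the proof of Theorem \ref{thm:weak}.
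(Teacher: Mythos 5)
Your overall strategy matches the paper's: reduce to the extremal inequalities \eqref{eq:exteqs}, normalize so that $\|u\|_{L^\infty}\le 1$ and $\|f\|_{L^p}\le 1$, apply the rescaled weak Harnack inequality (Theorem~\ref{thm:weak}) to the two nonnegative supersolutions $M_r-u$ and $u-m_r$, obtain an oscillation contraction $\omega(r/10)\le\gamma\,\omega(r)+r^{\alpha_0}$ with $\gamma<1$ and $\alpha_0=2-(n+2)/p>0$, and finish by the standard iteration lemma. That is exactly what the paper does, with the ratio fixed at $1/10$ by the geometry $Q_{r/10}\subset J_2^r=(-r,r)^n\times(10-r^2,10]$.

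The one point where you diverge, and where the paper's route is somewhat slicker, is the way the contraction is extracted. You invoke the classical measure dichotomy: one of $M_1-v$, $v-m_1$ is at least $\osc/2$ on at least half of $J_1$, apply the weak Harnack to that one, and deduce a pointwise lower bound on $J_2$. The paper instead applies the weak Harnack to \emph{both} $v:=M_r-u(r\cdot,10+r^2(\cdot-10))$ and $w:=u(r\cdot,10+r^2(\cdot-10))-m_r$ simultaneously, and exploits the algebraic identity $v+w\equiv\omega(r)$ (a constant). Writing $\omega(r)=\bigl(|J_1|^{-1}\int_{J_1}\omega(r)^{\e_0}\bigr)^{1/\e_0}$ and using subadditivity of $s\mapsto s^{\e_0}$, one bounds this by $C_1\bigl(\inf_{J_2}v+\inf_{J_2}w+r^{\alpha_0}\bigr)=C_1\bigl(\omega(r)-\osc(u;J_2^r)+r^{\alpha_0}\bigr)$, which rearranges directly to $\osc(u;J_2^r)\le\frac{C_1-1}{C_1}\omega(r)+r^{\alpha_0}$ with no case split at all. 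This sidesteps entirely the ``main obstacle'' you flag (getting the measure-to-pointwise propagation geometry right), because the two weak Harnack inequalities already carry both directions of the information. Both routes are correct and give the same exponent $\alpha=\min\{-\log\gamma/\log 10,\,\alpha_0\}$; yours requires a bit more bookkeeping at the dichotomy step, the paper's trades that for the small convexity constant $2^{\max\{0,1/\e_0-1\}}$ in $C_1$.
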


%One direct consequence of this claim for $\ell=m$ is the following assertion: under \eqref{eq:AL}, it follows that  
%\begin{equation}\label{olLmSubset}
%u>M^{m(k-1)}\quad \mbox{in }N_m\cap \{\R^n\ti (0,10]\}.
%\end{equation}

\begin{proof}
Working with extremal equations \eqref{eq:exteqs} and considering
\[
u:=\frac{u}{\|u\|_{L^\infty(Q)}+\|f\|_{L^p(Q)}},
\]
we can assume that $\|u\|_{L^\infty(Q)}\leq 1$ and $\|f\|_{L^p(Q)}\leq 1$.

Fix $r\in (0,1)$. 
Setting $M_r :=\sup_{Q_r}u$ and $m_r :=\inf_{Q_r}u$, we define 
\[
\o (r):=M_r-m_r\quad \mbox{for }r\in (0,1).\]

It is easy to observe that for $(x,t)\in Q$, 
$v(x,t):=M_r-u(rx,10+r^2(t-10))$ and 
$w(x,t):=u(rx,10+r^2(t-10))-m_r$ are nonnegative, 
$L^p$-viscosity supersolutions of \eqref{P+}. 
Hence, in view of Theorem \ref{thm:weak}, we find  constants $\e_0, C_0>0$ such that 
\[
\le(\int_{J_1} U^{\e_0} \,dxdt\ri)^{\fr{1}{\e_0}}
\leq C_0\le(
\low{J_2}\inf \, U+r^{2-\fr{n+2}{p}}\| f\|_{L^p(Q)}\ri)
\quad\mbox{for }U=v\,\,\mbox{and } U=w,\]
where $\e_0, C_0>0$ are the constants from Theorem \ref{thm:weak}. 
%Since we  may assume that $C_0>1$,  
Setting $a_0=2-\fr{n+2}{p}$ and $C_1=2^{\max\{ 0,\fr{1}{\e_0}-1\}}C_0|J_1|^{-\fr{1}{\e_0}}$, we have
\[
\begin{array}{rl}
\o(r)=\le(\dis \fr{1}{|J_1|}\int_{J_1}\o(r)^{\e_0} \,dxdt\ri)^{\fr{1}{\e_0}}\leq&
 C_1\le(\low{J_2}\inf \, v+\low{J_2}\inf \, w +r^{\a_0} \ri)\\
\leq& C_1\le( \o(r)-\low{J_2^r}\sup \, u+\low{J_2^r}\inf \, u +r^{\a_0} \ri),
\end{array}
\]
where $J_2^r:=(-r,r)^n\ti (10-r^2,10]$. 
Since we may suppose $C_1>1$, noting $Q_{\fr{r}{10}}\subset J_2^r$, we have 
\[
\o\le(10^{-1}r\ri)\leq \low{J^r_2}\sup \, u-\low{J^r_2}\inf \, u\leq 
\g\o(r) +r^{\a_0},\]
where $\g=\fr{C_1-1}{C_1}$. 
Therefore, in view of the standard argument 
(e.g. Lemma 8.23 in \cite{GilTru83}), 
setting $\a=\min\{ -\fr{\log\g}{\log 10},\a_0\}\in (0,1)$, we conclude the proof. 
\end{proof}

%%%%%%%%%%%%%%%%
%%%%%%%%%%%%%%%   4. 2.  Harnack inequality
%%%%%%%%%%%%%%%%

\subsection{Harnack inequality}

In order to prove the Harnack inequality we need the local maximum principle for $L^p$-viscosity subsolutions of 
\begin{equation}\label{4-6}
u_t+\P^-(D^2u)-\mu|Du|-f=0 \quad\mbox{in } Q.
\end{equation}
Following the arguments of \cite{CafCab}, 
we show that the weak Harnack inequality implies the local maximum principle. 
We note that to show Proposition \ref{prop:local}, 
we can apply the arguments of \cite{ImbSil13}, which 
is based on the standard one (e.g. \cite{GilTru83}). 
% with a clever modification. 

In this paper, we present a parabolic version of the method
of \cite{CafCab} (see also \cite{KS5}). 
We first show a blow-up lemma.

%%%%%%%%%%%%%%%    Lemma   4.  3

\begin{lem}\label{L4LMP1}
Let \eqref{Apq1} hold and let $f\in L^p_+(Q)$ and $\mu\in L^q_+(Q)$. 
Assume  that 
\[\| f\|_{L^p(\O)}\leq\e_1,\]
where $\e_1>0$ is the constant in the proof of Theorem \ref{thm:weak}. Suppose
that $v\in C(Q)$ is an $L^p$-viscosity subsolution of 
\eqref{4-6} satisfying 
\begin{equation}\label{LLMP}
|\{ (x,t)\in J_1 \ | \ v(x,t)\geq  s\}|\leq A_0s^{-\b_0}\quad (\forall s\geq 1)
\end{equation}
where $\b_0>0$ and $A_0>1$. % are in Remark \ref{RwHI}. 
Then, there exist $\nu =\nu (n,\L,\l,p,q,\b_0,A_0)>1$, $n_0=n_0(n,\L,\l,p,q,\b_0,A_0)\in\N$ and $\ell_j=\ell_j(n,\L,\l,p,q,\b_0,A_0)\in (0,1)$ for $j\in \N$ such that 
$\sum_{j=1}^\infty \ell_j<\infty$, and if $v$ satisfies
$v(x_0,t_0)\geq \nu^{j-1}$ for some $j\geq n_0$ and $(x_0,t_0)\in \ol J_3$, 
then it follows that 
\[
\low{\hat Q_j}\sup \, v\geq \nu^j,\]
where $J_3=(-\fr12,\fr12)^n\ti (\fr14,\fr12]$ and 
$\hat Q_j=(x_0,t_0)+(-\ell_j,\ell_j)^n\ti (-\fr{\ell_j^2}{10},0]$ (See Figure 4). 
\end{lem}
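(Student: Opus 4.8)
The plan is to argue by contradiction in the spirit of the blow-up (compactness) argument of \cite{CafCab}, adapted to the parabolic, $L^p$-viscosity, unbounded-coefficient setting. Suppose the conclusion fails. Then for every choice of parameters $\nu>1$, $n_0\in\N$ and summable sequence $(\ell_j)$ one can produce a subsolution $v$ of \eqref{4-6} satisfying \eqref{LLMP}, an index $j\geq n_0$, and a point $(x_0,t_0)\in\ol J_3$ with $v(x_0,t_0)\geq\nu^{j-1}$ but $\sup_{\hat Q_j}v<\nu^j$. The idea is to use such a point as the center of a rescaled cylinder, transfer the distributional bound \eqref{LLMP} to the rescaled function, and extract a geometric improvement: if $v$ is already large at $(x_0,t_0)$ but does \emph{not} grow by a factor $\nu$ on the small cylinder $\hat Q_j$, then the set where $v$ is comparable to $v(x_0,t_0)$ must occupy a definite fraction of $\hat Q_j$ (otherwise the weak Harnack / ABP estimate for the supersolution $\nu^j - v$ of an extremal equation forces $v$ to stay small near $(x_0,t_0)$, contradicting $v(x_0,t_0)\geq\nu^{j-1}$ once $\nu$ is large). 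Quantifying ``definite fraction of $\hat Q_j$'' against the global decay \eqref{LLMP} over $J_1$ then pins down how fast $\ell_j$ may be allowed to shrink, and summability of $(\ell_j)$ follows from the exponent $\b_0$.

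Concretely, the key steps in order are: (1) Fix a point $(x_0,t_0)\in\ol J_3$ with $v(x_0,t_0)\geq\nu^{j-1}$ and assume $\sup_{\hat Q_j}v<\nu^j$; consider on a unit cylinder $Q$ the rescaled function $w(x,t):=\nu^{-j}v(x_0+\ell_j x,\ t_0+\ell_j^2(t-10))$ (adjusting the time shift so that the relevant cylinder sits inside $Q$), which is an $L^p$-viscosity subsolution of $w_t+\P^-(D^2w)-\tilde\mu|Dw|-\tilde f=0$ with $\|\tilde\mu\|_{L^q(Q)}\le\|\mu\|_{L^q(Q)}$ (using $q>n+2$) and $\|\tilde f\|_{L^p(Q)}\le\ell_j^{\,2-(n+2)/p}\nu^{-j}\e_1\le\e_1$. (2) Apply the ABP estimate of Proposition \ref{thm:ABP} (equivalently, the weak Harnack inequality Theorem \ref{thm:weak} applied to $\nu^j(1+\text{something}) - v$, or the form of the weak Harnack/oscillation estimate used for H\"older continuity): since $w<1$ on the relevant cylinder but $w\ge\nu^{-1}$ at the center, the measure of $\{w\ge\tfrac12\nu^{-1}\}\cap(\text{a fixed subcylinder})$ must be at least a fixed constant $\sigma_0=\sigma_0(n,\L,\l,p,q,\|\mu\|_{L^q(Q)})>0$ once $\nu$ is chosen large enough that the $\e_1$ and barrier terms are absorbed. (3) Undo the scaling: this yields $|\{(x,t)\in\hat Q_j\ :\ v(x,t)\ge\tfrac12\nu^{j-1}\}|\ge\sigma_0|\hat Q_j|=\sigma_0(2\ell_j)^n\tfrac{\ell_j^2}{10}$. (4) Compare with \eqref{LLMP} applied at level $s=\tfrac12\nu^{j-1}\ge1$ (valid once $j\ge n_0$ with $\nu^{n_0-1}\ge2$): since $\hat Q_j\subset J_1$ for $(x_0,t_0)\in\ol J_3$ and $\ell_j$ small, $\sigma_0 c_n\ell_j^{n+2}\le A_0(\tfrac12\nu^{j-1})^{-\b_0}$, hence $\ell_j\le C(A_0,\sigma_0,n)\,\nu^{-(j-1)\b_0/(n+2)}$. (5) Conversely, choose $\ell_j$ to realize this bound with equality (or a fixed fraction of it); then $\sum_j\ell_j$ converges because it is dominated by a geometric series with ratio $\nu^{-\b_0/(n+2)}<1$, and $n_0$ is fixed by requiring $\hat Q_j\subset J_1$ for all $j\ge n_0$ and $\nu^{n_0-1}\ge2$. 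This closes the contradiction and simultaneously exhibits the required $\nu$, $n_0$, $(\ell_j)$.

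The main obstacle I expect is Step (2): extracting the quantitative measure lower bound ``$v$ is comparable to $\nu^{j-1}$ on a fixed fraction of a subcylinder'' in a way that is uniform in $j$ and that correctly handles the unbounded drift $\mu$ and the inhomogeneous term $f$. The delicate point is that after rescaling the coefficient $\tilde\mu$ does not become small — it is only controlled by $\|\mu\|_{L^q(Q)}$ — so one cannot invoke the restricted Theorem \ref{thm:weak0} directly; instead one must use the general estimate Theorem \ref{thm:weak} (whose constants legitimately depend on $\|\mu\|_{L^q(Q)}$, as recorded in the Remark following it) together with the barrier from Lemma \ref{lem:barrier}. One must also be careful with the geometry: the scaled cylinder associated to $\hat Q_j=(x_0,t_0)+(-\ell_j,\ell_j)^n\times(-\ell_j^2/10,0]$ is a ``past'' cylinder, so after recentering one applies the weak Harnack with $J_1$ playing the role of the lower box and a suitable shifted copy playing the role of $J_2$; checking that the shifted configuration still lies inside $Q=(-10,10)^n\times(0,10]$ for all small $\ell_j$ and all $(x_0,t_0)\in\ol J_3$ is routine but must be done. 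Everything else — the scaling identities for the $L^q$ and $L^p$ norms, the choice of $\nu$ large, and the summation of $(\ell_j)$ — is bookkeeping.
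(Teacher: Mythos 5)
Your proposal takes essentially the same route as the paper: argue by contradiction, rescale so that $(x_0,t_0)$ is the top center of a unit cylinder, apply the weak Harnack decay (Remark~\ref{RwHI}) to a nonnegative affine transform of $v$, transfer the resulting measure bound back, compare it against~\eqref{LLMP}, and thereby constrain $\ell_j$ so that $\sum_j\ell_j$ converges. The high-level mechanism — ``large at the center but not growing nearby forces a measure lower bound on a past sub-box, which \eqref{LLMP} forbids once $\ell_j$ is taken above the critical scale'' — is exactly the paper's mechanism.

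Two quantitative points in your write-up are off, though both are fixable. First, the measure bound you extract lives on the rescaled copy of $J_1$, which after undoing the scaling is the sub-box $\hat C_j=(x_0,t_0)+(-\ell_j,\ell_j)^n\times(-10\ell_j^2,-\tfrac{19}{2}\ell_j^2)$, i.e.\ a ``past'' slice sitting well below $(x_0,t_0)$, \emph{not} $\hat Q_j$ itself (which in rescaled coordinates sits near the top of $J_2$ and about which the weak Harnack says nothing directly). Since $\hat C_j\subset J_1$ as well and has volume $\asymp\ell_j^{n+2}$, your steps (3)--(4) survive after replacing $\hat Q_j$ by $\hat C_j$. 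Second, the claim to ``choose $\nu$ large enough'' is wrong: in the paper $\nu$ is not a free large parameter but is pinned down by the decay data, $\nu=\frac{\alpha}{\alpha-1}$ with $\alpha=2(2A_0)^{1/\beta_0}>2$, so $\nu\in(1,2)$. The point is that the auxiliary supersolution $w^j:=\frac{\nu}{\nu-1}\bigl(1-\nu^{-j}v(x_0+\ell_j\,\cdot,\,t_0+\ell_j^2(\,\cdot-10))\bigr)$ has the normalization $\frac{\nu}{\nu-1}=\alpha$ built in precisely so that $w^j(0,10)\le 1$ when $v(x_0,t_0)\ge\nu^{j-1}$, which triggers Remark~\ref{RwHI}; then evaluating the decay at level $s=\tfrac12\alpha$ gives exactly $A_0(2/\alpha)^{\beta_0}=\tfrac12$. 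Making $\nu$ large would destroy this coupling. With $\nu$ and the level dialed in this way, the paper's explicit choice $\ell_j=2(2^{\beta_0+1}A_0\nu^{-j\beta_0})^{1/(n+2)}$ then sits strictly above the scale forced by \eqref{LLMP}, giving the contradiction; phrased correctly, one takes $\ell_j$ to be a fixed multiple \emph{larger} than the critical scale, not ``equal to or a fraction of'' it.
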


\begin{rem}
The constants $A_0$ and $\b_0$ in Lemma \ref{L4LMP1} will be those in Remark \ref{RwHI}. 
\end{rem}

%%%%%%%%%%%%%%%
%%%%%%%%%%%%%%%     Fig  4
%%%%%%%%%%%%%%%

\begin{picture}(300,150)
\put(50,10){\vector(1,0){300}}%\UTF{00C2}\UTF{0089}\UTF{00C2}\UTF{00A1}\UTF{00C2}\UTF{008E}\UTF{00C2}\UTF{00B2}%
\put(200,0){\vector(0,1){150}}%\UTF{00C2}\UTF{008F}c\UTF{00C2}\UTF{008E}\UTF{00C2}\UTF{00B2}%
\put(100,10){\framebox(200,100)}%Q
\put(100,10){\framebox(200,50)}
\put(150,35){\framebox(100,25)}% L
\put(145,30){\framebox(10,5)}% L1
%\put(153,57){$\cdot$}
\put(120,25){$\hat Q_j$}
\put(132,28){$\rightarrow$}
\put(105,50){$(x_0,t_0)$}
\put(139,38){$\searrow$}
%\put(173,64){\framebox(54,72)}% L2
%\put(119,136){\line(1,0){162}}% L3
%\put(119,136){\line(0,1){140}}
%\put(50,10){\line(0,1){200}}
\put(170,-2){$(0,0)$}
\put(350,0){$x$}
\put(300,0){$1$}
%\put(248,255){$3$}
\put(190,140){$t$}
\put(190,115){$1$}
%\put(
%\put(155,45){$\bullet  (x_0,t_0)$}
\put(220,50){$J_3$}
\put(202,67){$\fr12$}
\put(202,22){$\fr14$}
\put(300,60){$\Bigl\}K_1$} 
\put(270,30){$J_1$}
%\put(205,53){$\leftarrow N_1$}
%\put(210,100){$N_2$}
%\put(230,200){$N_3$}
\put(130,-20){Figure $4$. The cubes $J_3$, $\hat Q_j$.}
\end{picture}

\vspace{1cm}

%%%%%%%%%%%   Proof of Lemma 4. 3. 

\begin{proof} 

We first fix $\nu:=\fr{\a}{\a-1}>1$ (i.e. $\a=\fr{\nu}{\nu-1}$), where 
\[
\a:=2(2A_0)^{\fr{1}{\b_0}}>1.\]

Assume $\sup_{\hat Q_j}v\leq \nu^j$.  
We will arrive at a contradiction provided 
\begin{equation}\label{LMPell}
\ell_j:=2(2^{\b_0+1}A_0\nu^{-j\b_0})^{\fr{1}{n+2}}.
\end{equation}

Choose $j_0\in\N$ such that $\ell_j\leq \fr{1}{2\sqrt{10}}$ 
for $j\geq j_0$. 
For $j\geq j_0$, setting 
\[
w^j(x,t)=\fr{\nu}{\nu-1}\le\{ 1-\nu^{-j}v\le(x_0+
\ell_j
x,t_0+\ell_j^2(t-10) \ri)\ri\}\geq 0\quad 
\mbox{in } Q,
\]
we note that 
\[
\low{J_2}\inf \, w^j\leq w^j(0,10)=\fr{\nu}{\nu-1}\le\{ 1-
\nu^{-j}v(x_0,t_0)\ri\}\leq 1\]
and that $w^j$ is an $L^p$-viscosity supersolution of
\[
w^j_t+\P^+(D^2w^j)+\hat \mu_j |Dw^j|-\hat f_j=0\quad\mbox{in }Q,\]
where $\hat\mu_j (x,t)=\ell_j\mu (x_0+\ell_jx,
t_0+\ell_j^2(t-10)%\fr{\ell_j^2}{10}
)$ and 
$\hat f_j (x,t)=\fr{\ell_j^2}{(\nu-1)\nu^{j-1}}f(x_0+\ell_jx,t_0+\ell_j^2(t-10)%\fr{\ell_j^2}{10}
)$. 
%where $\nu=\fr{2(4A)^{\fr{1}{\e}}}{2(4A)^{\fr{1}{\e}}-1}$. 
Since it follows that 
\[
\|\hat\mu_j\|_{L^q(Q)}=\ell_j^{1-\fr{n+2}{q}}\|\mu\|_{L^q(\hat Q_{j})},\quad\mbox{and}\quad 
\| \hat f_j\|_{L^p(Q)}=\fr{\nu^{1-j}}{\nu-1}\ell_j^{2-\fr{n+2}{p}}\| f\|_{L^p(\hat Q_{j})},\]
there exists  an integer $n_0=n_0(n,\l,\L, p,q,\b_0,A_0)\geq j_0$ such that $\|\hat \mu_j\|_{L^q(Q)}\leq \|\mu\|_{L^q(Q)}$ and $\|\hat f_j\|_{L^p(Q)}\leq%\| f\|_{L^p(Q)}\leq
 \e_1$ for $j\geq n_0$. 

In view of Remark \ref{RwHI}, we thus have 
\[
\le|\le\{ (x,t)\in J_1 \ \le| \ w^j(x,t)\geq \fr12 \a\ri\} \ri.\ri|
\leq A_0\le(\fr{2}{\a}\ri)^{\b_0}=\fr12.
\]
Hence, we have 
\[
\le|\le\{ (y,s) \in \hat C_j \ \le| \ v(y,s)\leq \fr12 \nu^j
\ri.\ri\}\ri|\leq \fr12 \ell_j^{n+2}
,\]
where 
\[\hat C_j=(x_0,t_0)+\le(-\ell_j,\ell_j\ri)^n\ti 
\le(-10\ell_j^2,-\fr{19}{2}\ell_j^2\ri).\]

On the other hand, since $\hat C_j\subset J_1$, by \eqref{LLMP}, we have 
\[
\le|\le\{(y,s) \in \hat C_j \ \le| \ v(y,s)\geq \fr12 \nu^j\ri.\ri\}\ri|\leq A_0\le(2\nu^{-j}\ri)^{\b_0}.\]
Thus, noting 
\[
|\hat C_j|=
2^{n-1}\ell_j^{n+2}
\leq \fr12\ell_j^{n+2}+A_0\le(
2\nu^{-j}\ri)^{\b_0}
,
\]
we have 
\[
\ell_j\leq (2^{\b_0+1}A_0\nu^{-j\b_0})^{\fr{1}{n+2}},
\]
which contradicts \eqref{LMPell}. 
\end{proof}

We can now show the local maximum principle for $L^p$-viscosity subsolutions. 

%%%%%%%%%%%   Proposition  4.  5.  Local maximum principle 

\begin{prop}%Local Maximum Principle
\label{prop:local}
Let $(\ref{Apq1})$ hold and let $f\in L^p_+(Q)$ and $\mu\in L^q_+(Q)$. 
Then, for any $\e_0\in (0,\b_0)$, 
there exists a constant $C_3=C_3(n,\L,\l, p, q, \|\mu\|_{L^q(Q)},\e_0)>0$ 
such that any $L^p$-viscosity subsolution $u\in C(Q)$ of \eqref{4-6}  
satisfies 
\[%\label{equ:2}
\low{J_3}{\sup}\, u\leq C_3\le(
\|u^+\|_{L^{\e_0}(J_1)} +\|f\|_{L^p(Q)}\ri),
\]
where $\b_0>0$ is the constant in Remark \ref{RwHI}. 
%where $J_3:=(-\fr12,\fr12)^n\ti (\fr14,\fr12]$, and $J_1=(-1, 1)^n\ti(0, \fr{1}{2}]$  as in section 3. 
\end{prop}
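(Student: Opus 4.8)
The plan is to use a covering-and-iteration argument in the spirit of \cite{CafCab}, combining the blow-up Lemma \ref{L4LMP1} with the decay estimate \eqref{Decay} from Remark \ref{RwHI}. First, by working with the extremal equation \eqref{4-6} and rescaling, I may normalize so that $\|f\|_{L^p(Q)}\le\e_1$; more precisely, replacing $u$ by $u/(\|u^+\|_{L^{\e_0}(J_1)}+\e_1^{-1}\|f\|_{L^p(Q)}+\eta)$ for $\eta>0$ to be sent to $0$, it suffices to show $\sup_{J_3}u\le C_3$ under the assumptions $\|f\|_{L^p(Q)}\le\e_1$ and $\|u^+\|_{L^{\e_0}(J_1)}\le 1$. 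The key point is that $v:=u^+$ (or rather, $u$ itself viewed through the barrier argument) produces a subsolution to which the weak-Harnack-type decay applies: the hypothesis $\|u^+\|_{L^{\e_0}(J_1)}\le 1$ together with Chebyshev's inequality gives $|\{(x,t)\in J_1 \,|\, u(x,t)\ge s\}|\le s^{-\e_0}$ for $s\ge 1$, but to invoke Lemma \ref{L4LMP1} I need the exponent $\b_0$ rather than $\e_0$. So the real first step is to promote the $L^{\e_0}$ control into the form \eqref{LLMP}.

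To do this I would argue by contradiction. Suppose $\sup_{J_3}u$ is very large, say $u(x_0,t_0)\ge \nu^{j-1}$ for some large $j\ge n_0$ and $(x_0,t_0)\in\ol J_3$. Lemma \ref{L4LMP1} then forces $\sup_{\hat Q_j}u\ge\nu^j$ on a small cylinder $\hat Q_j$ centered at $(x_0,t_0)$ with half-width $\ell_j$. Picking a new point in $\hat Q_j$ where $u\ge\nu^j$ and iterating, one generates a sequence of points $(x_j,t_j)$ with $u(x_j,t_j)\ge\nu^j\to\infty$ and $|x_{j+1}-x_j|\le\ell_j$, $|t_{j+1}-t_j|\le\ell_j^2/10$. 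Since $\sum_j\ell_j<\infty$ and $\sum_j\ell_j^2<\infty$, these points stay in a fixed compact subcylinder (one checks they do not leave $J_1$, using that $\ell_j\le 1/(2\sqrt{10})$ for $j\ge j_0$ and the geometry of $J_3\subset K_1$), and in fact converge to a limit point. But $u$ is continuous on $Q$, hence bounded on compact subsets, contradicting $u(x_j,t_j)\to\infty$. This shows $\sup_{J_3}u<\nu^{n_0}$, i.e.\ it is bounded by a constant depending only on $n,\L,\l,p,q,\|\mu\|_{L^q(Q)},\e_0$ — which, after undoing the normalization, is exactly the asserted estimate.

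The step where \eqref{LLMP} is needed deserves care: Lemma \ref{L4LMP1} requires the distribution estimate with the specific exponent $\b_0$ from Remark \ref{RwHI}, and a priori we only know $\|u^+\|_{L^{\e_0}(J_1)}\le 1$ with $\e_0<\b_0$. The resolution is that we do not actually need \eqref{Decay} to hold for $u$ itself a priori; rather, I would run the argument directly: assume $\sup_{J_3}u$ is large, and observe that the rescaled functions $w^j$ appearing in the proof of Lemma \ref{L4LMP1} are nonnegative supersolutions of \eqref{P+} with the required smallness of $f$, so Remark \ref{RwHI} (not the hypothesis on $u$) supplies the needed measure estimate $|\{w^j\ge\a/2\}\cap J_1|\le 1/2$. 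Thus the role of the $L^{\e_0}$ norm of $u^+$ is only to anchor the contradiction: it guarantees $u$ is not identically large, so the iteration cannot even start from the top, and more carefully one uses it to control the measure of the superlevel sets at the base scale. Concretely, one shows that if $\sup_{J_3}u\ge\nu^{n_0}$ then the blow-up lemma applied repeatedly yields a superlevel set in $J_1$ of measure exceeding $A_0s^{-\b_0}$ for suitable $s$, and since $\|u^+\|_{L^{\e_0}(J_1)}\le 1$ bounds all superlevel-set measures via $|\{u\ge s\}|\le s^{-\e_0}$, choosing $s$ appropriately (large) gives the contradiction because $s^{-\e_0}$ decays slower but the blow-up produces a quantitatively large set at a \emph{fixed} moderate level.

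The main obstacle, and the part requiring the most bookkeeping, is reconciling the two exponents $\e_0$ and $\b_0$ and tracking the geometry of the iterated cylinders $\hat Q_j$ — ensuring they remain inside $J_1$ (so that Lemma \ref{L4LMP1}'s hypothesis \eqref{LLMP}, or its substitute via Remark \ref{RwHI}, stays applicable) and that the summability $\sum\ell_j<\infty$ with $\ell_j=2(2^{\b_0+1}A_0\nu^{-j\b_0})^{1/(n+2)}$ indeed holds, which it does since $\nu>1$ makes $\ell_j$ decay geometrically. The scaling of the $f$-term through the chain of substitutions (each carrying a factor $\ell_j^{2-(n+2)/p}$ and $\nu^{1-j}$) must be checked to keep $\|\hat f_j\|_{L^p(Q)}\le\e_1$ for all large $j$ simultaneously, which is exactly what the choice of $n_0$ in Lemma \ref{L4LMP1} achieves; I would simply cite that. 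Everything else is the standard Caffarelli–Cabr\'e maximum-principle mechanism adapted to the parabolic cylinders, and I expect no essential new difficulty beyond careful constants.
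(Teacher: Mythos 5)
Your proposal follows the same route as the paper's proof in its essentials: normalize so that $\|f\|_{L^p(Q)}\le\e_1$ and $\|u^+\|_{L^{\e_0}(J_1)}\le 1$, use Chebyshev to obtain a superlevel-set decay on $J_1$, invoke the blow-up Lemma \ref{L4LMP1}, iterate to produce points $(y_j,s_j)$ staying inside a fixed compact subcylinder (using $\sum_{j\ge n_1}\ell_j\le 1/4$) where $v(y_j,s_j)\to\infty$, and contradict continuity. That is exactly the paper's mechanism.

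The one place where you over-complicate things is your third paragraph, on reconciling $\e_0$ with $\b_0$. There is no obstacle: Lemma \ref{L4LMP1} is stated with $\b_0>0$ and $A_0>1$ as \emph{free} parameters in the hypothesis \eqref{LLMP}, and its conclusion supplies $\nu,n_0,\ell_j$ depending on them. The paper's proof simply invokes the lemma with $\b_0$ \emph{replaced by} $\e_0$ (the normalization constant $N_0$ is chosen so that Chebyshev yields precisely $|\{v\ge s\}\cap J_1|\le A_0 s^{-\e_0}$ with the same $A_0$ as in Remark \ref{RwHI}). Your worry that the lemma's proof internally cites Remark \ref{RwHI} with a different exponent is handled inside the lemma: since $\e_0<\b_0$ and $A_0>1$, the choice $\a=2(2A_0)^{1/\e_0}$ is \emph{larger} than $2(2A_0)^{1/\b_0}$, so the bound $A_0(2/\a)^{\b_0}\le 1/2$ used for the rescaled supersolutions $w^j$ still holds; no "promotion" of the $L^{\e_0}$ control and no inline replication of the lemma's proof is needed. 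Also, the measure-theoretic contradiction you sketch at the end of that paragraph (a superlevel set exceeding $A_0 s^{-\b_0}$) is not part of the actual argument and would need separate justification; the correct contradiction is the pointwise one you already describe in your second paragraph, from continuity of $v\in C(Q)$ on the compact set where the iterated points accumulate.
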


\begin{proof}
Choose $(y_0,s_0)\in \ol J_3$ such that
\[
\low{J_3}\sup \, u=u(y_0,s_0).\]

%For $r\in (0,\fr{1}{\sqrt{20}})$, 
Setting 
\[N_0=\le(A_0^{-1}\int_{J_1}  (u^+)^{\e_0}\,dxdt\ri)^{\fr{1}{\e_0}}%(y+r\cdot,s-10r^2+r^2\cdot )\|_{L^\e (J_1)
+2\e_1^{-1}\|  f\|_{L^p(Q)},\]
where $\e_1>0$ is from the proof of Theorem \ref{thm:weak}, 
we observe that 
$v:=N_0^{-1}u$ is an $L^p$-viscosity subsolution of 
\[
v_t+\P^-(D^2v)-\mu |Dv|-\fr{1}{N_0} f=0\quad \mbox{in }Q.\]

We note that for $s\geq 1$, we have
\[
|\{ (x,t)\in J_1 \ | \ v(x,t)\geq s\} |\leq \dis\fr{1}{s^{\e_0}}\int_{J_1}v^{\e_0} \,dxdt
\leq A_0 s^{-\e_0}.
\]

Let  $\nu>1$, $n_0\in\N$ and $\ell_j>0$ be the constants in Lemma \ref{L4LMP1} when $\b_0=\e_0$. 
There exists $n_1\geq n_0$ such that 
\[
\sum_{j=n_1}^\infty\ell_j\leq \fr14.\]
Now, suppose that there is $(y_0,s_0)\in \ol J_3$ such that 
\[
v(y_0,s_0)\geq \nu^{n_1-1}.\]
In view of Lemma \ref{L4LMP1}, for $j\in\N$, 
we can find $(y_j,s_j)\in (y_{j-1},s_{j-1})+[-\ell_{j+n_1-1},\ell_{j+n_1-1}]^n 
\ti [-\fr{\ell^2_{j+n_1-1}}{10},0]$ 
%Q_{\ell_{n_1+j-1}}(y_{j-1},s_{j-1})$
 such that 
\[
v(y_j,s_j)\geq \nu^{n_1+j-1}.\]
Because $(y_j,s_j)\in [-\fr34,\fr34]^n\ti [\fr18,\fr12]$, this contradicts 
that $v\in C(Q)$. 
Therefore, we conclude the proof. 
\end{proof}

%%%%%%%%%%%%%%%%%\UTF{00C2}\UTF{0081}@\UTF{00C2}\UTF{0081}@Harnack Inequality
Using the weak Harnack inequality, together with Proposition 
\ref{prop:local}, we can obtain the Harnack inequality which we state without proof.

\begin{cor}%Harnack Inequality
\label{thm:harnack}
Let $(\ref{Apq1})$ hold and let $f\in L^p(Q)$ and $\mu\in L^q(Q)$.
There is a constant $C_4=C_4(n, \L,\l,  p, q,\|\mu\|_{L^q(Q)})>0$ such that any nonnegative $L^p$-viscosity solution $u\in C(\ol{Q})$ of $(\ref{4-1})$ satisfies
\[
%\label{eq:harnack}
\low{J_3}{\sup}\, u\leq C_4\le(\low{J_2}{\inf}\,u +\|f\|_{L^p(Q)}\ri), 
\]
where $J_3=(-\fr12,\fr12)^n\ti (\fr14,\fr12]$ and 
$J_2=(-1, 1)^n\ti(9, 10]$. 
\end{cor}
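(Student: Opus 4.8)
The Harnack inequality will follow by combining the weak Harnack inequality (Theorem~\ref{thm:weak}) with the local maximum principle (Proposition~\ref{prop:local}), which is the classical Krylov--Safonov route.

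First I would reduce to the extremal equations. Since \eqref{4-4} holds with $\mu$ replaced by $\mu^+:=\max(\mu,0)$ (whose $L^q(Q)$-norm is no larger), we may assume $\mu\geq 0$. By the remark following \eqref{4-5}, a nonnegative $L^p$-viscosity solution $u$ of \eqref{4-1} is simultaneously a nonnegative $L^p$-viscosity supersolution of $u_t+\P^+(D^2u)+\mu|Du|-f=0$ in $Q$ and an $L^p$-viscosity subsolution of $u_t+\P^-(D^2u)-\mu|Du|-f=0$ in $Q$. In the supersolution inequality one has (in the viscosity sense) $u_t+\P^+(D^2u)+\mu|Du|\geq f\geq -f^-$, so $u$ is a nonnegative $L^p$-viscosity supersolution of \eqref{P+} with $f$ replaced by $f^-\in L^p_+(Q)$; in the subsolution inequality one has $u_t+\P^-(D^2u)-\mu|Du|\leq f\leq f^+$, so $u$ is an $L^p$-viscosity subsolution of \eqref{4-6} with $f$ replaced by $f^+\in L^p_+(Q)$. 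As $\|f^\pm\|_{L^p(Q)}\leq\|f\|_{L^p(Q)}$ and $u^+=u$, this places us in the exact hypotheses of Theorem~\ref{thm:weak} and of Proposition~\ref{prop:local}.

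Next, Theorem~\ref{thm:weak} furnishes constants $\e_0,C_0>0$ depending only on $n,\L,\l,p,q,\|\mu\|_{L^q(Q)}$, which we may take with $\e_0\in(0,\b_0)$, $\b_0$ being the exponent of Remark~\ref{RwHI} (the $\e_0$ built in the proof of Theorem~\ref{thm:weak0} already satisfies this, and shrinking it further only changes $\|u\|_{L^{\e_0}(J_1)}$ by a factor depending on $|J_1|$), such that
\[
\left(\int_{J_1}u^{\e_0}\,dxdt\right)^{1/\e_0}\leq C_0\left(\inf_{J_2}u+\|f\|_{L^p(Q)}\right).
\]
With this same $\e_0$, Proposition~\ref{prop:local} applied to $u$ as an $L^p$-viscosity subsolution of \eqref{4-6} gives a constant $C_3$ with
\[
\sup_{J_3}u\leq C_3\left(\left(\int_{J_1}u^{\e_0}\,dxdt\right)^{1/\e_0}+\|f\|_{L^p(Q)}\right).
\]
Combining the two displays yields $\sup_{J_3}u\leq C_3(C_0+1)\left(\inf_{J_2}u+\|f\|_{L^p(Q)}\right)$, i.e.\ the assertion with $C_4=C_3(C_0+1)$.

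The part requiring the most care, rather than any new idea, is the bookkeeping of exponents and constants: one must check that the $\e_0$ delivered by the weak Harnack inequality lies in the admissible range $(0,\b_0)$ of the local maximum principle, and that the constants $A_0,\b_0$ entering Proposition~\ref{prop:local} (via Lemma~\ref{L4LMP1} and Remark~\ref{RwHI}) are exactly those attached to the same extremal equation for $u$. Once that is in place, the reduction to extremal equations and the algebraic chaining above are routine.
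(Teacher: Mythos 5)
Your proposal is correct and follows precisely the route the paper indicates (``Using the weak Harnack inequality, together with Proposition~\ref{prop:local}, we can obtain the Harnack inequality''): reduce to the extremal inequalities with $\mu^+$ and $f^\pm$, apply Theorem~\ref{thm:weak} for the $L^{\e_0}$ bound by $\inf_{J_2}u+\|f\|_{L^p(Q)}$, apply Proposition~\ref{prop:local} for the $\sup_{J_3}u$ bound by the $L^{\e_0}$ norm, and chain. The check that $\e_0$ coming from Theorem~\ref{thm:weak} lies in $(0,\b_0)$ with the $\b_0$ of Remark~\ref{RwHI} is exactly the compatibility point worth flagging, and you handle it correctly.
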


%%%%%%%%%%%%%%%%
%%%%%%%%%%%%%%%%         Section  5
%%%%%%%%%%%%%%%%

\section{Remarks on the superlinear growth case}

In this section, we exhibit several properties of $L^p$-viscosity solutions of \eqref{4-1}, where $G$ satisfies \eqref{UP}, \eqref{4-5} and, in place of \eqref{4-4}, 
\begin{equation}\label{EqSuper}
\le\{\begin{array}{c}
\mbox{there are }m>1\mbox{ and }\mu\in L_+^q(Q)\mbox{ for }q>n+2\mbox{ such that}\\
|G(x,t,\xi,O)|\leq \mu (x,t)|\xi|^m\mbox{ for }(x,t)\in Q\mbox{ and }
\xi\in \R^n.
\end{array}\ri.
\end{equation}
More precisely, we present a remark on the ABP maximum principle in 
\cite{KS2}, and an existence result corresponding to that in 
\cite{KS4}, with which we show the weak Harnack inequality for 
$L^p$-viscosity supersolutions of \eqref{4-1} under \eqref{EqSuper}.  

If \eqref{UP}, \eqref{4-5} and \eqref{EqSuper} are satisfied
then if $u\in C(Q)$ is an 
$L^p$-viscosity subsolution (resp., supersolution) of \eqref{4-1}, then 
it is an $L^p$-viscosity subsolution (resp., supersolution) of 
\[
u_t+\P^-(D^2u)-\mu |Du|^m-f=0\quad
\le(\mbox{resp., } \ u_t+\P^+(D^2u)+\mu |Du|^m-f=0\ri)\quad\mbox{in }Q.\]
To establish the ABP maximum principle and the weak Harnack inequality we only need to work with the above extremal inequalities.

%\begin{rem}\label{referee2}

%\end{rem}

%%%%%%%%%%%%%%%%%%
%%%%%%%%%%%%%%%%%%     5. 1.  A remark
%%%%%%%%%%%%%%%%%%

\subsection{A remark on the ABP maximum principle}

In this section, to comply with the setup of \cite{KS2}, $Q=\O\times(0,T]$, where $0<T\leq 1$ and the domain $\O$ satisfies
\begin{equation}\label{eq:Omega1}
\O\subset \{ x\in\R^n \ | \ |x|<1\}.
\end{equation}
We recall the ABP maximum principle from \cite{KS2}. The estimates there seem 
a little complicated. 
However, if we carefully examine them, we can give simple statements as below. 

%%%%%%%%%%%%  Proposition  5. 1.  ABP for superlinear

\begin{prop}\label{propSmall}(Theorems 3.11 and 3.12 of \cite{KS2})
Let \eqref{Apq1} hold with $q<+\infty$. Let \eqref{UP}, \eqref{4-5} and \eqref{EqSuper} be satisfied and let
\begin{equation}\label{Apqm1}
p>\fr{(m-1)q(n+2)}{mq-n-2}.
\end{equation}
There exist $\d =\d (n,\L,\l,m,p,q)>0$  and $C=C(n,\L,\l, m,p,q)>0$ such that 
if $u\in C(\ol Q)$ is an $L^p$-viscosity subsolution of 
\eqref{4-1}, 
and 
\begin{equation}\label{Smallness}
\| f\|_{L^p(Q)}^{m-1}\| \mu\|_{L^q(Q)}\leq\d,
\end{equation}
then 
\[
\under Q\sup \ u\leq \under{\p_pQ}\sup \ u+
   C\| f\|_{L^p(Q)}.
\]
\end{prop}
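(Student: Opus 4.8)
The plan is to reduce the superlinear problem to the linear ABP estimate of Proposition \ref{thm:ABP} by absorbing the gradient term into the inhomogeneous term, using a bootstrap/fixed-point scheme that exploits the smallness condition \eqref{Smallness}. First I would observe that if $u\in C(\ol Q)$ is an $L^p$-viscosity subsolution of $u_t+\P^-(D^2u)-\mu|Du|^m-f=0$, then it is also an $L^p$-viscosity subsolution of the \emph{linear} extremal equation $u_t+\P^-(D^2u)-\tilde f=0$ with $\tilde f:=\mu|Du|^m+f$. The difficulty is that a priori we do not control $\|Du\|$, so the strategy is to obtain an a priori bound on $\|Du\|_{L^{p'}}$ for a suitable exponent $p'$, where the precise choice of $p'$ is dictated by \eqref{Apqm1} via H\"older's inequality: one needs $\mu|Du|^m\in L^p(Q)$, i.e. $\frac1p=\frac1q+\frac{m}{p'}$, which forces $p'=\frac{mpq}{q-p}$, and \eqref{Apqm1} is exactly the condition guaranteeing that such a $p'$ is consistent with the available $W^{2,1}_{p'}$-type gradient estimates.

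The key steps, in order, would be: (1) By an approximation argument (as in the proof of Proposition \ref{prop:exist}, replacing $\mu, f$ by smooth $\mu^j,f^j$ and solving the approximating equations classically), reduce to the case of $L^p$-strong subsolutions with enough regularity to run the estimates; (2) Apply Proposition \ref{thm:ABP} (estimate \eqref{ABP3}) on the set $Q_+[u]$ to the linear extremal inequality to get $\sup_Q u\le \sup_{\p_pQ}u + C_1 d_Q^{2-(n+2)/p}\|\mu|Du|^m+f\|_{L^p(Q_+[u])}$; (3) Combine this with the interior $W^{2,1}$-estimate and the Sobolev-type interpolation inequality \eqref{eq:ASI} to bound $\|Du\|_{L^{p'}}$ by a power of $\|\mu\|_{L^q}\|Du\|_{L^{p'}}^m$ plus $\|f\|_{L^p}$ plus lower-order terms; (4) Set $X:=\|Du\|_{L^{p'}}$ (or a suitable norm of $u$) and derive a scalar inequality of the form $X\le C(\|\mu\|_{L^q}\,X^m + \|f\|_{L^p} + \text{l.o.t.})$; under the smallness hypothesis \eqref{Smallness} (rephrased so that $\|\mu\|_{L^q}(\|f\|_{L^p}+\dots)^{m-1}$ is small), a continuity/connectedness argument in the parameter (e.g. replacing $f$ by $sf$, $s\in[0,1]$) shows that $X$ stays bounded by $C\|f\|_{L^p}$; (5) Feed this bound back into step (2) to conclude $\sup_Q u\le \sup_{\p_pQ}u+C\|f\|_{L^p(Q)}$.

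The main obstacle I expect is step (3)--(4): making the self-improving inequality for $\|Du\|_{L^{p'}}$ rigorous. One has to be careful that the constant $C_1$ in Proposition \ref{thm:ABP} depends on $d_Q^{1-(n+2)/q}\|\mu\|_{L^q(Q)}$, which is controlled here since $\|\mu\|_{L^q}$ is bounded by $\delta$; and that the gradient estimate is applied with the correct exponent $p'$ and on interior subcylinders, which introduces cutoff functions and hence requires tracking how the constants blow up near $\p_pQ$. The role of \eqref{Apqm1} is precisely to guarantee $p'<+\infty$ and compatible with $p'>n+2$-type regularity, so that $|Du|^m$ lands in $L^p$; verifying this algebraic compatibility is routine but must be done. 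The continuation argument in step (4) is the cleanest way to turn the smallness assumption into an a priori bound: since $X(0)=0$ when $f\equiv 0$ (as then $u\equiv 0$ has no room to grow past the boundary, or one argues directly), and $X$ depends continuously on $s$, the bound $X\le C\|f\|_{L^p}$ persists for all $s\in[0,1]$ as long as $\delta$ is small enough that the ``bad'' branch of the scalar inequality is never reached. The reader can find a closely analogous argument in \cite{KS2} (proof of Theorems 3.11 and 3.12) and in \cite{KS4}.
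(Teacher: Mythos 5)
The paper does not actually prove Proposition~\ref{propSmall}; it is stated as a restatement of Theorems~3.11 and~3.12 of \cite{KS2}, which are established there by the iterated comparison function method. Comparing your proposal against that method, there is a genuine gap in your steps (3)--(4): you plan to bound $\|Du\|_{L^{p'}}$ for the subsolution $u$ by interior $W^{2,1}_p$ estimates combined with the interpolation inequality \eqref{eq:ASI}. But an $L^p$-viscosity subsolution of $u_t+\P^-(D^2u)-\mu|Du|^m-f=0$ is merely a continuous function satisfying a one-sided inequality in the viscosity sense; it need not lie in $W^{2,1}_{p,\loc}(Q)$, and the $W^{2,1}_p$ estimates used throughout the paper are for strong \emph{solutions} of extremal equations, not for arbitrary subsolutions (a one-sided bound on $u_t+\P^-(D^2u)$ does not control $D^2u$ from both sides). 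Moreover, the approximation you invoke via Proposition~\ref{prop:exist} constructs strong solutions of a fixed boundary-value problem, not strong approximations of a \emph{given} subsolution $u$, so it does not furnish the regularity you need. Hence the self-improving scalar inequality $X\le C(\|\mu\|_{L^q}X^m+\|f\|_{L^p}+\text{l.o.t.})$ with $X=\|Du\|_{L^{p'}}$ cannot be derived along this route.

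The argument in \cite{KS2} sidesteps this obstruction by bounding gradients of comparison functions rather than of $u$ itself: one iteratively solves $(w_{k+1})_t+\P^+(D^2w_{k+1})=f+\mu|Dw_k|^m$ in $Q$ with $w_{k+1}=0$ on $\p_pQ$, uses global $W^{2,1}_p$ estimates and interpolation (exactly the tools you cite, but applied to the genuine strong solutions $w_k$) to obtain $\|Dw_k\|_{L^{p'}}$ bounds, shows the iteration converges under the smallness condition \eqref{Smallness}, and finally compares $u$ against the limiting barrier using the linear ABP estimate of Proposition~\ref{thm:ABP}. This is where \eqref{Apqm1} enters, exactly as you correctly identified via H\"older's inequality, but its role is to control the comparison functions. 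A secondary flaw in your step (4) is the continuity argument: when $f\equiv 0$ there is no reason for an arbitrary subsolution to have $\|Du\|_{L^{p'}}=0$, so the base case of the continuity method is not justified as stated.
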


%%%%%%%%%%%%   Remark  5. 2.  

\begin{rem} We note that \eqref{Apqm1} is satisfied when $n+2\leq p\leq q, q>n+2$, 
and \eqref{Apqm1} is equivalent to
\[
mq(n+2-p)<(n+2)(q-p),\]
which is (iv) of \eqref{pqm}.
We also remark that when $q=+\infty$, the ABP maximum principle does not require any smallness condition and 
can be found in Theorems 3.7 and 3.8 of \cite{KS2}. Condition \eqref{Apqm1} then reduces to $p>(m-1)(n+2)/m$,
which is the inequality in (i) of \eqref{pqm}.
\end{rem}

We show here that the smallness condition \eqref{Smallness} can be removed, however the estimate becomes more complicated. 

%%%%%%%%%%  Theorem  5. 3.  ABP  without  Smallness

\begin{thm}
\label{z5} 
Let \eqref{Apq1} hold with $q<+\infty$. Let \eqref{UP}, \eqref{4-5}, \eqref{EqSuper} and \eqref{Apqm1} be satisfied.
There exists $C=C(n,\L,\l, m,p,q )>0$ such that 
if $u\in C(\ol Q)$ is an $L^p$-viscosity subsolution of 
\eqref{4-1}, 
then 
$$
\under Q\sup \ u\leq \under{\p_pQ}\sup \ u+C\left(1+\| f\|_{L^p(Q)}^{(m-1)q}\| \mu\|_{L^q(Q)}^q\right)^{\frac{p-1}{p}}\| f\|_{L^p(Q)}.$$
\end{thm}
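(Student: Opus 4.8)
The plan is to remove the smallness assumption \eqref{Smallness} in Proposition \ref{propSmall} by a standard localization-in-time (or rescaling) argument: partition the cylinder $Q$ into finitely many subcylinders on each of which the smallness condition holds, apply Proposition \ref{propSmall} on each piece, and chain the resulting estimates together. Concretely, first I would choose a time step $h>0$ and look at the slabs $Q^{(i)}=\O\times(t_{i-1},t_i]$ with $t_i=ih$. On each slab $Q^{(i)}$ the function $u$, viewed with ``initial data'' $\sup_{\partial_p Q^{(i)}}u$, is still an $L^p$-viscosity subsolution of the same extremal inequality, and $\|f\|_{L^p(Q^{(i)})}^{m-1}\|\mu\|_{L^q(Q^{(i)})}\le \|f\|_{L^p(Q)}^{m-1}\|\mu\|_{L^q(Q)}$. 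That alone does not give smallness, so the real device is to rescale: set $w(x,s)=u(x,t_{i-1}+hs)$ for $s\in(0,1]$, which is an $L^p$-viscosity subsolution on $\O\times(0,1]$ of $w_s+\P^-(D^2w)-\tilde\mu|Dw|^m-\tilde f=0$ with $\tilde\mu(x,s)=h\mu(x,t_{i-1}+hs)$ and $\tilde f(x,s)=hf(x,t_{i-1}+hs)$. Then $\|\tilde f\|_{L^p}^{m-1}\|\tilde\mu\|_{L^q}=h^{(m-1)/p'+1/q'}\,h^{?}\cdots$ — more precisely one gets a positive power of $h$ times $\|f\|^{m-1}_{L^p(Q)}\|\mu\|_{L^q(Q)}$ — so choosing $h$ small (depending only on $n,\L,\l,m,p,q$ and on $\|f\|_{L^p(Q)}^{m-1}\|\mu\|_{L^q(Q)}$) forces \eqref{Smallness} to hold on every rescaled slab.

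Next I would apply Proposition \ref{propSmall} on each rescaled slab to obtain
\[
\sup_{Q^{(i)}}u\le \sup_{\partial_p Q^{(i)}}u + C\|\tilde f\|_{L^p} \le \sup_{\O\times\{t_{i-1}\}}u^+ + \sup_{\partial\O\times(t_{i-1},t_i]}u + C\|f\|_{L^p(Q^{(i)})}
\]
(after undoing the scaling; the scaling of the $\|f\|$ term is harmless since it is a power of $h\le 1$ times a constant). Iterating over $i=1,\dots,\lceil T/h\rceil$, and bounding $\sum_i \|f\|_{L^p(Q^{(i)})}\le N^{1/p'}\|f\|_{L^p(Q)}$ where $N=\lceil T/h\rceil$ is the number of slabs (by Hölder on the counting measure), gives
\[
\sup_Q u \le \sup_{\partial_p Q}u + CN^{1-1/p}\|f\|_{L^p(Q)}.
\]
Finally I would make $N$ explicit: since $h^{c}\|f\|_{L^p(Q)}^{m-1}\|\mu\|_{L^q(Q)}\le\d$ forces $h\ge c'(\|f\|_{L^p(Q)}^{m-1}\|\mu\|_{L^q(Q)})^{-1/c}$, we get $N\lesssim 1+(\|f\|_{L^p(Q)}^{m-1}\|\mu\|_{L^q(Q)})^{1/c}$, and choosing the exponent so that $1/c$ works out (tracking the powers from the scaling of $\tilde\mu,\tilde f$ under $0<T\le1$) produces exactly $N^{1-1/p}\lesssim (1+\|f\|_{L^p(Q)}^{(m-1)q}\|\mu\|_{L^q(Q)}^q)^{(p-1)/p}$, which is the claimed bound.

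The main obstacle — and the step requiring genuine care rather than routine computation — is bookkeeping the scaling exponents so that the final power matches $(1+\|f\|_{L^p(Q)}^{(m-1)q}\|\mu\|_{L^q(Q)}^q)^{(p-1)/p}$ precisely. One must track how $\|\mu\|_{L^q}$ and $\|f\|_{L^p}$ scale under $s\mapsto t_{i-1}+hs$: with $T\le 1$ fixed, only the time variable is dilated by $h$, giving $\|\tilde\mu\|_{L^q(\O\times(0,1])}=h^{1+1/q}\cdots$ — actually $\tilde\mu$ picks up one factor $h$ from $\mu\mapsto h\mu$ and a factor $h^{1/q}$ from $ds$; likewise $\tilde f$ picks up $h^{1+1/p}$. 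So the smallness quantity scales like $h^{(m-1)(1+1/p)+1+1/q}\,\|f\|^{m-1}_{L^p(Q)}\|\mu\|_{L^q(Q)}$, and setting this $=\d$ determines $h$ and hence $N\sim h^{-1}$ as a power of $\|f\|^{m-1}_{L^p(Q)}\|\mu\|_{L^q(Q)}$; raising to the power $1-1/p$ and using the elementary inequality $a^\theta\le 1+a$ (or $(1+a)^\theta\le(1+a)$) must be arranged to land on exponent $(m-1)q$ on $\|f\|$ and $q$ on $\|\mu\|$ inside the bracket. A cleaner alternative, which I would try first, is the single global rescaling $w(x,t)=u(x,\rho t)$, $t\in(0,T/\rho]$, choosing $\rho$ small so that $\|\tilde f\|^{m-1}_{L^p}\|\tilde\mu\|_{L^q}\le\d$ on the whole (now long) cylinder — but since Proposition \ref{propSmall} requires $T\le1$, one is then forced back to slicing the long cylinder into $O(1/\rho)$ unit pieces, which is the same argument. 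Either way, verifying \eqref{Apqm1} is preserved (it does not involve the norms, only $n,\L,\l,m,p,q$) is immediate, so the substance is entirely in the exponent arithmetic.
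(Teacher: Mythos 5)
The core instinct — slice the time interval, apply Proposition \ref{propSmall} on each slab, chain the estimates, and use H\"older on the counting measure to get the $k^{(p-1)/p}$ factor — is exactly right, and the second half of your argument (the chaining and the H\"older step) matches the paper. However, the ``real device'' you introduce, rescaling time on each slab, is both unnecessary and genuinely problematic. A time-only rescaling $w(x,s)=u(x,t_{i-1}+hs)$ turns the extremal inequality $u_t+\P^-_{\lambda,\Lambda}(D^2u)-\mu|Du|^m-f\leq 0$ into $w_s+\P^-_{h\lambda,h\Lambda}(D^2w)-h\mu|Dw|^m-hf\leq 0$: the ellipticity constants degenerate as $h\to 0$, and since the constant in Proposition \ref{propSmall} depends on $\lambda,\Lambda$ (in a way that may blow up as $\lambda\to 0$; cf. the remark after Proposition \ref{prop:p_1}), the resulting constant would depend on $h$ and hence on $\|f\|_{L^p(Q)}^{m-1}\|\mu\|_{L^q(Q)}$, which destroys the claim that $C=C(n,\L,\l,m,p,q)$. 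The alternative parabolic rescaling $(x,t)\mapsto(\sqrt hy,hs)$ keeps the operator but enlarges the spatial domain to $\O/\sqrt h$, again making the ABP constant $h$-dependent through the diameter. Neither route closes.

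The missing observation is that your premise ``that alone does not give smallness'' is false if the partition is chosen \emph{adaptively} rather than with equal width. The paper chooses $0=t_0<\cdots<t_k=T$ so that $\|\mu\|_{L^q(Q_i)}\leq\hat\d:=\d\|f\|_{L^p(Q)}^{1-m}$ on each $Q_i=\O\times[t_{i-1},t_i]$; this is always possible because $t\mapsto\|\mu\|^q_{L^q(\O\times(0,t])}$ is continuous and nondecreasing, and since $\sum_i\|\mu\|^q_{L^q(Q_i)}=\|\mu\|^q_{L^q(Q)}$ the number of slabs is at most $1+\hat\d^{-q}\|\mu\|^q_{L^q(Q)}=1+\d^{-q}\|f\|^{(m-1)q}_{L^p(Q)}\|\mu\|^q_{L^q(Q)}$. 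Then the smallness quantity on each slab is $\|f\|^{m-1}_{L^p(Q_i)}\|\mu\|_{L^q(Q_i)}\leq\|f\|^{m-1}_{L^p(Q)}\hat\d=\d$, so Proposition \ref{propSmall} applies directly with no rescaling and the same $\lambda,\Lambda,\O$. Chaining and H\"older then give exactly the stated exponents — the $q$ inside the bracket and the $(p-1)/p$ outside — with no exponent arithmetic to reconcile. Your calculation with equal slabs and a scaling factor $h$ would, even setting aside the degenerating-ellipticity problem, produce a different inner exponent $1/[(m-1)(1-1/p)+1-1/q]$ rather than $q$, which does not match the theorem.
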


\begin{proof}
By considering $u:=u-\sup_{\p_pQ} u$, we may assume that $\sup_{\p_pQ} u\leq 0$. When \eqref{Smallness} does not hold,  it is easy to see that we can find a partition
$0=t_0<t_1<\cdots<t_k=T$ such that, setting  $Q_i:=\Omega\times[t_{i-1},t_i]$, $i=1,\cdots,k$, and 
$\hat \d:=\| f\|_{L^p(Q)}^{1-m}\d$, 
we have 
\[
\|\mu\|_{L^q(Q_i)}\leq\hat\d\quad\mbox{for } i=1,\cdots ,k, \quad\mbox{where}\,\, k
\leq 1+\delta^{-q}\| f\|_{L^p(Q)}^{(m-1)q}\| \mu\|_{L^q(Q)}^q.
\]
%Notice that we can choose $k=\le[\|\mu\|_{L^q(Q)}\| f\|_{L^p(Q)}^{m-1}\ri]+1$. 
%This can be done for instance if $$k=\left[\frac{\|\mu\|_p\|f\|_p^{m-1}}{\delta}\right]+1.$$
By Proposition \ref{propSmall}, we then have
\[
\under{Q_i}\sup \ u\leq \under{\p_pQ_i}\sup \ u+C\|f\|_{L^p(Q_i)}\quad\mbox{for } i=1,\cdots,k.
\]
Let $(\hat x,\hat t)\in \ol Q_i$ satisfy $\sup_Q u=
u(\hat x,\hat t)$ for some $i\in \{1,\ldots,k\}$. 
Then
\[
\under{Q}\sup \ u\leq \under{\p_pQ_i}\sup \ u+C\| f\|_{L^p(Q_i)}\leq \max(0,\under{Q_{i-1}}\sup \ u)+ C\|f\|_{L^p(Q_i)}.
\]
But
\[
\under{Q_{i-1}}\sup \ u\leq \under{\p_pQ_{i-1}}\sup 
u+C\| f\|_{L^p(Q_{i-1})}\leq \max(0,\under{Q_{i-2}}\sup \ u)+ C\|f\|_{L^p(Q_{i-1})}.\]
Therefore, continuing this procedure, we obtain
\[
\under{Q}\sup \ u\leq C\sum_{i=1}^k \| f\|_{L^p(Q_i)}.
\]
Now
\[
\sum_{i=1}^k \| f\|_{L^p(Q_i)}\leq k^{\frac{p-1}{p}}\| f\|_{L^p(Q)}\leq \left(1+\delta^{-q}\| f\|_{L^p(Q)}^{(m-1)q}\| \mu\|_{L^q(Q)}^q\right)^{\frac{p-1}{p}}\| f\|_{L^p(Q)}.
\]
\end{proof}

\subsection{Existence of strong solutions}

In this subsection, for the sake of simplicity, $\O$ is as in \eqref{eq:Omega1}
and we assume that $\p \O$ is $C^{1,1}$. 
We discuss  the existence of $L^p$-viscosity solutions of parabolic extremal PDE, 
\begin{equation}\label{5Extremal}
u_t+\P^\pm (D^2u)\pm \mu |Du|^m
=f\quad\mbox{in }Q:=\O\ti (0,1],
\end{equation}
where $m>1$, $f\in L^p(Q)$ and $\mu\in L^q(Q)$. 
%Since we use a result from \cite{DKX}, we will suppose $p>n+1$. 
%However, our results below presents an existence of strong solutions while in Proposition 2.4, we only show the existence of %strong sub- or supersolutions. 
Since we do not know a precise proof of $W^{2,1}_p$-estimates 
near $\p_pQ$ of \cite{W}, possibly for $p\leq n+1$, 
(though it was mentioned in \cite{W} without a proof), we will use 
global estimates for $p>n+1$ from \cite{DKX} to show a different type of estimates. 
% based on Aubin-Lions' imbeddings.XXXX CHECK! 
Thus we will assume that $p>n+1$. We first recall a global estimate for $L^p$-strong solutions 
of extremal PDE with no first derivative terms. 

%%%%%%%%%%%%  Proposition  5. 4.  Existence when  \mu = 0

\begin{prop}\label{5GlobalEx}(e.g. Theorem 1.1 of \cite{DKX})
Let $\p\O$ be $C^{1,1}$ and  $p>n+1$. 
Then, there exists a constant $C=C(n,p,\L,\l,{\rm diam}(\Omega),\partial\Omega)>0$ such that for every $f\in L^p(Q)$ and 
$\psi\in W^{2,1}_p(Q)$, there exists a unique $u\in C(\ol Q)\cap
W^{2,1}_p(Q)$ such that 
\[\le\{\begin{array}{rl}
u_t+\P^+(D^2u)=f&\mbox{a.e. in }Q,\\
u=\psi&\mbox{on }\p_pQ,
\end{array}\ri.
\]
and
\[
\| u\|_{W^{2,1}_p (Q)}\leq C\le(\| f\|_{L^p(Q)}+\|\psi\|_{W^{2,1}_p(Q)}\ri)
.\]
\end{prop}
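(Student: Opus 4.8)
The plan is to prove Proposition~\ref{5GlobalEx} by reduction to the global $W^{2,1}_p$ estimates of Dong, Krylov and Li in \cite{DKX} together with a continuity method / fixed point argument. First I would observe that the extremal operator $\P^+(D^2u)$ can be written in Bellman form $\P^+(D^2u)=\sup_{A\in\mathcal A}(-\mathrm{Tr}(AD^2u))$, where $\mathcal A=\{A\in\S^n : \l I\le A\le\L I\}$ is a compact convex set; this is exactly the class of fully nonlinear operators covered by \cite{DKX}, whose Theorem~1.1 gives, for $p>n+1$ and $\p\O\in C^{1,1}$, existence and uniqueness of an $L^p$-strong solution $u\in C(\ol Q)\cap W^{2,1}_p(Q)$ of the Dirichlet--Cauchy problem with the stated a priori bound $\|u\|_{W^{2,1}_p(Q)}\le C(\|f\|_{L^p(Q)}+\|\psi\|_{W^{2,1}_p(Q)})$. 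Thus, if one is willing to cite \cite{DKX} essentially verbatim for the operator $u_t+\P^+(D^2u)$, the proposition is little more than a restatement, and the proof amounts to checking that the hypotheses of \cite{DKX} (uniform parabolicity of the extremal operator, the regularity of $\partial\O$, and the range $p>n+1$) are met.

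If instead one wants a more self-contained argument, I would first reduce to zero parabolic boundary data by setting $v=u-\psi$; then $v$ solves $v_t+\P^+(D^2v+D^2\psi)-\psi_t= f$ with $v=0$ on $\p_pQ$, and since $\P^+$ is Lipschitz in its argument with $\|\P^+(X)-\P^+(Y)\|\le\L\|X-Y\|$ this has the form $v_t+\P^+(D^2v)=\tilde f + (\text{lower-order in } \psi)$, so it suffices to treat the homogeneous boundary case. For the homogeneous case, the key step is the linear solvability with $W^{2,1}_p$ bounds: for fixed $A\in\mathcal A$ the linear equation $v_t-\mathrm{Tr}(AD^2v)=g$, $v=0$ on $\p_pQ$, is uniquely solvable in $C(\ol Q)\cap W^{2,1}_p(Q)$ for $p>n+1$ by classical parabolic $L^p$ theory (e.g. \cite{LSU}), with a bound independent of $A\in\mathcal A$ by compactness of $\mathcal A$. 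One then runs either the method of continuity connecting $\P^+$ to the Laplacian, or a Schauder-type fixed point in $W^{2,1}_p$, using the uniform linear estimate to close the loop; uniqueness follows from the comparison principle (ABP, Proposition~\ref{thm:ABP}) applied to the difference of two solutions.

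The main obstacle is the a priori $W^{2,1}_p$ estimate up to the parabolic boundary for $p$ possibly below $n+2$ (only $p>n+1$ is assumed), which is precisely the delicate content of \cite{DKX} and is not covered by the older interior estimates of \cite{W1,CKS} nor by the boundary estimates sketched in \cite{W} (whose proof the authors explicitly say they do not have for $p\le n+1$). So in practice the honest proof is: verify the structural hypotheses and quote Theorem~1.1 of \cite{DKX} for the operator $u_t+\P^+(D^2u)$, and note that $\P^-$ is handled symmetrically (replace $u$ by $-u$ and $f$ by $-f$, using $\P^-(X)=-\P^+(-X)$). I would also record, for use elsewhere in the paper, that the strong solution so obtained is an $L^p$-viscosity solution of the same equation, which follows from the stability arguments of \cite{CKS} exactly as in the proof of Proposition~\ref{prop:exist} above.
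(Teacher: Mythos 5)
Your proposal is correct and matches the paper: Proposition~\ref{5GlobalEx} is stated without proof, resting entirely on the citation to Theorem~1.1 of \cite{DKX}, which is precisely your primary route. Your additional sketch of a self-contained continuity/fixed-point argument and the remarks on why $p\le n+1$ is delicate are accurate, but the paper itself does nothing beyond the verbatim reference.
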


For the elliptic case, in \cite{KS4}, the existence of $L^p$-strong solutions of 
extremal PDE with superlinear growth in the first derivatives was obtained
assuming that $\|\mu\|_{L^q(Q)}$ is small enough. 
Following the idea of \cite{KS4}, we establish the corresponding 
existence result for $L^p$-strong solutions of \eqref{5Extremal}. 
%Although we need some smallness hypothesis as in the elliptic case, in applications (e.g. the weak Harnack inequality below), we will  avoid this restriction. 
%%%%%%%%%%%%%%%%    Theorem  5. 5.   Existence of Strong solutions 

\begin{thm}\label{thm:ExStrong}
Let $\p\O$ be $C^{1,1}$, $n+1<p\leq q$, $q>n+2$, $f\in L^p(Q)$, $\mu\in L^q(Q)$ and $\psi\in W^{2,1}_p(Q)$. 
Assume that one of the following conditions holds:
\begin{equation}\label{pqm}
\le\{\begin{array}{rl}
(i)&n+1<p<n+2, m(n+2-p)< n+2, q=\infty,\\
(ii)&p\geq n+2, q=\infty,\\
(iii)&n+2<p= q<\infty,\\
(iv)&n+1<p<q, q>n+2, mq(n+2-p)< (n+2)(q-p).
\end{array}
\ri.
\end{equation}
Assume also that
\begin{equation}\label{pqmr}
\le\{
\begin{array}{ll}
r=pm&\mbox{for }(i), (ii)\\
r=\infty&\mbox{for }(iii),\\
r=\fr{mpq}{q-p}&\mbox{for }(iv).
\end{array}
\ri.
\end{equation}
Then, there exists $\d_1=\d_1(n,\L,\l,p,q,m)>0$ such that 
if  
\begin{equation}\label{ExSmall}
\| \mu\|_{L^q(Q)}\le(\| f\|_{L^p(Q)}+\|\psi\|_{W^{2,1}_p(Q)}\ri)^{m-1}
\leq\d_1,
\end{equation}
then there exist 
$L^p$-strong solutions $u\in W^{2,1}_p(Q)$ of
\begin{eqnarray*}%\label{5Dirichlet}
\le\{
\begin{array}{rl}
u_t+\P^\pm (D^2u)\pm\mu |Du|^m=f&a.e.\mbox{ in }Q,\\
u=\psi&\mbox{on }\p_pQ.
\end{array}\ri.
\end{eqnarray*}
Moreover, there exists  $\hat C=\hat C(n,\L,\l,p,q,m,{\rm diam}(\O),\partial\O)>0$ such that
\begin{equation}\label{eq:w2psuperlin}
\| u\|_{W^{2,1}_p(Q)}\leq \hat C\le(\| f\|_{L^p(Q)}+\|\psi\|_{W^{2,1}_p(Q)}\ri).
\end{equation}
\end{thm}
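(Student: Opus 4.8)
The plan is to solve \eqref{5Extremal} by a fixed-point argument in the space $W^{2,1}_p(Q)$, using Proposition \ref{5GlobalEx} to invert the linear extremal operator and treating the superlinear gradient term $\pm\mu|Du|^m$ as a perturbation. Concretely, I would define a map $\mathcal{T}\colon v\mapsto u$, where $u\in W^{2,1}_p(Q)$ is the unique solution (from Proposition \ref{5GlobalEx}) of
\[
\le\{\begin{array}{rl}
u_t+\P^+(D^2u)=f\mp\mu|Dv|^m&\mbox{a.e. in }Q,\\
u=\psi&\mbox{on }\p_pQ,
\end{array}\ri.
\]
and show that $\mathcal{T}$ has a fixed point in a suitable closed ball $\overline{B}_R:=\{v\in W^{2,1}_p(Q)\mid \|v\|_{W^{2,1}_p(Q)}\leq R\}$ with $R=C_0(\|f\|_{L^p(Q)}+\|\psi\|_{W^{2,1}_p(Q)})$ for the constant $C_0$ built from Proposition \ref{5GlobalEx}. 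The case $\P^-$ is handled identically by replacing $u\mapsto -u$.

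The key steps, in order, are as follows. First, I would establish the Sobolev-type embedding $W^{2,1}_p(Q)\hookrightarrow W^{1,0}_r(Q)$ with the exponent $r$ precisely as in \eqref{pqmr} — this is where the conditions \eqref{pqm} enter: they are exactly what guarantees that $Dv\in L^r(Q)$ with $r$ large enough that $|Dv|^m\in L^s(Q)$ for some $s$, and then H\"older's inequality with $\mu\in L^q(Q)$ gives $\mu|Dv|^m\in L^p(Q)$ with the quantitative bound
\[
\|\mu|Dv|^m\|_{L^p(Q)}\leq \|\mu\|_{L^q(Q)}\,\| Dv\|_{L^r(Q)}^m\leq C\|\mu\|_{L^q(Q)}\,\|v\|_{W^{2,1}_p(Q)}^m.
\]
(For the borderline cases one checks $\tfrac1p=\tfrac1q+\tfrac mr$, which is how \eqref{pqmr} is derived.) Second, using this bound together with the linear estimate of Proposition \ref{5GlobalEx}, I get $\|\mathcal{T}v\|_{W^{2,1}_p(Q)}\leq C(\|f\|_{L^p(Q)}+\|\psi\|_{W^{2,1}_p(Q)}+\|\mu\|_{L^q(Q)}R^m)$, and choosing $\d_1$ small in \eqref{ExSmall} makes the right side $\leq R$, so $\mathcal{T}$ maps $\overline{B}_R$ into itself; the resulting bound on the fixed point is \eqref{eq:w2psuperlin}. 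Third, I would show $\mathcal{T}$ is a contraction (or at least compact and continuous, to invoke Schauder) on $\overline{B}_R$ after possibly shrinking $\d_1$: for $v_1,v_2\in\overline{B}_R$ one estimates $\||Dv_1|^m-|Dv_2|^m\|_{L^s}\leq m\big(\|Dv_1\|_{L^r}^{m-1}+\|Dv_2\|_{L^r}^{m-1}\big)\|Dv_1-Dv_2\|_{L^r}$ via the elementary inequality $\big||a|^m-|b|^m\big|\leq m(|a|^{m-1}+|b|^{m-1})|a-b|$, so that $\|\mathcal{T}v_1-\mathcal{T}v_2\|_{W^{2,1}_p}\leq C\|\mu\|_{L^q}R^{m-1}\|v_1-v_2\|_{W^{2,1}_p}\leq \tfrac12\|v_1-v_2\|_{W^{2,1}_p}$ once $\d_1$ is small. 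The Banach fixed-point theorem then yields $u\in\overline{B}_R\subset W^{2,1}_p(Q)$ with $\mathcal{T}u=u$, which is precisely an $L^p$-strong solution of \eqref{5Extremal} (and, by the remark following Proposition \ref{prop:exist}, also an $L^p$-viscosity solution).

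The main obstacle I expect is the bookkeeping of exponents: one must verify that each of the four alternatives in \eqref{pqm} genuinely produces an admissible $r$ in \eqref{pqmr} — that is, that $Dv$ lies in $L^r$ with $r$ finite (or $r=\infty$ in case (iii), using $W^{2,1}_p\hookrightarrow C^{\alpha}$ in the gradient for $p=q>n+2$) and that the H\"older pairing $\tfrac1p\geq \tfrac1q+\tfrac mr$ is satisfied with room to spare so that the embedding constant is finite. The cases $q=\infty$ versus $q<\infty$ need to be separated, and in case (i) the strict inequality $m(n+2-p)<n+2$ is exactly the condition ensuring $r=pm$ is below the critical Sobolev exponent of $W^{2,1}_p$. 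Once these embeddings are pinned down, the fixed-point argument itself is routine and parallels the elliptic argument of \cite{KS4}. A secondary, purely technical point is that Proposition \ref{5GlobalEx} is stated only for $\P^+$; the $\P^-$ equation is reduced to it by the substitution $u\leftrightarrow -u$, $f\leftrightarrow -f$, $\psi\leftrightarrow-\psi$, $\mu\leftrightarrow\mu$, which leaves \eqref{ExSmall} and \eqref{eq:w2psuperlin} unchanged.
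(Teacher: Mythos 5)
Your overall plan—invert the linear Pucci Dirichlet problem via Proposition \ref{5GlobalEx}, treat $\pm\mu|Du|^m$ as a perturbation, track the exponents via the embedding $W^{2,1}_p(Q)\hookrightarrow W^{1,0}_r(Q)$, and close with a fixed-point theorem—is the same strategy the paper uses. Your accounting of the exponents in \eqref{pqm}--\eqref{pqmr} and the bound $\|\mu|Dv|^m\|_{L^p}\le C\|\mu\|_{L^q}\|Dv\|_{L^r}^m$ is also consistent with the paper. However, your proposed way of closing the argument has a genuine gap.

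The Banach fixed-point route requires the estimate $\|\mathcal{T}v_1-\mathcal{T}v_2\|_{W^{2,1}_p}\le C\|\mu(|Dv_1|^m-|Dv_2|^m)\|_{L^p}$. The difference $w:=\mathcal{T}v_1-\mathcal{T}v_2$ satisfies $w_t+\P^+(D^2\mathcal{T}v_1)-\P^+(D^2\mathcal{T}v_2)=h$ with $w=0$ on $\p_pQ$, and the Pucci difference can be written as $-\mathrm{Tr}(A(x,t)D^2w)$ only for a \emph{merely bounded measurable} matrix $A$ with $\l I\le A\le\L I$, not a VMO one. Global $W^{2,1}_p$ estimates for linear parabolic equations with general measurable leading coefficients fail for $p$ away from $n+1$ (the input from Dong--Krylov--Li requires VMO coefficients), so the Lipschitz estimate on $\mathcal{T}$ you invoke is not available, and the contraction argument does not close for the full range $p>n+1$ in the theorem. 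Your fallback ``or at least compact and continuous, to invoke Schauder'' is the right idea, but as you set it up—with the ball $\overline{B}_R\subset W^{2,1}_p(Q)$—it also has a problem: $\mathcal{T}$ is not a compact self-map of a bounded subset of $W^{2,1}_p(Q)$, because the last step (inverting $\p_t+\P^+$) maps $L^p$ into $W^{2,1}_p$ only continuously (and in fact one only gets sequential continuity into the weak topology of $W^{2,1}_p$ by an a priori bound + uniqueness argument), so the image of the ball is not precompact in $W^{2,1}_p$. The paper's fix, which you should adopt, is to take the ball in the \emph{larger} space $W^{1,0}_r(Q)$: the map $K\colon \B_R\subset W^{1,0}_r\to W^{2,1}_p\cap\B_R$ is bounded, the image is precompact in $W^{1,0}_r$ by the compact embedding $W^{2,1}_p\hookrightarrow W^{1,0}_r$ (which the paper establishes as a separate proposition), and continuity of $K$ in $W^{1,0}_r$ follows softly from the a priori $W^{2,1}_p$ bound, compactness, stability of $L^p$-viscosity solutions, and uniqueness—no quantitative difference estimate is needed. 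Schauder on this ball then produces the fixed point, and \eqref{eq:w2psuperlin} comes out of the a priori estimate exactly as you outline.
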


%%%%%%%%%%%%   Remark  5. 6. 

\begin{rem}
We note that  in $(iv)$ of \eqref{pqm}, if $p\geq n+2$, then the third inequality 
 automatically holds. 
\end{rem}

%Moreover, for $T>0$ and a Banach space $X$,  we denote by 
%$$C([0,T];X)=\le\{ v:[0,T]\to X: \mbox{ continuous } \le| \ \under{0\leq t\leq T}\sup \| v(t)\|_X<\infty\ri.\ri\},$$ 
%$$
%L^p(0,T;X)=\le\{ u:(0,T)\to X \ \le| \ \int_0^T\| u(t)\|_X^p dt <\infty\ri.\ri\},$$
%where $\|\cdot\|_X$ denotes the norm of $X$. 
%$L^p(0,T;X)$ the completion of continuous functions $u:(0,T)\to X$ with the norm defined by
%$$\| u \|:=\left(\int_0^T \| u(t)\|_X^pdt\right)^{\frac 1p}.$$

\begin{proof}
We will do the proof only for the case of $\P^+$. For $r$ in \eqref{pqmr}, we define a mapping $K:W^{1,0}_r(Q) \to W^{2,1}_p(Q)$ in the following way.
For $v\in W^{1,0}_r(Q)$, in view of Proposition \ref{5GlobalEx}, 
we find  a unique solution $u:=Kv\in W^{2,1}_p(Q)$ of 
\[
\le\{\begin{array}{rl}
u_t+\P^+(D^2u)+ \mu|Dv|^m=f&\mbox{a.e. in }Q,\\
u=\psi&\mbox{on }\p_pQ.
\end{array}\ri.\]
Since $\| Kv\|_{W^{2,1}_p(Q)}\leq C(\| f\|_{L^p(Q)}+\| \mu|Dv|^m\|_{L^p(Q)}+\| \psi\|_{W^{2,1}_p(Q)})$ holds for some $C>0$, 
noting 
\[
\|\mu|Dv|^m\|_{L^p(Q)}\leq C\|\mu \|_{L^q(Q)}\| Dv\|_{L^r(Q)}^m,
\]
we can argue like in the proof of Theorem 3.1 of \cite{KS4} to find a sufficiently large $\alpha$ and small $\d_1>0$ such that if
$R=\alpha(\| f\|_{L^p(Q)}+\|\psi\|_{W^{2,1}_p(Q)})$, then $K:\B_R\to W^{2,1}_p(Q)\cap \B_R$ is a continuous map when \eqref{ExSmall} holds, where 
\[
\B_R=\le\{ v\in W^{1,0}_r(Q) \ \le| \ \| v\|_{W^{1,0}_r(Q)}\leq R \ri.\ri\}.\]
Since $W^{2,1}_p(Q)$ is compactly imbedded in $W^{1,0}_r(Q)$ (see the next proposition), we conclude the proof by the Schauder fixed point theorem as in \cite{KS4}. 
\end{proof}

%Thanks to Aubin-Lions lemma (see Corollary 8 in \cite{S} for instance), 
%it is known that under \eqref{pqm} and \eqref{pqmr}, we have 
%$$W^{2,1}_p(Q) \Subset W^{1,0}_r(Q)$$
%because $W^{2}_p(\O)\Subset W^{1}_r(\O)\subset L^p(\O)$, 
%where $\Subset $ means the compact imbedding. 

\def\A{{\cal A}}
%%%%%%%%%%%%%%%%%%
%%%%%%%%%%%%%%%%%%11/12TATEYAMA WROTE
%%%%%%%%%%%%%%%%%%

For the reader's convenience, we provide a proof of compact imbeddings of parabolic Sobolev spaces. More general results
for compact imbeddings of anisotropic Sobolev spaces can be found in \cite{BIN1} and \cite{BIN2} (see in particular
Theorem 10.2 of \cite{BIN1} and Theorem 26.3.5 of  \cite{BIN2}).
 %We refer to \cite{LSU}, \cite{BIN1} and \cite{BIN2}.
%%%%%%%%%%%%%%%%%
%%%%%%%%%%%%%%%%%
%%%%%%%%%%%%%%%%%  Compact imbedding theorem 
\begin{prop}%(cf. Theoerm 26.3.5 of \cite{BIN2})
Let $\p\O$ be Lipschitz.  
Assume that $1\leq p\leq r$ satisfy one of the following conditions: 
\begin{equation}\label{prr}
\le\{\begin{array}{rl}
(i)&p<n+2, p\leq r< \fr{p(n+2)}{n+2-p},\\
(ii)&p= n+2\leq r<\infty,\\
(iii)& n+2<p<\infty, r=\infty.
\end{array}
\ri.
\end{equation}%\end{align}
Then, $W^{2, 1}_p(Q)$ is compactly imbedded in 
$W^{1,0}_r(Q)$. 
\end{prop}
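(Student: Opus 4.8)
The statement is a compact embedding result for anisotropic parabolic Sobolev spaces, so the plan is to combine the standard (non-compact) Sobolev embedding for $W^{2,1}_p(Q)$ with a Rellich–Kondrachov type compactness argument, exploiting the extra regularity gained in the $x$-derivatives. First I would recall the continuous anisotropic Sobolev embedding $W^{2,1}_p(Q)\hookrightarrow W^{1,0}_{r^*}(Q)$, where $r^*$ is the critical exponent: from $u,u_t,D^2u\in L^p$ one gets $Du\in L^{r^*}$ with $\frac{1}{r^*}=\frac1p-\frac{1}{n+2}$ in case (i), any finite $r^*$ in case (ii), and $r^*=\infty$ (indeed $Du$ H\"older continuous) in case (iii); these are the embeddings already cited in the Remark after the definition of $L^p$-viscosity solutions (see \cite{LSU}). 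In all three listed cases the target exponent $r$ is strictly subcritical (strictly less than $r^*$, or $r^*=\infty$ with $Du$ continuous), which is exactly the gap one needs to upgrade boundedness to compactness.

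The core of the argument is then the following: let $\{u_k\}$ be a bounded sequence in $W^{2,1}_p(Q)$; I want to extract a subsequence converging in $W^{1,0}_r(Q)$, i.e. with $u_k\to u$ and $Du_k\to Du$ in $L^r(Q)$. Since $Q$ is a bounded Lipschitz cylinder, the set $\{Du_k\}$ is bounded in $W^{1,0}_p(Q)$ (because $D^2u_k$ is bounded in $L^p$), but that alone controls only spatial oscillation; I also need to control oscillation in $t$. For that I use that $\partial_t Du_k = D(\partial_t u_k)$ is bounded in the negative-order space $W^{-1,0}_p$ (one distributional $x$-derivative of an $L^p$ function), and invoke a parabolic Aubin–Lions / Lions–Aubin–Simon compactness lemma: a set bounded in $L^p(0,T;W^1_p(\O))$ with time-derivatives bounded in $L^p(0,T;W^{-1}_p(\O))$ is relatively compact in $L^p(0,T;L^p(\O))=L^p(Q)$. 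Applying this to $\{Du_k\}$ gives a subsequence with $Du_k\to V$ strongly in $L^p(Q)$, and applying it to $\{u_k\}$ (whose $t$-derivatives are bounded in $L^p$, even better) gives $u_k\to u$ strongly in $L^p(Q)$; identifying $V=Du$ is routine. Thus $u_k\to u$ strongly in $W^{1,0}_p(Q)$.

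To pass from strong convergence in $W^{1,0}_p$ to strong convergence in $W^{1,0}_r$ for the subcritical $r$, I interpolate: along the subsequence, $Du_k\to Du$ in $L^p$ and $\{Du_k\}$ is bounded in $L^{r^*}$ (by the critical embedding), so by the interpolation inequality $\|Du_k-Du\|_{L^r}\le \|Du_k-Du\|_{L^p}^{\theta}\,\|Du_k-Du\|_{L^{r^*}}^{1-\theta}$ with $\theta\in(0,1]$ determined by $\frac1r=\frac{\theta}{p}+\frac{1-\theta}{r^*}$ (in case (iii), where $r^*=\infty$, use instead that $\{Du_k\}$ is bounded in $C^{\alpha}$ and equicontinuous, so Arzel\`a–Ascoli gives uniform convergence directly); the right-hand side tends to $0$. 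The same interpolation (or Arzel\`a–Ascoli in case (iii), where $W^{2,1}_p\hookrightarrow C(\overline Q)$ compactly) handles $u_k\to u$ in $L^r$. Hence $u_k\to u$ in $W^{1,0}_r(Q)$.

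The main obstacle is the time-direction compactness: unlike the isotropic case, a bound on $D^2u$ in $L^p$ gives no direct control of the modulus of continuity of $Du$ in $t$, so one genuinely needs the Aubin–Lions mechanism through the negative-order estimate on $\partial_t Du_k$. One must also be a little careful at the endpoints of the exponent ranges in \eqref{prr} — that $r$ is \emph{strictly} below the critical exponent in cases (i)–(ii) (making $\theta>0$) and that case (iii) is handled by the Arzel\`a–Ascoli route rather than interpolation — and to arrange the Lipschitz-domain extension so that the classical lemmas apply. All of this is standard once set up; I would cite \cite{BIN1,BIN2,LSU} for the continuous embeddings and a standard reference for the Aubin–Lions lemma.
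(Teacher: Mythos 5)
Your proof is correct, but it takes a genuinely different — and somewhat heavier — route than the paper's. The paper's argument has two ingredients: (a) the parametric interpolation inequality from Lemma~3.3 of \cite{LSU},
\[
\|u\|_{L^r(Q)}+\|Du\|_{L^r(Q)}\leq C\e^{\a}\bigl(\|D^2u\|_{L^p(Q)}+\|u_t\|_{L^p(Q)}\bigr)+C\e^{-(2-\a)}\|u\|_{L^p(Q)},
\]
which relates the $W^{1,0}_r$-norm to the full $W^{2,1}_p$-seminorm and the \emph{zeroth-order} $L^p$-norm of $u$ with a free small parameter $\e$; and (b) the elementary observation that $W^{2,1}_p(Q)\subset W^{1}_p(Q)$ when $Q$ is regarded as a subset of $\R^{n+1}$ (since $u_t$ and $Du$ are both in $L^p$), so the ordinary Rellich--Kondrachov theorem already gives compactness of $W^{2,1}_p(Q)$ into $L^p(Q)$. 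Applying the inequality to differences $u_k-u_m$ of a sequence bounded in $W^{2,1}_p$ and $L^p$-convergent, one picks $\e$ small to kill the first term and then $k,m$ large to kill the second; this yields Cauchyness in $W^{1,0}_r(Q)$ in one stroke, with no Aubin--Lions and no separate treatment of $Du_k$.

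You instead aim directly for strong $L^p$-compactness of $\{Du_k\}$, and since the $W^{2,1}_p$ bound gives no $L^p$ control on $\p_t Du_k$, you are forced into the Aubin--Lions mechanism via the negative-order estimate on $D(\p_t u_k)$; you then close with the standard $L^p$--$L^{r^*}$ interpolation (or Arzel\`a--Ascoli for case (iii)). This is all fine and does establish the result, but the claim that ``one genuinely needs the Aubin--Lions mechanism'' is not accurate: the obstacle of controlling $Du_k$ in time evaporates if, as in the paper, one uses the $\e$-interpolation inequality against $\|u\|_{L^p}$ rather than $\|Du\|_{L^p}$, because $L^p$-compactness of $u_k$ itself is free. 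In short, your route buys nothing here over the paper's, which is shorter, avoids negative-order spaces entirely, and treats all three exponent regimes uniformly through the single LSU inequality.
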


\begin{proof}
%We first recall the anisotropic imbedding (e.g.  Theorem ?? in \cite{LSU}); there exists $C>0$ such that
%$$
%\| u\|_{W^{1,0}_r(Q)}\leq C\| u\|_{W^{2,1}_p(Q)}\quad \mbox{for }u\in W^{2,1}_p(Q).
%$$
Under assumption \eqref{prr}, by Lemma 3.3 of \cite{LSU}, 
 it follows that there exist $\e'>0$ and $C>0$ such that for any $\e\in(0, \e')$, we have for $u\in W^{2,1}_p(Q)$
\begin{equation}\label{imbedd}
\|u\|_{L^r(Q)}+\|Du\|_{L^r(Q)}\leq C \e^{\alpha}\le(\|D^2u\|_{L^p(Q)}+\|u_t\|_{L^p(Q)}\ri)+C\e^{-(2-\alpha)}\|u\|_{L^p(Q)},\end{equation}
where $\alpha=1-\fr{n+2}{p} +\fr{n+2}{r}>0$ for $r<\infty$, or $\alpha=1-\fr{n+2}{p}>0$ for $r=\infty$. Here, $C$ is independent of $u$ and $\e\in(0, \e')$. (A better inequality is true for
$\|u\|_{L^r(Q)}$ but we do not need it here.)
%Hence, this inequality yields $W^{2, 1}_p(Q)\subset W^{1,0}_r(Q)$ and (\ref{subset}).
	
%Let $\A$ be a bounded subset of $W^{2, 1}_p(Q)$. 
In view of \eqref{imbedd}, it is thus enough to show that a bounded subset of $W^{2, 1}_p(Q)$ is  compact in $L^p(Q)$. 
However this is clear since $W^{2, 1}_p(Q)\subset W^{1}_p(Q)$ (when we consider $Q$ as a subset of $\R^{n+1}$) and the mapping $I:W^{1}_p(Q)\to L^p(Q)$ is compact by the standard compact Sobolev imbedding theorem (see e.g. Theorem 7.26
of \cite{GilTru83}). 
\end{proof}

%%%%%%%%%%%%%%   Remark 5. 8

\begin{rem}
We remark that for case \eqref{prr}-(iii) a stronger result is true, namely that $W^{2, 1}_p(Q)$ is compactly imbedded in the parabolic space 
$C^{1+\alpha}(Q)$ for $\alpha=1-\fr{n+2}{p}$.
\end{rem}

%%%%%%%%%%%%%%%%%%%
%%%%%%%%%%%%%%%%%%%  5. 3.  Weak Harnack
%%%%%%%%%%%%%%%%%%%

\subsection{Weak Harnack inequality}

Using Theorem \ref{thm:ExStrong},  we establish 
the weak Harnack inequality for $L^p$-viscosity supersolutions of uniformly parabolic PDE with 
superlinear growth in $Du$.  
We refer to \cite{KS4} for an analogous elliptic result.

In this subsection, we again set $Q:=(-10,10)^n\ti (0,10]$. 
In what follows, we will utilize the same notation as that in Figure 1.
We will construct a barrier function for \eqref{5Extremal} when $m>1$. 
This will require a slightly more careful analysis than that in the elliptic case.

%%%%%%%%%%%    Lemma  5. 9.   barrier  

\begin{lem}\label{lem:m_barrier}
Assume that \eqref{Apq1} %or \eqref{Apq2} 
holds. 
Then, there exists $\d_2=\d_2(n,\L, \l,q,m)>0$ such that 
if $\mu\in L^q(Q)$ satisfies
\[
\|\mu\|_{L^q(Q)}\leq \d_2,
\]
then there exist $\phi\in W^{2,1}_q(Q)\cap C(\ol Q)$ and $g\in L^q(Q)$ 
such that 
\begin{equation}\label{eq:strong-g}
\le\{\begin{array}{rcll}
\phi_t+\P^+(D^2\phi)+\mu |D\phi|^m&\leq& g&\mbox{a.e. in }Q,\\
\phi&\geq& 2&\mbox{in }K_2,\\
\phi&=&0&\mbox{in }\p_pQ,\\
\mathrm{supp }\ g&\subset&K_1.&\\
\end{array}
\ri.
\end{equation}
\end{lem}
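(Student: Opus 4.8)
The plan is to mimic the construction in Lemma~\ref{lem:barrier} (the linear case $m=1$), but now using the superlinear existence result Theorem~\ref{thm:ExStrong} in place of Proposition~\ref{prop:exist}. First I would pick the same cutoff function $\xi\in C^\infty(\ol Q)$, supported in $\ol K_{1/4}$, with $\xi(x,0)>0$ for $x\in(-\tfrac12,\tfrac12)^n$, and try to solve the extremal equation
\[
\le\{\begin{array}{rl}
\psi_t+\P^+(D^2\psi)+\mu|D\psi|^m=0&\mbox{a.e. in }Q,\\
\psi=\xi&\mbox{on }\p_pQ,
\end{array}\ri.
\]
for an $L^p$-strong (hence $L^q$-strong, since $p=q$ is the relevant choice here) solution $\psi\in W^{2,1}_q(Q)\cap C(\ol Q)$. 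This requires checking that one of the exponent conditions \eqref{pqm} is met for $(p,q)=(q,q)$ with $q>n+2$; case $(iii)$ applies. Theorem~\ref{thm:ExStrong} then gives existence provided the smallness condition \eqref{ExSmall} holds, i.e. provided $\|\mu\|_{L^q(Q)}\|\xi\|_{W^{2,1}_q(Q)}^{m-1}$ is small; since $\xi$ is fixed, this is guaranteed by taking $\|\mu\|_{L^q(Q)}\leq\d_2$ for a suitable $\d_2=\d_2(n,\L,\l,q,m)$. By the comparison/sign considerations (solving with nonnegative boundary data against an extremal operator), $\psi\geq 0$.

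Next I would establish a lower bound $\psi\geq\s>0$ on $K_2$. In the linear case this was done by a chain of applications of the restricted weak Harnack inequality Theorem~\ref{thm:weak0}; here I would instead invoke a restricted weak Harnack inequality for the superlinear equation — which is what the whole of Section~5.3 is building toward, so strictly inside this lemma I should only use what is already available. The cleaner route: since $\psi$ is an $L^q$-strong supersolution of $u_t+\P^+(D^2u)+\mu|Du|^m=0$ and $\|D\psi\|_{L^r(Q)}$ is controlled by \eqref{eq:w2psuperlin} (with $r=\infty$ in case $(iii)$), the term $\mu|D\psi|^m$ lies in $L^q(Q)$ with a norm bounded by $C\|\mu\|_{L^q(Q)}\|D\psi\|_{L^\infty(Q)}^m$, which is small. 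Hence $\psi$ is an $L^q$-viscosity supersolution of the \emph{linear} extremal equation $u_t+\P^+(D^2u)+\tilde f_0=0$ with $\|\tilde f_0\|_{L^q(Q)}$ small, and a strict positivity on $K_2$ follows: if $\psi$ vanished at some $(x_0,t_0)\in\ol K_2$, one runs exactly the iterated-Theorem~\ref{thm:weak0} argument (the ``procedure'' of Figure~3) to force $\psi(\hat x,0)=0$ on a cube where $\xi>0$, a contradiction. Then $\hat M\psi\geq 2$ on $K_2$ for large $\hat M$.

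Finally, I would localize: choose $\eta\in C^\infty(\ol Q)$ nonnegative with $\eta\equiv1$ on $Q\setminus K_1$ and $\eta\equiv0$ on $K_{1/4}$, and set $\phi:=\hat M\eta\psi$. Then $\phi=0$ on $\p_pQ$ (since $\xi$, hence $\psi$, is supported away from $\p_pQ\setminus K_{1/4}$ in space and $\eta=0$ near the relevant part), $\phi\geq 2$ on $K_2$ (where $\eta\equiv1$), and $\phi\in W^{2,1}_q(Q)\cap C(\ol Q)$ by the product rule together with $\psi\in W^{2,1}_q$, $\|D\psi\|_{L^\infty}<\infty$. Computing $\phi_t+\P^+(D^2\phi)+\mu|D\phi|^m$ and using the subadditivity/concavity bounds for $\P^+$, the equation for $\psi$, and $|D\phi|^m\leq C(\eta^m|D\psi|^m+\psi^m|D\eta|^m)$, all the ``bad'' terms are supported in $K_1$ (where $\eta$ is non-constant), so one may take
\[
g=\hat M\bigl[\psi\eta_t+\P^+\!\bigl(\psi D^2\eta+2D\eta\otimes D\psi\bigr)+ C\mu\bigl(\eta^m|D\psi|^m+\psi^m|D\eta|^m - \eta^m|D\psi|^m\bigr)\bigr]+(\text{correction}),
\]
which lies in $L^q(Q)$ with support in $K_1$; the global $W^{2,1}_q$ bound on $\psi$ is exactly what makes $g\in L^q(Q)$. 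The main obstacle I anticipate is the positivity step on $K_2$: handling the superlinear gradient term $\mu|D\psi|^m$ cleanly enough to reduce to the linear weak Harnack inequality of Theorem~\ref{thm:weak0}, and making sure the smallness of $\|\mu\|_{L^q(Q)}$ controls both the existence (via \eqref{ExSmall}) and the size of the effective right-hand side simultaneously — one has to choose $\d_2$ small enough for both purposes and verify the exponent bookkeeping so that $\mu|D\psi|^m\in L^q(Q)$ rather than merely $L^p(Q)$.
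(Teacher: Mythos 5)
Your overall skeleton matches the paper's (cutoff $\eta$, auxiliary solution $\psi$ with boundary data a bump $\xi_0$, positivity of $\psi$ on $K_2$, $\phi:=\eta\psi$, and $g$ collecting the commutator terms), but there is a genuine gap at the very end: with your choice of $\psi$ the commutator $g$ is \emph{not} supported in $K_1$, and the $\hat M$-rescaling step breaks.

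Concretely, on $Q\setminus K_1$ one has $\eta\equiv 1$, so $\phi=\hat M\psi$ and
\[
\phi_t+\P^+(D^2\phi)+\mu|D\phi|^m=\hat M\bigl(\psi_t+\P^+(D^2\psi)\bigr)+\hat M^m\mu|D\psi|^m=(\hat M^m-\hat M)\,\mu|D\psi|^m\geq 0,
\]
with strict inequality wherever $\mu|D\psi|^m>0$. So if $\psi$ solves $\psi_t+\P^+(D^2\psi)+\mu|D\psi|^m=0$ and you then scale by $\hat M>1$ and cut off, the superlinear term produces a nonnegative leftover on all of $Q\setminus K_1$, and you cannot take $g$ supported in $K_1$. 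The ``$-\eta^m|D\psi|^m$'' cancellation you wrote in your formula for $g$, and the unspecified ``(correction)'' term, are exactly where this leftover hides. The same computation shows the issue is already present for $\hat M=1$: one needs $\eta(\psi_t+\P^+(D^2\psi))$ to exactly cancel the $2^{m-1}\eta^m\mu|D\psi|^m$ coming from $|D\phi|^m\leq 2^{m-1}(\eta^m|D\psi|^m+\psi^m|D\eta|^m)$, and this fails when $m>1$ because $2^{m-1}\eta^m\neq\eta$ on $\{\eta=1\}$.

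The paper's fix, which your argument is missing, is to build both the cutoff and the final scale factor into the \emph{coefficient} of the auxiliary superlinear equation. One solves
\[
\psi^0_t+\P^+(D^2\psi^0)+\bigl(4\sigma^{-1}\eta\bigr)^{m-1}\mu\,|D\psi^0|^m=0,\qquad \psi^0=\xi_0\mbox{ on }\p_pQ,
\]
obtains $\psi^0\geq\sigma$ on $K_2$ by a compactness argument (letting $\|\mu\|_{L^q}\to 0$ and passing to the Pucci equation), and then sets $\psi=(2/\sigma)\psi^0$, which satisfies $\psi_t+\P^+(D^2\psi)+(2\eta)^{m-1}\mu|D\psi|^m=0$ and $\psi\geq 2$ on $K_2$. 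With $\phi:=\eta\psi$ the term $\eta(\psi_t+\P^+(D^2\psi))=-2^{m-1}\eta^m\mu|D\psi|^m$ then cancels the bad contribution exactly, and $g$ is genuinely supported in $K_1$. The smallness $\d_2$ must be chosen so that $(4\sigma^{-1})^{m-1}\d_2$ is small enough for both the comparison with the $\mu\to 0$ limit and the existence hypothesis \eqref{ExSmall}. Your alternative route to positivity of $\psi$ on $K_2$ (absorbing $\mu|D\psi|^m$ into a small $L^q$ right-hand side via $\|D\psi\|_{L^\infty}$ and then running Theorem \ref{thm:weak0}) is a legitimate replacement for the paper's compactness step, but it does not repair the support-of-$g$ problem: you must modify the coefficient in the equation for $\psi$ before localizing.
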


\begin{proof}
We first introduce a smooth, nonnegative $\eta:\ol Q\to [0,1]$ satisfying 
\[%\label{5eta}
\le\{\begin{array}{ll}
(i)&\eta (x,t)=1\mbox{ for }(x,t)\in \ol Q\mbox{ if }|x|\geq 1\mbox{ or }t\geq 1,\\
(ii)&\eta (x,0)=0\mbox{ for }|x|\leq \fr12,\\
(iii)&\eta\in W^{2,1}_\infty(Q). 
\end{array}\ri. 
\]
We next choose a nonnegative function $\xi_0\in C^\infty (\R^n\ti [0,\infty))$ such that 
\[%\label{5xi0}
\le\{\begin{array}{cl}
(i)&\xi_0=0\mbox{ in } \R^n\ti [0,\infty)\setminus \{ (x,t)\in \R^n\ti [0,\fr14) \ | \ | x|<\fr12\},\\
(ii)&\xi_0(x,0)>0\mbox{ for }| x|<\fr12.
\end{array}\ri.
\]

As in the proof of Lemma 4.1 in \cite{KS4}, we claim that there exist $\d_2^0>0$ and $\s>0$ such that if 
$\mu'\in L^q(Q)$ satisfies $\|\mu'\|_{L^q(Q)}\leq \d^0_2$, then the strong solution 
  $\psi\in W^{2,1}_q(Q)$ of 
\begin{equation}\label{5barrierpsi}
\le\{\begin{array}{rl}
\psi_t+\P^+(D^2\psi )+\mu'|D\psi|^m=0&\mbox{a.e. in }Q,\\
\psi=\xi_0&\mbox{on }\p_pQ
\end{array}\ri.
\end{equation}
satisfies
\[
\psi\geq \s\quad\mbox{in }K_2.\]
Indeed, otherwise, there are nonnegative $\psi_k\in W^{2,1}_q(Q)\cap C(\ol Q)$ and $\mu_k\in L^q(Q)$ such that $\|\mu_k\|_{L^q(Q)}\leq\fr1k$ and $\psi_k$ is a strong solution of 
\eqref{5barrierpsi} with $\mu'$ replaced by $\mu_k$, such that $\inf_{K_2}\psi_k\leq \fr1k$, then (by \eqref{eq:w2psuperlin}) a subsequence  $\{ \psi_{k_j}\}_{j=1}^\infty $  converges 
uniformly in $\ol Q$ to some $\psi\in W^{2,1}_q(Q)$, and
 $\inf_{K_2} \psi=0$. Since $\psi$ is a strong solution of 
\[
\le\{\begin{array}{rl}
\psi_t+\P^+(D^2\psi)=0&\mbox{a.e. in }Q,\\
\psi=\xi_0&\mbox{on }\p_pQ,
\end{array}\ri.
\]
we can find $(\hat x,\hat t)\in K_2$ such that $\psi(\hat x,\hat t)=0$, 
which gives a contradiction as in the proof of Lemma \ref{lem:barrier}.

We now choose $\delta_2>0$ small enough so that 
\[
(4\sigma^{-1})^{m-1}\delta_2\leq \delta_2^0\quad\mbox{and}\quad (4\sigma^{-1})^{m-1}\delta_2\|\xi_0\|_{W^{2, 1}_p(Q)}^{m-1}\leq \delta_1,
\]
where $\d_1$ is from Theorem \ref{thm:ExStrong}.

In view of Theorem \ref{thm:ExStrong} and the above choice of $\delta_2$, if $\mu\in L^q(Q)$ satisfies $\|\mu\|_{L^q(Q)}\leq \delta_2$, then there exists $\psi^0\in C(\ol Q)\cap W^{2,1}_q(Q)$ 
such that 
\[
\le\{\begin{array}{rl}
\psi^0_t+\P^+(D^2\psi^0)+(4\sigma^{-1}\eta)^{m-1}\mu |D\psi^0|^m=0&\mbox{a.e. in }Q,\\
\psi^0=\xi_0 &\mbox{on }\p_p Q,
\end{array}\ri.\quad\mbox{and}\quad \psi^0\geq \sigma\quad\mbox{in } K_2.
\]
Setting $\psi=(2/\s)\psi^0$ and $\xi=(2/\sigma)\xi_0$, 
we observe that
\[
\le\{\begin{array}{rl}
\psi_t+\P^+(D^2\psi)+(2\eta)^{m-1}\mu |D\psi|^m=0&\mbox{a.e. in }Q,\\
\psi=\xi&\mbox{on }\p_pQ.
\end{array}
\ri.
\]
Furthermore, it is easy to check  that  $\phi:=\eta\psi$ 
satisfies 
\[
\phi_t+\P^+(D^2\phi)+\mu |D\phi|^m\leq g\quad\mbox{a..e. in 
 }Q,
\] 
where $g=\psi\eta_t+2^{m-1}\mu|\psi D\eta|^m+\P^+(
D\eta\otimes D\psi+D\psi \otimes 
D\eta+\psi D^2\eta)$, and $\phi$ and $g$ satisfy all the conditions required in \eqref{eq:strong-g}. 
\end{proof}

We will now show that the weak Harnack inequality holds under a smallness condition. 
Since we separate the weak Harnack inequality from the $L^\infty$-estimate, similarly to Theorem 4.2 in \cite{KS4},
we assume boundedness of 
supersolutions.

%%%%%%%%%%%%%    Theorem  5. 10.    Weak Harnack    m>1  with  Smallness

\begin{thm}\label{thm:m_WH}
Suppose that \eqref{pqm} holds and assume 
\[%\label{pqm_WH}
1<m<2-\fr{n+2}{q}. 
\]
Let $M\geq 0$, $f\in L^p_+(Q)$ and $\mu\in L^q(Q)$. 
Then, there exist $\d_3=\d_3(n,\l,\L,p,q,m,M)>0$, $C=C(n,\l,\L,p,q,m)>0$ and $\e_0=\e_0(n,\l,\L,p,q,m)>0$ such that if 
\[%\label{Small_m}
\|\mu\|_{L^q(Q)}\le(1+\|f\|_{L^p(Q)}^{m-1}\ri)<\d_3,
\]
and $u\in C(\ol Q)$ is an $L^p$-viscosity supersolution of
\[
u_t+\P^+(D^2u)+\mu |Du|^m+f=0\quad\mbox{in }Q
\]
satisfying $0\leq u\leq M$ in $Q$, then 
\[
\le(\int_{J_1}u^{\e_0} \,dxdt\ri)^{\fr{1}{\e_0}}\leq C\le(\under{J_2}\inf \ u+\|f\|_{L^p(Q)}\ri).\]
\end{thm}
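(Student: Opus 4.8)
The plan is to follow the same template as the proof of Theorem~\ref{thm:weak0}, with the linear barrier from Lemma~\ref{lem:barrier0} replaced by the superlinear barrier from Lemma~\ref{lem:m_barrier}. First I would normalize: replacing $u$ by $N_0 u$ with $N_0=(\inf_{J_2}u+\e_1^{-1}\|f\|_{L^p(Q)}+\eta)^{-1}$ for $\eta>0$, it suffices to prove $(\int_{J_1}u^{\e_0}\,dxdt)^{1/\e_0}\leq C$ under the reductions $\inf_{J_2}u\leq 1$ and $\|f\|_{L^p(Q)}\leq\e_1$. A subtlety here, absent in the linear case, is that the barrier in Lemma~\ref{lem:m_barrier} is built for a $\mu$ of small $L^q$-norm, and the normalization rescales the inequality by $N_0^{-1}$: one checks that $u_t+\P^+(D^2u)+N_0^{1-m}\mu|Du|^m+N_0^{-1}f=0$, so since $N_0\leq 1$ (as $\|u\|_{L^\infty}\leq M$ forces $N_0$ bounded below by a constant depending on $M$, and we can take $\eta$ small) the effective coefficient $N_0^{1-m}\mu$ still has controlled $L^q$-norm; this is exactly why boundedness $0\leq u\leq M$ is assumed and why $\d_3$ depends on $M$. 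I would record the precise relation between $\d_3$, $\d_2$ from Lemma~\ref{lem:m_barrier}, and $M$ at this point.

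Next, with $\phi$ the barrier from Lemma~\ref{lem:m_barrier} (for the rescaled coefficient), set $w:=\phi-u$. Since $\phi$ satisfies $\phi_t+\P^+(D^2\phi)+\mu|D\phi|^m\leq g$ a.e.\ and $u$ is a viscosity supersolution of the PDE with the $+\mu|Du|^m$ term, one shows $w$ is an $L^p$-viscosity subsolution of an extremal inequality $w_t+\P^-(D^2w)-\mu|Dw|^m-h=0$ with $h=g-f$; the cancellation of the gradient terms uses convexity/monotonicity of $t\mapsto|t|^m$ together with uniform parabolicity exactly as in the linear case, and I would invoke the remark following \eqref{EqSuper} that a sub/supersolution of \eqref{4-1} is one of the extremal inequality. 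Applying the ABP maximum principle — here the version with superlinear growth, i.e.\ Proposition~\ref{propSmall} (valid because the smallness condition \eqref{Smallness} holds after our choice of $\d_3$), or Theorem~\ref{z5} — on $Q_+[w]$ gives $\sup_Q w\leq C\|h\|_{L^p(Q_+[w])}$. Using $\phi\geq 2$ on $K_2$, $\inf_{J_2}u\leq 1$, $\mathrm{supp}\,g\subset K_1$, and $\|f\|_{L^p(Q)}\leq\e_1$, this yields $1\leq \hat C(\|g\|_{L^p(Q_+[w]\cap K_1)}+\e_1)$, hence a $\theta\in(0,1)$ with $|\{(x,t)\in K_1: w(x,t)>0\}|\geq\theta|K_1|$, i.e.\ $|\{(x,t)\in K_1: u(x,t)\geq M_0\}|\leq(1-\theta)|K_1|$ for $M_0:=\sup_{K_1}\phi\geq 2$. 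This is the analogue of \eqref{eq:basic}.

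From \eqref{eq:basic} onward the argument is verbatim the one in the proof of Theorem~\ref{thm:weak0}: fix $\d\in(1-\theta,1)$ and $m'\in\N$ (I will write $m'$ to avoid clashing with the exponent $m$ in \eqref{EqSuper}) with $1-\theta<(1-\theta)\frac{m'+1}{m'}\leq\d<1$, set up the shrinking cubes $J_1^k$, choose $k_0$, and prove by induction on $k\geq k_0$ the density decay $|\{(x,t)\in J_1^k: u\geq M_0^{km'}\}|\leq\hat C_0\d^k$ via the Calderón--Zygmund covering Lemma~\ref{CalZyg}. The verification of hypothesis $(ii)$ of Lemma~\ref{CalZyg} rests on the claim \eqref{claim}, whose proof requires, at each of the finitely many rescaled steps, reapplying the $K_1$-to-$K_2$ density estimate \eqref{eq:basic} to a rescaled supersolution $v(x,t)=M_0^{1-km'}u(x_0+2^{-j}x,t_0+4^{-j}t)$; here one must check that the rescaled coefficient $\tilde\mu(x,t)=2^{-j}\mu(\cdot)$ has $\|\tilde\mu\|_{L^q(Q)}\leq\|\mu\|_{L^q(Q)}$ (true since $q>n+2$) and, crucially, that the rescaled $|D\tilde u|^m$-coefficient after dividing by $M_0^{km'-1}$ is still admissible — this is where the hypothesis $1<m<2-\frac{n+2}{q}$ enters, ensuring the scaling exponent $2-m-\frac{n+2}{q}$ (or the analogous quantity) has the right sign so that the smallness condition is preserved under the downward rescaling, since $M_0\geq 2>1$. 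I expect this scaling bookkeeping for the superlinear term to be the main obstacle: in the linear case the $|Du|$-coefficient scaled by $2^{-j}$, strictly helping, whereas here one must track the interplay between the $2^{-j}$ spatial scaling, the $4^{-j}$ time scaling, and the $M_0^{1-km'}$ amplitude scaling, and confirm the net effect leaves $\|\text{(rescaled coeff)}\|_{L^q}(1+\|\text{(rescaled }f)\|_{L^p}^{m-1})$ below $\d_3$. Once \eqref{claim} is established, the decay estimate \eqref{Decay}-type bound $|\{(x,t)\in J_1: u\geq s\}|\leq A_0 s^{-\b_0}$ follows with $\b_0=-\log\d/(m'\log M_0)$, and integrating gives the conclusion for any $\e_0\in(0,\b_0)$.
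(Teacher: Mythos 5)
Your overall architecture (superlinear barrier from Lemma~\ref{lem:m_barrier}, ABP, density estimate, Calder\'on--Zygmund iteration) matches the paper's, but there is a concrete gap at the barrier step. You claim that $w:=\phi-u$ is an $L^p$-viscosity subsolution of $w_t+\P^-(D^2w)-\mu|Dw|^m-h=0$ and that the gradient terms cancel ``exactly as in the linear case.'' They do not: if $\phi$ and $u$ carry the \emph{same} coefficient $\mu$, you would need $|Du|^m-|D\phi|^m\leq|Du-D\phi|^m$, which fails for $m>1$ (take $Du=2D\phi\neq 0$: the left side is $(2^m-1)|D\phi|^m>|D\phi|^m$). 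The paper avoids this by inflating the barrier's coefficient: after reducing to $f=0$ and normalizing, it invokes Lemma~\ref{lem:m_barrier} with coefficient $(2m_0)^{m-1}\mu$ against a supersolution with coefficient $m_0^{m-1}\mu$; the inequality $|Dv|^m\leq 2^{m-1}(|D\phi|^m+|Dw|^m)$ then produces a subsolution of the extremal inequality with coefficient $(2m_0)^{m-1}\mu$, to which Theorem~\ref{z5} applies. This $2^{m-1}$-inflation of the barrier coefficient is precisely the superlinear correction your sketch omits, and it must be built into the invocation of Lemma~\ref{lem:m_barrier} and into the size of $\d_3$.

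Two of your side remarks also need correction. You keep $f$ in play via the normalization $N_0$, whereas the paper first eliminates $f$ by solving $w_t+\P^-(D^2w)-2^{m-1}\mu|Dw|^m-f=0$ with zero parabolic boundary data (Theorem~\ref{thm:ExStrong}), bounding $0\leq w\leq C\|f\|_{L^p(Q)}$ via Theorem~\ref{z5}, and replacing $u$ by $v:=u+w$, a supersolution with $f=0$. Both reductions can be made to work, but the paper's is tidier precisely because of the superlinear rescaling headaches you flag later; and in any case your parenthetical ``since $N_0\leq 1$'' is not correct---nothing forces $N_0\leq 1$, and what you actually need (and what $0\leq u\leq M$ delivers) is an \emph{upper} bound on $N_0^{1-m}$. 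Similarly, your explanation that the Calder\'on--Zygmund rescaling works ``since $M_0\geq 2>1$'' has the sign backwards: the amplitude factor $M_0^{(m-1)(km'-1)}$ makes the rescaled coefficient \emph{worse}, and what must save you is the spatial factor $2^{-j(2-m-(n+2)/q)}$ (positive exponent only because $m<2-\fr{n+2}{q}$) combined with the lower bound on $j$ in terms of $k$ coming from the propagation of positivity. You are right that this bookkeeping is the nontrivial point; the paper does not write it out either (it refers to Theorem~4.2 of \cite{KS4}), so if you intend a self-contained proof you must carry this verification through in full.
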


\begin{proof}
The proof follows the arguments of the proof of Theorem 4.2 of \cite{KS4} so we just sketch it. We first reduce to the case of $f=0$. Let $\d_1$ be from Theorem \ref{thm:ExStrong} and let
\[
\|\mu\|_{L^q(Q)}(2\|f\|_{L^p(Q)})^{m-1}\leq \d_1.
\]
%We notice that if \eqref{pqm_WH} holds then \eqref{pqm} is satisfied. 
Let $w\in W^{2,1}_p(Q)$ be from Theorem \ref{thm:ExStrong} such that
\[
\le\{
\begin{array}{rl}
w_t+\P^- (D^2w)-2^{m-1} \mu|Dw|^m-f=0&a.e.\mbox{ in }Q,\\
w=0&\mbox{on }\p_pQ.
\end{array}\ri.
\]
By Theorem \ref{z5}, we have
\begin{equation}\label{eq:estmpw}
0\leq w\leq C\|f\|_{L^p(Q)},
\end{equation}
and it is easy to see that $v:=u+w$ is an $L^p$-viscosity supersolution of
\[
v_t+\P^+ (D^2v)+2^{m-1} \mu|Dv|^m=0\quad \mbox{ in }Q
\]
Thus, if we can prove that
\[
\le(\int_{J_1}v^{\e_0} \,dxdt\ri)^{\fr{1}{\e_0}}\leq C\,\under{J_2}\inf \ v,
\]
the claim will follow using \eqref{eq:estmpw}. Thus we can assume that $f=0$.

We now set $m_0:=\inf_{J_2}u$. 
We may suppose 
$m_0>0$ by adding a positive constant, which will be sent to $0$ in the end. 
Considering $v:=m_0^{-1}u$, we verify that 
$\inf_{J_2}v\leq 1$, and it is an $L^p$-viscosity supersolution of 
\[
v_t+\P^+(D^2v)+m_0^{m-1}\mu |Dv|^m=0\quad\mbox{in }Q.
\]
In view of Lemma \ref{lem:m_barrier}, 
if 
\[(2M)^{m-1}\|\mu\|_{L^q(Q)}\leq \d_2,\]
where $\d_2>0$ is from Lemma \ref{lem:m_barrier}, we can find a strong solution $\phi\in W^{2,1}_q(Q)$ of
\[
\le\{\begin{array}{rcll}
\phi_t+\P^+(D^2\phi)+(2m_0)^{m-1}\mu|D\phi|^m&\leq& g&\mbox{a.e. in }Q,\\\phi&=&0&\mbox{on }\p_pQ,\\
\phi&\geq &2&\mbox{in }K_2,\\
\mbox{supp }g&\subset &K_1.&\\
\end{array}
\ri.
\]
Then $w:=\phi-v$ is an $L^p$-viscosity subsolution of
\[
w_t+\P^-(D^2w)-(2m_0)^{m-1}\mu |Dw|^m-g=0\quad\mbox{in }Q.\]
Hence, Theorem \ref{z5} yields
\[
1\leq \under{J_2}\sup \ w\leq \under{Q}\sup \ w\leq C\,
\| g\|_{L^p(\{ (x,t) \in K_1 \ | \ (\phi-v)(x,t)\geq 0\})}, 
\]
where $C$ is a constant which depends on various absolute constants, $\d_2$, and $\| g\|_{L^p(Q)}$, which is also bounded
by various absolute constants. The above inequality now implies $|\{ (x,t)\in K_1 \ | \ v(x,t)>M\}|\leq (1-\theta) |K_1|$ 
for some $M>1$ and $\theta\in (0,1)$. 
The rest of the proof follows the arguments in the proof of Theorem 4.2 of \cite{KS4}.
\end{proof}

\mbox{ }\\
\noindent
{\bf Acknowledgements.} 
S. Koike is supported in part by Grant-in-Aid for Scientific Research (Nos.
16H06339, %\UTF{00C2}\UTF{008F}\UTF{00C2}\UTF{00C2}\UTF{0081}\UTF{00C3}\UTF{008A}\UTF{00C2}\UTF{0089}\UTF{00C2}\UTF{0092}%
16H03948, %\UTF{00C2}\UTF{0090}\UTF{00C3}\UTF{008E}\UTF{00C2}\UTF{0088}\UTF{00C3}\UTF{20AC}%
16H03946%\UTF{00C2}\UTF{0090}\UTF{00C3}\UTF{0094}\UTF{00C2}\UTF{0096}\UTF{00C3}\UTF{0098}%
) of Japan Society for Promotion Science. 
S. Tateyama is supported by Grant-in-Aid for Japan Society for Promotion Science Research Fellow 16J02399. 

%The authors thank the referees for their careful reading, and helpful comments, which help us to improve the original manuscript. 

\end{document}